\documentclass[reqno]{amsart}
\usepackage{amssymb,latexsym,amsmath,amsthm,enumerate}
\usepackage[mathscr]{eucal}
\usepackage{framed,color,graphicx}
\usepackage{mathrsfs}
\usepackage[all]{xy}
\usepackage{tikz}
\usepackage{cite}

\usetikzlibrary{positioning}
\tikzset{%
element/.style={draw, shape=circle, fill=white, inner sep=1.4pt}
}

\DeclareSymbolFont{bbold}{U}{bbold}{m}{n}
\DeclareSymbolFontAlphabet{\mathbbold}{bbold}

\theoremstyle{plain}
\newtheorem{thm}{Theorem}[section]
\newtheorem{lem}[thm]{Lemma}
\newtheorem{cor}[thm]{Corollary}
\newtheorem{pro}[thm]{Proposition}

\newtheorem{problem}[thm]{Problem}

\newtheorem{claim}{Claim}[section]

\theoremstyle{definition}

\newcommand{\up}[1]{\textup{#1}}

\newcommand{\bp}{\mathbf{p}}
\newcommand{\bq}{\mathbf{q}}
\newcommand{\br}{\mathbf{r}}
\newcommand{\bs}{\mathbf{s}}
\newcommand{\bt}{\mathbf{t}}
\newcommand{\bu}{\mathbf{u}}
\newcommand{\bv}{\mathbf{v}}
\newcommand{\bw}{\mathbf{w}}

\begin{document}

\title[The additively idempotent semirings]
{Two nonfinitely based additively idempotent semirings of order four}

\author{Mengya Yue}
\address{School of Mathematics, Northwest University, Xi'an, 710127, Shaanxi, P.R. China}
\email{myayue@yeah.net}

\author{Miaomiao Ren}
\address{School of Mathematics, Northwest University, Xi'an, 710127, Shaanxi, P.R. China}
\email{miaomiaoren@yeah.net}

\author{Zidong Gao}
\address{School of Mathematics, Northwest University, Xi'an, 710127, Shaanxi, P.R. China}
\email{zidonggao@yeah.net}

\subjclass[2010]{16Y60, 03C05, 08B05, 08B15}
\keywords{Semiring, Variety, Identity, Finite basis problem}
\thanks{Miaomiao Ren, corresponding author, is supported by National Natural Science Foundation of China (12371024, 12571020).
}

\begin{abstract}
We establish two sufficient conditions for an additively idempotent semiring to be nonfinitely based.
As applications,
we prove that two specific $4$-element additively idempotent semirings,
$S_{(4,545)}$ and $S_{(4,634)}$, whose additive reducts are chains,
have no finite basis for their identities.
Furthermore, we show that the interval $[\mathsf{V}(S_{(4,545)}),\mathsf{V}(S_{(4,634)})]$
in the lattice of semiring varieties contains \(2^{\aleph_0}\) distinct varieties.
Consequently, the join of two finitely based additively idempotent semiring varieties is not necessarily finitely based.
Moreover, we obtain the smallest example of a finitely based additively idempotent semiring $S$
whose extension $S^0$ (obtained by adjoining a new element) is nonfinitely based.
\end{abstract}

\maketitle

\section{Introduction}
An \emph{additively idempotent semiring} (or \emph{ai-semiring} for short)
is an algebra $(S, +, \cdot)$ equipped with two binary operations $+$ and $\cdot$ satisfying the following axioms:
\begin{itemize}
\item The additive reduct $(S, +)$ forms a commutative idempotent semigroup;

\item The multiplicative reduct $(S, \cdot)$ forms a semigroup;

\item The distributive laws hold:
\[
x(y+z)\approx xy+xz,~\text{and}~(x+y)z\approx xz+yz.
\]
\end{itemize}
An ai-semiring whose multiplicative reduct is commutative is called a \emph{commutative ai-semiring}.

The class of ai-semirings includes
well-known examples such as the Kleene semiring of regular languages~\cite{con},
the max-plus algebra~\cite{aei}, the power semiring of a semigroup~\cite{dgv24},
the endomorphism semiring of a semilattice~\cite{dgv25},
the semiring of all binary relations on a set \cite{dolinka}, and distributive lattices~\cite{bs}.
These and other similar algebras have significant applications in various fields,
including algebraic geometry~\cite{cc}, tropical algebraic geometry~\cite{ms},
theoretical computer science~\cite{go}, information science~\cite{gl}, and the theory of weighted automata~\cite{dkv}.

Given an ai-semiring $S$, the binary relation $\leq_S$ defined by
\[
a \leq_S b \Leftrightarrow a+b=b,
\]
is a partial order on $S$.
One readily checks that $\leq_S$ is compatible with both addition and multiplication.
For this reason an ai-semiring is often called a \emph{semilattice-ordered semigroup}.
Whenever an order on an ai-semiring is mentioned, it always refer to the order defined above.

A class of ai-semirings is a \emph{variety} if it is closed under taking subalgebras,
homomorphic images, and arbitrary direct products.
By Birkhoff's theorem, a class of ai-semirings is variety if and only if
it is an equational class, that is, the class of all ai-semirings satisfying a certain set of identities.
Let $\mathcal{V}$ be an ai-semiring variety.
If $\Sigma$ is a set of identities that defines $\mathcal{V}$,
then $\Sigma$ is an \emph{equational basis} of $\mathcal{V}$.
The variety $\mathcal{V}$ is \emph{finitely based} if it admits a finite equational basis;
otherwise, it is \emph{nonfinitely based}.

For an ai-semiring $S$, let $\mathsf{V}(S)$ denote the variety generated by $S$,
that is, the smallest variety containing $S$.
Then $S$ and $\mathsf{V}(S)$ satisfy exactly the same identities.
If $\Sigma$ is an equational basis of $\mathsf{V}(S)$, we also say that $\Sigma$ is is an equational basis of $S$.
The ai-semiring $S$ is called finitely based or nonfinitely based according to whether $\mathsf{V}(S)$ is finitely based or not.

The \emph{finite basis problem} for a class of ai-semirings,
one of the central questions in the theory of varieties,
concerns the classification of its members with respect to the finite basis property.
While we focus here on ai-semirings, the finite basis problem for arbitrary algebras
has deep and often surprising connections with
formal languages \cite{Almeida}, the complexity of cellular automata \cite{Gulak} and
classical number-theoretic conjectures \cite{Perkins}.

Over the past two decades, the finite basis problem for ai-semirings has been intensively studied and well developed,
for example, see~\cite{do, dolinka, dgv24, dgv25, gmrz, gpz, gv2501, jrz, pas05, rjzl, rlzc, rlyc, rz16, rzs20, rzw, sr, shap23, Volkov, wrz, yrzs, zrc, zw}.
In particular, the problem for ai-semirings of small order has received special attention in the literature,
as their classification often serves as a testing ground for new methods and supplies crucial examples for the general theory.

Dolinka~\cite{do} found the first example of a nonfinitely based finite ai-semiring, which has $7$ elements.
Dolinka~\cite{dolinka} later presented a $16$-element nonfinitely based ai-semiring.
Volkov~\cite{Volkov} proved that the ai-semiring $B_2^1$ whose multiplicative reduct is a
$6$-element Brandt monoid has no finite equational basis.
Gusev and Volkov~\cite{gv2501} provided another $6$-element nonfinitely based ai-semiring.

Shao and Ren~\cite{sr} established that every ai-semiring in the
variety generated by all ai-semirings of order two is finitely based.
There are, up to isomorphism, $61$ ai-semirings of order three, which are denoted by $S_i$, $1\leq i \leq 61$.
Zhao et al.~\cite{zrc} showed that all ai-semirings of order three are finitely based,
with the sole possible exception of the semiring $S_7$ (its Cayley tables are given in Table~\ref{tb24111401}).
Jackson et al.~\cite{jrz} later provided general sufficient conditions
for a finite ai-semiring to be nonfinitely based,
and applied one such condition to confirm that $S_7$ itself is nonfinitely based.
(We note that $S_7$ can be embedded into $B_2^1$.)
This completes the classification of ai-semirings of order at most three
with respect to the finite basis property.

Moreover, Jackson et al.~\cite{jrz} and Gao et al.~\cite{gmrz, gjrz2}
demonstrated that the nonfinite basis property of $S_7$ transfers to
many finite ai-semirings whose varieties contain it.
This observation led Gao et al.~\cite{gjrz2} to propose the following general problem:
\begin{problem}\label{prob0127}
Is every finite ai-semiring whose variety contains $S_7$ nonfinitely based$?$
\end{problem}

\begin{table}[ht]
\caption{The Cayley tables of $S_7$} \label{tb24111401}
\begin{tabular}{c|ccc}
$+$      &$\infty$&$a$&$1$\\
\hline
$\infty$ &$\infty$&$\infty$&$\infty$\\
$a$      &$\infty$&$a$&$\infty$\\
$1$      &$\infty$&$\infty$&$1$\\
\end{tabular}\qquad
\begin{tabular}{c|ccc}
$\cdot$  &$\infty$&$a$&$1$\\
\hline
$\infty$ &$\infty$&$\infty$&$\infty$\\
$a$      &$\infty$&$\infty$&$a$\\
$1$      &$\infty$&$a$&$1$\\
\end{tabular}
\end{table}

\begin{table}[ht]
\caption{The Cayley tables of $S_{53}$} \label{tb53}
\begin{tabular}{c|ccc}
$+$      &$\infty$&$a$&$1$\\
\hline
$\infty$ &$\infty$&$\infty$&$\infty$\\
$a$      &$\infty$&$a$&$a$\\
$1$      &$\infty$&$a$&$1$\\
\end{tabular}\qquad
\begin{tabular}{c|ccc}
$\cdot$  &$\infty$&$a$&$1$\\
\hline
$\infty$ &$\infty$&$\infty$&$\infty$\\
$a$      &$\infty$&$\infty$&$a$\\
$1$      &$\infty$&$a$&$1$\\
\end{tabular}
\end{table}

Recall that $S_{53}$ denotes another $3$-element ai-semiring whose Cayley tables given in Table~\ref{tb53}.
Although $S_{53}$ and $S_7$ share the same multiplicative reduct, their additive reducts are different.
This structural similarity, however, does not imply any inclusion between the varieties they generate.
In fact, the varieties $\mathsf{V}(S_{53})$ and $\mathsf{V}(S_{7})$ are incomparable (neither is contained in the other),
and they are known to have different finite basis properties.

Up to isomorphism, there are exactly $866$ ai-semirings of order four,
which are denoted by $S_{(4, i)}$, $1\leq i\leq 866$.
These algebras are categorized into five distinct types based on their additive orders,
as illustrated in \cite[Figure 1]{rlzc}. The finite basis problem for ai-semirings in the first three types has
been resolved by Gao et al.~\cite{gmrz}, Ren et al.~\cite{rjzl, rlyc, rlzc}, Shaprynski\v{\i}~\cite{shap23},
Wu et al.~\cite{wrz}, and Yue et al.~\cite{yrzs}.
Recently, members of our group have begun to investigate the problem for the largest of the remaining types,
namely the class $\{S_{(4,i)} \mid 481\leq i\leq 866\}$,
which consists of $386$ ai-semirings of order four whose additive reducts are chains.

\begin{table}[ht]
\caption{The Cayley tables of $S_{(4, 545)}$} \label{tb545}
\begin{tabular}{c|cccc}
$+$      &$\infty$&$a$&$1$&$0$\\
\hline
$\infty$ &$\infty$&$\infty$&$\infty$&$\infty$\\
$a$      &$\infty$&$a$&$a$&$a$\\
$1$      &$\infty$&$a$&$1$&$1$\\
$0$      &$\infty$&$a$&$1$&$0$\\
\end{tabular}\qquad
\begin{tabular}{c|cccc}
$\cdot$  &$\infty$&$a$&$1$&$0$\\
\hline
$\infty$ &$\infty$&$\infty$&$\infty$&$\infty$\\
$a$      &$\infty$&$\infty$&$a$&$a$\\
$1$      &$\infty$&$a$&$1$&$0$\\
$0$       &$\infty$&$a$&$0$&$0$\\
\end{tabular}
\end{table}

\begin{table}[ht]
\caption{The Cayley tables of $S_{(4, 634)}$} \label{tb634}
\begin{tabular}{c|cccc}
$+$      &$\infty$&$a$&$1$&$0$\\
\hline
$\infty$ &$\infty$&$\infty$&$\infty$&$\infty$\\
$a$      &$\infty$&$a$&$a$&$a$\\
$1$      &$\infty$&$a$&$1$&$1$\\
$0$      &$\infty$&$a$&$1$&$0$\\
\end{tabular}\qquad
\begin{tabular}{c|cccc}
$\cdot$  &$\infty$&$a$&$1$&$0$\\
\hline
$\infty$ &$\infty$&$\infty$&$\infty$&$0$\\
$a$      &$\infty$&$\infty$&$a$&$0$\\
$1$      &$\infty$&$a$&$1$&$0$\\
$0$        &$0$&$0$&$0$&$0$\\
\end{tabular}
\end{table}

The present paper concentrates on two algebras from this type: $S_{(4, 545)}$ and $S_{(4, 634)}$,
whose Cayley tables are presented in Tables \ref{tb545} and \ref{tb634}.
While both algebras have the same additive reduct (a $4$-element chain), their multiplicative reducts are distinct.
We shall prove that $\mathsf{V}(S_{(4, 545)})$ is a subvariety of $\mathsf{V}(S_{(4, 634)})$.
Moreover, we demonstrate that $\mathsf{V}(S_{(4, 545)})$ contains both $S_7$ and $S_{53}$.

Even though $S_{(4,545)}$ and $S_{(4,634)}$ are merely $4$-element ai-semirings with commutative multiplication,
which might suggest they are amenable to existing methods,
all known sufficient conditions in the literature for deciding the finite basis property of a finite ai-semiring fail for these two algebras.

We first develop two sufficient conditions for an ai-semiring to be nonfinitely based,
and then apply them to solve the finite basis problem for $S_{(4,545)}$ and $S_{(4,634)}$.
In doing so we provide an explicit infinite equational basis for each of them,
employing syntactic approach together with the compactness theorem of equational logic (see Lemma~\ref{el26012701}).
The main result of this paper is both $S_{(4, 545)}$ and $S_{(4, 634)}$ are nonfinitely based,
which constitutes a contribution to Problem~\ref{prob0127}.

Although substantial progress has been made on the finite basis question for ai-semirings,
the following fundamental problem remains open.
\begin{problem}\label{problem1}
Is the join of two finitely based ai-semiring varieties again finitely based?
\end{problem}

We settle Problem~\ref{problem1} negatively by showing that $\mathsf{V}(S_{(4, 545)})$
is a nonfinitely based variety that can be expressed as
the join of two finitely based varieties.

Let $S$ be an ai-semiring.
The algebra $S^0 = S \cup \{0\}$ obtained from $S$ by adjoining a new element $0$ is again an ai-semiring under the operations
\[
(\forall a\in S\cup \{0\}) \quad a+0=0+a=a,\quad a0=0a=0,
\]
with the original operations of $S$ unchanged.
Thus $S$ is a subsemiring of $S^0$, and the element
$0$ is simultaneously the additive least element and the multiplicative zero of $S^0$.
One can easily check that $S_{(4, 634)}$ is isomorphic to $S^0_{53}$,
and so
\[
\mathsf{V}(S_{(4, 634)})=\mathsf{V}(S^0_{53}).
\]

When $S$ is the trivial ai-semiring, $S^0$ is the $2$-element distributive lattice $D_2$.
By \cite[Proposition 1.4]{wrz}, $D_2$ is contained in every ai-semiring variety of the form $\mathsf{V}(S^0)$.
We shall show that the variety $\mathsf{V}(S_{(4, 545)})$ is exactly the join of $\mathsf{V}(S_{53})$ and $\mathsf{V}(D_{2})$.
Hence $\mathsf{V}(S_{(4, 545)})$ is a subvariety of $\mathsf{V}(S_{(4, 634)})$.

The construction $S^0$ has played a central role in earlier studies of the finite basis problem for ai-semirings,
see, for example,~\cite{gpz, pas05, rz16, rzs20, rzw}.
For an ai-semiring $S$, $[\mathsf{V}(S), \mathsf{V}(S^0)]$ denotes the
the class of all varieties that are contained in $\mathsf{V}(S^0)$ and contains $\mathsf{V}(S)$.
Jackson et al.~\cite[Problem 7.4]{jrz} proposed the following problem:
\begin{problem}\label{prob123}
\hspace*{\fill}
\begin{enumerate}[$(1)$]
\item Is $S_7^0$ finitely based or not finitely based$?$
\item What is the cardinality of the interval $[\mathsf{V}(S_7),\mathsf{V}(S_7^0)]$ in the lattice of semiring varieties$?$
\item Under what conditions is the $($non$)$finite basis property preserved when passing from an ai-semiring $S$ to $S^0$$?$
\end{enumerate}
\end{problem}

Wu et al.~\cite{wrz} proved that $S_7^0$ is nonfinitely based.
Zhao and Wu~\cite{zw} exhibited a $4$-element finitely based ai-semiring $S$ for which the extension $S^0$ is nonfinitely based.
Recently, Gao et al.~\cite{gjrz2} showed that the interval $[\mathsf{V}(S_7),\mathsf{V}(S_7^0)]$ contains $2^{\aleph_0}$ distinct varieties.
The present paper contributes further to Problem~\ref{prob123} by
showing that $S_{53}^0$ is nonfinitely based.
Since $S_{53}$ is a finitely based ai-semiring of order three,
one can deduce that $S_{53}$
is the smallest ai-semiring $S$ (up to order) such that $S$ is finitely based while its extension $S^0$ is not.

Moreover,
Gao et al.~\cite{gjrz2} proposed the following general problem:
\begin{problem}\label{prob1123}
Let $S$ be an ai-semiring. What is the cardinality of the interval $[\mathsf{V}(S),\mathsf{V}(S^0)]$ in the lattice of semiring varieties$?$
\end{problem}

They proved that if $S$ is an ai-semiring such that
$\mathsf{V}(S)$ is contained in $\mathsf{V}(B_2^1)$ and contains $\mathsf{V}(S_7)$,
then the interval $[\mathsf{V}(S), \mathsf{V}(S^0)]$ has cardinality $2^{\aleph_0}$.
Here we show that the interval $[\mathsf{V}(S_{53}), \mathsf{V}(S_{53}^0)]$ likewise contains $2^{\aleph_0}$ distinct varieties.
Note that in the earlier examples, both ends of the interval are nonfinitely based;
in contrast,
our example satisfies that the left end $\mathsf{V}(S_{53})$
is finitely based while the right end $\mathsf{V}(S_{53}^0)$ is nonfinitely based.
This provides a further contribution to Problem~\ref{prob1123}.

\section{Preliminaries}
In this section, we collect some basic notation, definitions, and tools that will be used throughout the paper.
Although every ai-semiring in the subsequent sections is commutative,
the results quoted below are stated for general ai-semirings (whose multiplication need not be commutative).
This formulation keeps them applicable in future work that may not impose commutativity.

Let $X$ be a countably infinite set of variables,
let $X^+$ denote the free semigroup over $X$,
$X^*$ the free monoid over $X$ (with $\varepsilon$ denoting the empty word),
and $X_c^+$ the free commutative semigroup over $X$.

An \emph{ai-semiring term} (or simply a \emph{term}) over $X$ is a finite nonempty set of words in $X^+$.
(In what follows, terms are denoted by bold lowercase letters \(\mathbf{u}, \mathbf{v}, \mathbf{w}, \dots\),
while ordinary lowercase letters \(x, y, z, \dots\) stand for variables.)
We write a term as a formal sum of its elements; that is,
$\bw=\bu_1+\bu_2+\cdots+\bu_n$ means $\bw=\{\bu_1, \bu_2, \ldots, \bu_n\}$.
The order of the summands in the formal sum is irrelevant,
and repeated occurrences of the same word are identified with a single occurrence.
Two terms are equal if and only if their underlying sets coincide.

The collection of all terms over $X$, denoted by $P_f(X^+)$,
forms an ai-semiring under the usual term addition and multiplication.
By \cite[Theorem 2.5]{kp}, $P_f(X^+)$ is free in the variety of all ai-semirings over $X$.
The multiplication of $P_f(X^+)$ is cancellative: for any terms $\bp, \bq$ and $\br$, if $\bp\bq=\bp\br$, then $\bq=\br$;
if $\bq\bp=\br\bp$, then $\bq=\br$.
An \emph{ai-semiring substitution} (or simply a \emph{substitution}) is a semiring homomorphism from $P_f(X^+)$ to itself.

Let $\bu$ and $\bv$ be terms.
We say that \emph{$\bu$ is a subterm of $\bv$}, and write $\bu\leq \bv$,
if there exist terms $\bp_1, \bp_2, \bp_3$ such that
\[
\bv=\bp_1\bu\bp_2+\bp_3.
\]
Here $\bp_1$ or $\bp_2$ may be the empty word (acting as the multiplicative identity),
and $\bp_3$ may be the empty set (acting as the additive identity).
In particular, if $\bp_1$ and $\bp_2$ are both empty, then $\bu$ is called an \emph{additive subterm} of $\bv$;
if $\bp_3$ is empty, then $\bu$ is called a \emph{multiplicative subterm} of $\bv$.
If $\bv$ is a word and $\bu\leq \bv$,
then $\bu$ must be a multiplicative subterm of $\bv$ and is itself a word;
in this case we say that $\bu$ is \emph{subword} of $\bv$.
Moreover, the relation $\leq$ is a partial order on $P_f(X^+)$
and is compatible with multiplication.

The term $\bv$ is called \emph{$\bu$-free} if for every substitution $\varphi$,
the term $\varphi(\bu)$ is not a subterm of $\bv$.
The following statement follows directly from the definitions of subterm and freeness.

\begin{lem}\label{lem26012701}
Let $\bu$, $\bv$ and $\bw$ be terms.
If $\bv$ is $\bu$-free and $\bu$ is a subterm of $\bw$, then $\bv$ is also $\bw$-free.
\end{lem}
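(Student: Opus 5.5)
We argue by contraposition: assume $\bv$ is not $\bw$-free and show that $\bv$ is not $\bu$-free. If $\bv$ fails to be $\bw$-free, there is a substitution $\varphi$ with $\varphi(\bw)\leq \bv$. The plan is to transport the relation $\bu\leq\bw$ through $\varphi$, giving $\varphi(\bu)\leq\varphi(\bw)$. Transitivity of $\leq$, recorded in the preliminaries as a partial order on $P_f(X^+)$, then yields $\varphi(\bu)\leq\bv$. This contradicts the assumption that $\bv$ is $\bu$-free.

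The only step requiring care is that substitutions preserve the subterm relation. Since $\bu\leq\bw$, we can write $\bw=\bp_1\bu\bp_2+\bp_3$, where $\bp_1,\bp_2$ may be empty words and $\bp_3$ may be the empty set. Because $\varphi$ is a semiring homomorphism, we get
\[
\varphi(\bw)=\varphi(\bp_1)\varphi(\bu)\varphi(\bp_2)+\varphi(\bp_3).
\]
The degenerate cases are handled by simply omitting the corresponding factor or summand rather than applying $\varphi$ to an empty object. If $\bp_1$ or $\bp_2$ is empty, that factor is dropped. If $\bp_3$ is empty, then $\bw=\bp_1\bu\bp_2$ and $\varphi(\bw)=\varphi(\bp_1)\varphi(\bu)\varphi(\bp_2)$. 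In every case the displayed identity exhibits $\varphi(\bu)$ as a subterm of $\varphi(\bw)$.

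Transitivity of $\leq$ could also be checked directly rather than cited. From $\bv=\bq_1\varphi(\bw)\bq_2+\bq_3$ we substitute the expression for $\varphi(\bw)$ and distribute, obtaining
\[
\bv=(\bq_1\varphi(\bp_1))\varphi(\bu)(\varphi(\bp_2)\bq_2)+(\bq_1\varphi(\bp_3)\bq_2+\bq_3).
\]
This exhibits $\varphi(\bu)$ as a subterm of $\bv$. No real obstacle is expected; the bookkeeping of the empty cases is the only point needing attention.
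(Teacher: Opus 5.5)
Your proof is correct and is the same direct unwinding of the definitions that the paper intends; the paper gives no written proof and simply says the lemma "follows directly from the definitions of subterm and freeness." Your chain $\varphi(\bu)\leq\varphi(\bw)\leq\bv$, with the check that substitutions preserve $\leq$ and the handling of the empty cases, is exactly the omitted routine verification.
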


An \emph{ai-semiring identity} (or simply an \emph{identity}) is a formal expression of the form
$\bu\approx \bv$, where $\bu$ and $\bv$ are terms.
Let $S$ be an ai-semiring and $\bu\approx \bv$ an identity.
We say that $S$ \emph{satisfies} $\bu\approx \bv$, or that $\bu\approx \bv$ \emph{holds} in $S$,
if $\varphi(\bu)=\varphi(\bv)$ for every semiring homomorphism $\varphi\colon P_f(X^+) \rightarrow S$.

The following result concerns the equational logic of ai-semirings and is from \cite[Lemma 3.1]{dolinka}.
\begin{lem}\label{lem02}
Let $\Sigma$ be a set of identities and let $\bu\approx \bv$ be a nontrivial identity.
Then $\bu\approx \bv$ is derivable from $\Sigma$ if and only if there
exist terms $\bt_1, \bt_2, \dots, \bt_n\in P_f(X^+)$ such that $\bu=\bt_1$, $\bv=\bt_n$ and,
for each $1\leq i<n$, there are terms $\bp_i, \bq_i, \br_i, \bs_i, \bs'_{i} \in P_f(X^+)$ and a substitution
$\varphi_i\colon P_f(X^+) \to P_f(X^+)$ such that
\[
\bt_i = \bp_i\varphi_i(\bs_i)\bq_i+\br_i, \quad \bt_{i+1}=\bp_i\varphi_i(\bs'_{i})\bq_i+\br_i,
\]
where $\bs_{i} \approx \bs'_{i}\in\Sigma$ or $\bs'_{i} \approx \bs_{i} \in \Sigma$,
$\bp_i$ or $\bq_i$ may be the empty word \up(acting as the multiplicative identity\up),
and $\br_i$ may  be the empty set \up(acting as the additive identity\up).
\end{lem}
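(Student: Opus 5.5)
The statement is Lemma~\ref{lem02}, the description of derivations in the equational logic of ai-semirings. I will obtain it by identifying derivability from $\Sigma$ with membership in the fully invariant congruence on the free ai-semiring $P_f(X^+)$ generated by $\Sigma$. Let $\theta$ be the relation with $\bu\,\theta\,\bv$ exactly when $\bu=\bv$ or there is a finite chain $\bt_1,\dots,\bt_n$ of the form in the statement. By Birkhoff's completeness theorem, together with the freeness of $P_f(X^+)$ \cite[Theorem 2.5]{kp}, the identities derivable from $\Sigma$ are exactly the pairs lying in the smallest fully invariant congruence on $P_f(X^+)$ that contains $\{(\bs,\bs') : \bs\approx\bs'\in\Sigma\}$. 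So it suffices to show that $\theta$ is this congruence.

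\emph{Soundness.} Each single step is derivable from $\Sigma$. From $\bs_i\approx\bs_i'$ we get $\varphi_i(\bs_i)\approx\varphi_i(\bs_i')$ by substitution. Multiplying on the left by $\bp_i$ and on the right by $\bq_i$, and then adding $\br_i$, preserves derivability because derivable identities are closed under the operations. Chaining the steps by transitivity shows that every pair in $\theta$ is derivable.

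\emph{Completeness.} I will show that $\theta$ is a fully invariant congruence containing the pairs from $\Sigma$; minimality then gives the converse. Reflexivity holds by definition, symmetry holds by reversing a chain (the statement allows either orientation of each identity), and transitivity holds by concatenating chains. A pair $\bs\approx\bs'$ of $\Sigma$ is one step, taking the identity substitution, empty $\bp,\bq$ and empty $\br$.

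For compatibility with the operations it suffices to treat a single step $\bt=\bp\varphi(\bs)\bq+\br \to \bt'=\bp\varphi(\bs')\bq+\br$. Right multiplication by a term $\bw$ gives, by distributivity,
\[
\bt\bw=\bp\varphi(\bs)(\bq\bw)+\br\bw,
\]
which is again a single step. Left multiplication is symmetric. Addition of $\bw$ gives $\bt+\bw=\bp\varphi(\bs)\bq+(\br+\bw)$, again one step. Here $\br$ possibly being empty is harmless.

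Full invariance is checked the same way. For a substitution $\psi$,
\[
\psi(\bt)=\psi(\bp)\,(\psi\circ\varphi)(\bs)\,\psi(\bq)+\psi(\br),
\]
and $\psi\circ\varphi$ is again a substitution. Empty $\bp$ or $\bq$ stays empty, since $\psi$ preserves the multiplicative identity when it is adjoined formally, and an empty $\br$ stays empty.

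\emph{Main obstacle.} The only real care is bookkeeping with the "empty" conventions. Products with the empty word and sums with the empty set are not terms in $P_f(X^+)$, so I will work in $P_f(X^*)$ with the conventions made explicit and check that all the displayed terms genuinely lie in $P_f(X^+)$. This holds because $\varphi(\bs)$ is nonempty and its words are nonempty.

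Nontriviality of $\bu\approx\bv$ ensures $n\ge2$, so the trivial case $\bu=\bv$ does not need a zero-length chain.
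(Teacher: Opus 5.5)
The paper gives no proof of this lemma; it quotes it from Dolinka \cite[Lemma 3.1]{dolinka}. Your argument is the standard proof behind that citation, and it is correct: you identify derivability from $\Sigma$ with membership in the fully invariant congruence on the free ai-semiring $P_f(X^+)$ generated by $\Sigma$, then check that the chain relation is that congruence. Concretely, you verify it is an equivalence, that it is closed under translations (using distributivity to absorb extra factors into $\bp,\bq$ and extra summands into $\br$), and that it is closed under substitutions (via $\psi\circ\varphi$). The one step left implicit is that compatibility with the binary operations follows from one-sided translations plus transitivity, e.g.\ $\bu\bu'\,\theta\,\bv\bu'\,\theta\,\bv\bv'$.
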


The following lemma, which is the compactness theorem of equational logic (see \cite[Exercise II.14.10]{bs}),
will be our main tool in Sections \ref{sec4} and \ref{sec5} for proving that both $S_{(4, 545)}$ and $S_{(4, 634)}$ have no finite equational basis.
\begin{lem}\label{el26012701}
Let $\mathcal{V}$ be an ai-semiring variety.
If $\mathcal{V}$ is finitely based,
then every equational basis of $\mathcal{V}$ contains a finite subset that also defines $\mathcal{V}$.
Equivalently, if $\mathcal{V}$ has an infinite equational basis none of whose finite subsets defines $\mathcal{V}$,
then $\mathcal{V}$ is nonfinitely based.
\end{lem}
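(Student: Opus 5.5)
The plan is to prove the first formulation directly; the second is its contrapositive, so nothing more is needed for it. The one tool beyond the definitions is the finiteness of derivations provided by Lemma~\ref{lem02}, together with Birkhoff's completeness theorem. Completeness says that an identity holds in the variety defined by a set $\Sigma$ exactly when it is derivable from $\Sigma$.

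Assume $\mathcal{V}$ is finitely based, say by a finite set $\Sigma_0$, and let $\Sigma$ be an arbitrary equational basis of $\mathcal{V}$.

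1. Every identity $\bu\approx\bv$ in $\Sigma_0$ holds in $\mathcal{V}$. Since $\Sigma$ defines $\mathcal{V}$, completeness makes $\bu\approx\bv$ derivable from $\Sigma$.
2. If $\bu\approx\bv$ is trivial, it needs no identities of $\Sigma$. Otherwise, Lemma~\ref{lem02} gives a finite sequence $\bt_1,\dots,\bt_n$ in which each step $\bt_i\to\bt_{i+1}$ uses one identity $\bs_i\approx\bs_i'$ with $\bs_i\approx\bs_i'\in\Sigma$ or $\bs_i'\approx\bs_i\in\Sigma$.
3. Let $\Sigma_{\bu\approx\bv}\subseteq\Sigma$ be the set of those at most $n-1$ identities, each taken in the orientation in which it lies in $\Sigma$. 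It is finite, and the same sequence is a valid derivation of $\bu\approx\bv$ from $\Sigma_{\bu\approx\bv}$ by Lemma~\ref{lem02}.
4. Set $\Sigma_1=\bigcup_{\bu\approx\bv\in\Sigma_0}\Sigma_{\bu\approx\bv}$. As a finite union of finite sets, $\Sigma_1$ is a finite subset of $\Sigma$.

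It remains to check that $\Sigma_1$ defines $\mathcal{V}$.

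1. Every identity of $\Sigma_1$ belongs to $\Sigma$ and so holds in $\mathcal{V}$. Hence the variety defined by $\Sigma_1$ contains $\mathcal{V}$.
2. Conversely, every identity of $\Sigma_0$ is derivable from $\Sigma_1$. By soundness, every algebra satisfying $\Sigma_1$ satisfies $\Sigma_0$ and therefore lies in $\mathcal{V}$.

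So $\Sigma_1$ is a finite subset of $\Sigma$ that defines $\mathcal{V}$.

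There is no real obstacle. The only point requiring care is that derivability from $\Sigma$ is witnessed by a finite chain as in Lemma~\ref{lem02}, which is what lets us extract finitely many identities per element of $\Sigma_0$. The rest is bookkeeping with soundness and completeness.
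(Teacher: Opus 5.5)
Your proof is correct. It takes, via Lemma~\ref{lem02}, the finitely many identities of $\Sigma$ used in the derivations of the members of a finite basis $\Sigma_0$, and then closes with soundness and completeness; this is the standard proof of the compactness theorem of equational logic. The paper gives no proof of its own here and only cites this result (Burris--Sankappanavar, Exercise II.14.10), so your argument supplies what that citation stands for.
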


Following Volkov~\cite{volkov2024},
we write $\bu \preceq \bv$ (or equivalently $\bv \succeq \bu$) to denote the identity $\bu+\bv \approx \bv$
and call it an \emph{ai-semiring inequality} (or simply an \emph{inequality}).
The term $\mathbf{u}$ is called the \emph{lower side} and $\mathbf{v}$ the \emph{upper side} of the inequality.
It is straightforward to verify that an ai-semiring $S$ satisfies an inequality $\bu\preceq \bv$
if and only if $\varphi(\bu)\leq\varphi(\bv)$ for every semiring homomorphism $\varphi\colon P_f(X^+) \rightarrow S$.
Consequently, $S$ satisfies an identity $\bu\approx \bv$
precisely when it satisfies both inequalities $\bu\preceq\bv$ and $\bv\preceq\bu$.
For this reason, every set of identities can be equivalently expressed as a set of inequalities.

Suppose that $\Sigma$ is a set of identities.
Let $\mathbf{u}\approx \mathbf{v}$ be an identity such that
\[
\mathbf{u}=\mathbf{u}_1+\cdots+\mathbf{u}_k,\quad \mathbf{v}=\mathbf{v}_1+\cdots+\mathbf{v}_\ell,
\]
where $\mathbf{u}_i,\mathbf{v}_j\in X^+$ for $1\leq i\leq k$ and $1\leq j\leq \ell$.
One can readily check that the ai-semiring variety defined by $\mathbf{u}\approx \mathbf{v}$
coincides with the ai-semiring variety defined by the inequalities
\[
\mathbf{u}_i\preceq \mathbf{v}, \quad \mathbf{v}_j\preceq \mathbf{u}\quad(1\leq i\leq k, ~1\leq j\leq \ell).
\]
Consequently, to prove that $\mathbf{u}\approx \mathbf{v}$ is derivable from $\Sigma$,
it suffices to show that for every $i$ and $j$,
the inequalities
$\mathbf{u}_i\preceq \mathbf{v}$, $\mathbf{v}_j\preceq \mathbf{u}$
can be derived from $\Sigma$.
For this reason, throughout this paper we restrict our attention to the inequalities of the form $\bq\preceq\bu$,
where $\bq$ is a word and $\bu$ is a term.
This technique will be used repeatedly in the sequel.

Next, we introduce some notation.
Let $\bw$ be a nonempty word, and let $x$ be a variable. Then
\begin{itemize}
\item $c(\bw)$ denotes the \emph{content} of $\bw$, that is, the set of all variables that occur in $\bw$;

\item $\ell(\bw)$ denotes the \emph{length} of $\bw$, that is, the number of variables occurring in $\bw$ counting multiplicities;

\item $S_2(\bw)$ denotes the set of all subwords of length $2$ of $\bw$;

\item $occ(x, \bw)$ denotes the number of occurrences of $x$ in $\bw$.
\end{itemize}

Now let $\bu$ be a term such that $\bu=\bu_1+\bu_2+\cdots+\bu_n$,
where $\bu_i \in X^+$, $1 \leq i \leq n$.
Let $\bq$ be a nonempty word, and let $k$ be a positive integer. Then
\begin{itemize}
\item $c(\bu)$ denotes the \emph{content} of $\bu$, that is,
\[
c(\bu)=\bigcup_{i=1}^n c(\bu_i);
\]

\item $L_{\geq k}(\bu)$ denotes the set $\{\bu_i \in \bu \mid \ell(\bu_i)\geq k\}$;

\item $D_{\bq}(\bu)$ denotes the set $\{\bu_i \in \bu \mid c(\bu_i)\subseteq c(\bq)\}$;

\item $S_2(\bu)$ denotes the set $\bigcup_{1\leq i \leq n}S_2(\bu_i)$;

\item $\delta(\bu)$ denotes the set of nonempty subsets $Z$ of $c(\bu)$ such that for every
 $\bu_i\in\bu$, $Z\cap c(\bu_i)$ is a singleton and $occ(x,\bu_i)=1$ if $\{x\}=Z\cap c(\bu_i)$.
\end{itemize}

Finally, we state the characterization of identities of $D_2$, which follows directly from Shao and Ren~\cite[Lemma 1.1]{sr}.
\begin{lem}\label{lem01}
Let $\bq\preceq\bu$ be a nontrivial inequality such that
$\bu=\bu_1+\bu_2+\cdots+\bu_n$ and $\bu_i, \bq \in X^+$ for all $1\leq i \leq n$. Then
$\bq\preceq\bu$ holds in $D_2$ if and only if $c(\bu_i) \subseteq c(\bq)$ for some $\bu_i \in \bu$.
\end{lem}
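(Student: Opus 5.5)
Lemma~\ref{lem01} says that a nontrivial inequality $\bq\preceq\bu$ holds in $D_2$ exactly when some summand $\bu_i$ satisfies $c(\bu_i)\subseteq c(\bq)$. Both directions reduce to valuations into $D_2=\{0,1\}$. Here $+$ is join, so $1$ is the top. Multiplication is meet, so $0$ is a multiplicative zero. Under a homomorphism $\varphi\colon P_f(X^+)\to D_2$, a word $\bw$ is sent to the meet of the values of its letters. Hence $\varphi(\bw)=1$ if and only if $\varphi(x)=1$ for every $x\in c(\bw)$. A term $\bu=\bu_1+\cdots+\bu_n$ is sent to the join of the values of its summands. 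Hence $\varphi(\bu)=1$ if and only if $\varphi(\bu_i)=1$ for some $i$. Repeated letters and their order are irrelevant, so only contents matter.

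For sufficiency, suppose $c(\bu_i)\subseteq c(\bq)$ for some $i$, and let $\varphi$ be any valuation. The inequality $\varphi(\bq)\leq\varphi(\bu)$ is automatic when $\varphi(\bq)=0$. If $\varphi(\bq)=1$, then every variable of $c(\bq)$ takes the value $1$. In particular every variable of $c(\bu_i)$ takes the value $1$. So $\varphi(\bu_i)=1$, and therefore $\varphi(\bu)=1$.

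For necessity, argue by contraposition. Suppose that for every $i$ the summand $\bu_i$ contains a variable outside $c(\bq)$. Define $\varphi$ by $\varphi(x)=1$ for $x\in c(\bq)$ and $\varphi(x)=0$ otherwise; this extends to a homomorphism by freeness of $P_f(X^+)$. Then $\varphi(\bq)=1$. Each $\varphi(\bu_i)=0$, so $\varphi(\bu)=0$. This violates $\varphi(\bq)\leq\varphi(\bu)$, so the inequality fails in $D_2$.

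There is no real obstacle. The only points to state carefully are these:
\begin{itemize}
\item satisfying $\bq\preceq\bu$ in $D_2$ means $\varphi(\bq)\leq\varphi(\bu)$ for all $\varphi$, as observed earlier in the paper;
\item the valuation of a word depends only on its content.
\end{itemize}
Nontriviality of the inequality is not actually used. Alternatively, one can cite \cite[Lemma 1.1]{sr} directly, since it characterizes the identities of $D_2$ by contents in exactly this way.
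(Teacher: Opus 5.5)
Your proof is correct, and it is complete: the two valuation computations settle both directions. Your observation that nontriviality is never used is also right.

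The paper gives no argument of its own here. It says the lemma follows directly from Lemma 1.1 of Shao and Ren, which characterizes arbitrary identities of $D_2$ by a content criterion. The stated lemma is that criterion specialized to the identity $\bq+\bu\approx\bu$, where the only nontrivial requirement is the one attached to the word $\bq$.

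You instead verify the criterion from scratch. For sufficiency, $\varphi(\bq)=1$ forces every variable of $c(\bq)$, and hence of $c(\bu_i)$, to take the value $1$. For necessity, you use the characteristic valuation of $c(\bq)$ as the separating homomorphism.

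What your route buys is a self-contained argument that exhibits the failing valuation explicitly. The paper's citation is shorter, and it rests on a criterion that covers arbitrary two-sided identities of $D_2$, not just inequalities with a single word on the lower side.
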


\section{Equational basis for $S_{(4,545)}$}\label{sec4}
In this section, we first establish a sufficient condition for an ai-semiring to be nonfinitely based.
As an application, we prove that $S_{(4,545)}$ has no finite equational basis.
Throughout the remainder of the paper
every term is built from words in the free commutative semigroup $X^+_c$;
that is, words are considered up to permutation of variables.

Let $n\geq 1$ be an integer, and let $0\leq k\leq n$. Define
\[
\bu_{n, k}=\prod_{i=1}^n x_i+\left(\sum_{i=1}^kx_ix_{n+1}\right)+\left(\sum_{i=k+1}^nx_ix_{n+1}y_i\right).
\]
In particular, if $k=0$, then the middle sum is absent, and so
\[
\bu_{n, 0}=\prod_{i=1}^n x_i+\left(\sum_{i=1}^nx_ix_{n+1}y_i\right);
\]
if $k=n$, then the last sum disappears, and so
\[
\bu_{n, n}=\prod_{i=1}^n x_i+\left(\sum_{i=1}^nx_ix_{n+1}\right).
\]
For convenience, we sometimes factor $x_{n+1}$ out of the two sums and write
\[
\bu_{n, k}=\prod_{i=1}^n x_i+\left(\sum_{i=1}^kx_i+\sum_{i=k+1}^nx_iy_i\right)x_{n+1}.
\]

The following lemma collects several basic properties of the terms $\mathbf{u}_{n,k}$,
which will be used repeatedly in the subsequent proofs.
\begin{lem}\label{lem26011801}
Let $n\geq 1$ be an integer, and let $0\leq k\leq n$.
\begin{itemize}
\item[(a)] Every word in $\bu_{n,k}$ is linear.

\item[(b)] The content intersection of any two distinct words in $\bu_{n,k}$ is a singleton.

\item[(c)] No distinct words in $\bu_{n,k}$ share a common subword of length $2$.

\item[(d)] $\bu_{n,k}$ has no subterm that is the square of a term.

\item[(e)] If $n\geq 2$ and $\bp$ and $\bq $ are words in $\bu_{n,k}$ with $\bp \leq\bq$, then $\bp=\bq$.

\item[(f)] $x_1x_2\cdots x_n$ is the unique additive subterm of $\bu_{n,k}$
whose content is contained in $\{x_1, x_2, \ldots, x_n\}$.

\item[(g)] If $n\geq 4$, then $x_1x_2\cdots x_n$ is the unique word in $\bu_{n, k}$ of length at least $4$.

\item[(h)]
If $n\geq 3$, then
the term $\bu_{n, n}$ contains exactly one word, $x_1x_2\cdots x_n$, whose length is at least $3$.

\item[(i)]
If $n\geq 3$, then
$\bu_{n, n}$ does not have a subterm of the form $\bt_1\bt_2+\bt_2\bt_3+\bt_3\bt_1$.

\item[(j)]
$\bu_{n, 0}$ does not have a subterm of the form $\bt_1\bt_2+\bt_2\bt_3+\bt_3\bt_1$.
\end{itemize}
\end{lem}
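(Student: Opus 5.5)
Most of these items are read off directly from the explicit list of words
\[
\bw_0=x_1x_2\cdots x_n,\qquad \bw_i=x_ix_{n+1}\ (1\le i\le k),\qquad \bw_i=x_ix_{n+1}y_i\ (k<i\le n),
\]
in the free commutative semigroup. The plan is to settle (a)--(h) first by direct inspection, and then treat the two ``structural'' items (d), (i), (j) by a content/length argument.

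\textbf{Items (a)--(c), (e)--(h).}
\begin{itemize}
\item[(a)] All variables in each $\bw_i$ are distinct.
\item[(b)] We have $c(\bw_0)\cap c(\bw_i)=\{x_i\}$ and $c(\bw_i)\cap c(\bw_j)=\{x_{n+1}\}$ for $i\ne j\ge1$.
\item[(c)] This follows from (b): a shared length-$2$ subword would give an intersection of size $2$.
\item[(e)] For $n\ge2$, a word $\bp\le\bq$ needs $c(\bp)\subseteq c(\bq)$. By (b) this forces $\bp=\bq$, unless $\bp$ has only one variable, which never happens since every $\bw_i$ has length $\ge2$.
\item[(f)] Every word other than $\bw_0$ contains $x_{n+1}$. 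So an additive subterm with content in $\{x_1,\dots,x_n\}$ can only be $\bw_0$.
\item[(g), (h)] The lengths are $n$, $2$ and $3$, so both follow.
\end{itemize}

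\textbf{Item (d).} Suppose $\bt^2\le\bu_{n,k}$, i.e. $\bu_{n,k}=\bp_1\bt^2\bp_2+\bp_3$. Pick any word $\bs\in\bt$. Then $\bp\bs^2\bp'$ is a word of $\bu_{n,k}$ for suitable words $\bp,\bp'$. This word contains every variable of $\bs$ twice, contradicting (a).

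\textbf{Items (i) and (j).} These are the main obstacle. Suppose $\bu=\bp_1(\bt_1\bt_2+\bt_2\bt_3+\bt_3\bt_1)\bp_2+\bp_3$, and fix words $\bs_j\in\bt_j$. Then $\bp\bs_1\bs_2\bp'$, $\bp\bs_2\bs_3\bp'$ and $\bp\bs_3\bs_1\bp'$ are words of $\bu$ for fixed words $\bp,\bp'$ (absorb $\bp_1,\bp_2$ by choosing words from them; commutativity lets us merge them into one factor $\bp$).

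First, these three words are pairwise distinct. If $\bs_1\bs_2=\bs_2\bs_3$, then $\bs_1=\bs_3$, and the word $\bp\bs_3\bs_1\bp'$ contains the square $\bs_1^2$, contradicting (a).

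So we have three distinct words of $\bu$. Pairwise, their contents intersect in at least $c(\bp\bp')\cup c(\bs_j)$, where $\bs_j$ is the shared factor. By (b), each such intersection is a singleton. Hence $\bp\bp'$ is empty, and each $\bs_j$ is a single variable. The three words are then $s_1s_2$, $s_2s_3$ and $s_3s_1$: three words of length $2$ in $\bu$ whose contents pairwise meet in distinct variables.

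\begin{itemize}
\item[(i)] In $\bu_{n,n}$ with $n\ge3$, the only length-$2$ words are the $x_ix_{n+1}$. Any three of them pairwise share $x_{n+1}$, so we cannot have $s_1,s_2,s_3$ distinct with $s_1s_2,s_2s_3,s_3s_1$ all present. This gives a contradiction.
\item[(j)] In $\bu_{n,0}$ there are no length-$2$ words except $\bw_0$ when $n=2$. Three are needed, which is a contradiction.
\end{itemize}

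The delicate point is justifying that $\bp_1,\bp_2$ may be absorbed and that distinctness holds. I will handle this using cancellativity and the linearity from (a), as sketched above.
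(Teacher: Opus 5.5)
Your proof is correct, and it is essentially the direct inspection the paper has in mind; the paper omits the verification as straightforward. Items (a)--(c) and (e)--(h) are read off the explicit list of words, while (d), (i) and (j) reduce to linearity and item (b) once you pick single words from $\bp_1,\bp_2$ and from each $\bt_j$. The one step worth writing out explicitly is why $\bp\bp'$ is empty: its content would lie in each of the singletons $\{s_1\},\{s_2\},\{s_3\}$, and these are pairwise distinct because $s_1s_2$, $s_2s_3$, $s_3s_1$ are linear.
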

\begin{proof}
All statements follow directly from the explicit form of $\bu_{n,k}$;
their verification is straightforward and is omitted.
\end{proof}

\begin{pro}\label{pro26012501}
Let $n \geq 1$ be an integer and $0 \leq k \leq n$.
Then $\mathbf{u}_{m,0}$ is $\mathbf{u}_{n,k}$-free for every $m \geq 4$.
In particular, when $n = m$ we require $k \geq 1$.
\end{pro}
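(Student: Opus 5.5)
We argue by contradiction. Suppose that for some substitution $\varphi$ the term $\varphi(\bu_{n,k})$ is a subterm of $\bu_{m,0}$. Since all words are commutative, this gives $\bu_{m,0}=\bp\,\varphi(\bu_{n,k})+\br$ for some term $\bp$ (possibly empty) and $\br$ (possibly empty). So every word of $\bp\,\varphi(\bx)$, for every word $\bx\in\bu_{n,k}$, is a word of $\bu_{m,0}$. The words of $\bu_{m,0}$ are $\bw:=x_1\cdots x_m$, of length $m\geq 4$, and the length-$3$ words $x_jx_{m+1}y_j$. We will use Lemma~\ref{lem26011801}(a)--(d) for $\bu_{m,0}$ throughout: all words are linear, distinct words meet in exactly one variable, no two distinct words share a length-$2$ subword, and no subterm is a square.

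\emph{Step 1 (rigidity).} First we show that $\bp$ is empty or a single word, and that each $\varphi(x_i)$, $\varphi(x_{n+1})$ and $\varphi(y_i)$ is a single word. If some factor in a product $\bp\,\varphi(x_1)\cdots\varphi(x_n)$ contained two words, the product would contain two distinct words sharing the content of the remaining factors. By Lemma~\ref{lem26011801}(b),(c) that shared part has at most one variable, and by linearity repeated variables are excluded. This forces most factors to be trivial, and the few leftover configurations are ruled out by (d) and by comparing with the words $\bp\varphi(x_i)\varphi(x_{n+1})[\varphi(y_i)]$. This case analysis is fiddly, and I expect it to be the main obstacle. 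The most delicate sub-case is $n$ small, where $\varphi(x_1\cdots x_n)$ has few factors, so I would treat $n\le 2$ separately.

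\emph{Step 2 (locating the image of the long word).} Put $\bs=\bp\,\varphi(x_1)\cdots\varphi(x_n)$; this is a word of $\bu_{m,0}$.

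Case A: $\bs$ is a length-$3$ word $x_jx_{m+1}y_j$. Then $n\leq 3$. For each $i$, the word $\bp\varphi(x_i)\varphi(x_{n+1})[\varphi(y_i)]$ contains $c(\bp\varphi(x_i))$. It is distinct from $\bs$, since otherwise $\varphi(x_{n+1})$ and the other factors would have to overlap inside a linear word. By (b) it therefore meets $\bs$ in a single variable. Consequently $\bp$ is empty and each $\varphi(x_i)$ is a single variable. Now, for $n\ge 2$, the $n$ words $\varphi(x_i)\varphi(x_{n+1})\cdots$ all contain $c(\varphi(x_{n+1}))$, so $\varphi(x_{n+1})$ must be a single variable $z$. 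This $z$ lies outside $c(\bs)$ and belongs to several words each meeting the length-$3$ word $\bs$. In $\bu_{m,0}$ the only variable shared by several words is $x_{m+1}$, and $x_{m+1}\in c(\bs)$, a contradiction. The case $n=1$ is handled directly in the same way.

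Case B: $\bs=\bw$. The same intersection argument shows that $\bp$ is empty. It also shows that each word $\varphi(x_i)\varphi(x_{n+1})[\varphi(y_i)]$ lies in $\bu_{m,0}$, is different from $\bw$ because it contains $\varphi(x_{n+1})$, and meets $\bw$ exactly in $c(\varphi(x_i))$. Hence each $\varphi(x_i)$ is a single variable of $\bw$, and these variables are distinct by linearity. Therefore $n=m$ and $\varphi$ permutes $x_1,\dots,x_m$. Since each remaining word has length $3$ and contains $x_{m+1}$, we get $\varphi(x_{n+1})=x_{m+1}$.

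\emph{Step 3 (using the length-$2$ words).} Now use $k$. In Case B we have $n=m$, and the statement requires $k\ge1$, so the word $x_1x_{n+1}$ lies in $\bu_{n,k}$. Its image $\varphi(x_1)\,x_{m+1}$ has length $2$. But every word of $\bu_{m,0}$ has length $3$ or $m\ge4$, so this image is not in $\bu_{m,0}$, a contradiction. This completes the argument, with Step 1 carrying most of the combinatorial work and Steps 2 and 3 being bookkeeping with Lemma~\ref{lem26011801}.
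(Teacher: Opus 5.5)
Your approach is sound and genuinely different from the paper's, but as written it rests on Step~1, which you only sketch. That step should be deleted rather than completed, because Steps~2--3 never need it. Fix one word $p$ of $\bp$ (with $p=\varepsilon$ if $\bp$ is the empty word), and words $a_i\in\varphi(x_i)$, $z\in\varphi(x_{n+1})$, $b_i\in\varphi(y_i)$. Then $\bs=pa_1\cdots a_n$, together with $W_i=pa_iz$ for $i\le k$ and $W_i=pa_izb_i$ for $i>k$, are all words of $\bu_{m,0}$, and every step you take concerns only such products:
\begin{enumerate}
\item $W_i\neq\bs$. For $n=1$ this is by length. For $n\ge2$, cancelling $pa_i$ would give $z[b_i]=\prod_{j\neq i}a_j$. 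Then $z$ meets some $a_j$ with $j\neq i$, so $W_j$ is not linear.
\item Hence, by Lemma~\ref{lem26011801}(b), $c(pa_i)\subseteq c(W_i)\cap c(\bs)$ is a singleton. So $p=\varepsilon$, each $a_i$ is a variable, and $\ell(\bs)=n\in\{3,m\}$.
\end{enumerate}

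There is one inaccuracy to repair in Case~A. The claim that $x_{m+1}$ is the only variable of $\bu_{m,0}$ shared by several words is false for two words: $x_j$ lies in both $x_1\cdots x_m$ and $x_jx_{m+1}y_j$. What saves the argument is the following chain:
\begin{enumerate}
\item Case~A forces $n=\ell(\bs)=3$, so your subcases $n\le2$ are vacuous.
\item $W_1,W_2,W_3$ are pairwise distinct. If $W_i=W_{i'}$, then $a_{i'}\in c(b_i)$, since $a_{i'}\in c(z)$ would make $W_{i'}$ non-linear. So $W_i\neq\bs$ share the length-$2$ subword $a_ia_{i'}$, contradicting Lemma~\ref{lem26011801}(c).
\item Lemma~\ref{lem26011801}(b) then makes $z$ a single variable lying in three distinct words of $\bu_{m,0}$, which forces $z=x_{m+1}$.
\item But $x_{m+1}\in c(\bs)=\{a_1,a_2,a_3\}$, so some $W_l$ contains $a_l^2$, a contradiction.
\end{enumerate}
Similarly, in Case~B, $W_i\neq\bw$ comes from the cancellation argument, not from ``containing $\varphi(x_{n+1})$''. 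The $W_i$ are pairwise distinct because $c(W_i)\cap c(\bw)=\{a_i\}$, which is what lets you conclude $z=x_{m+1}$.

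For comparison, the paper splits into $n>m$, $n=m$ and $n<m$. For $n<m$ it proves a chain of claims:
\begin{itemize}
\item $n\ge2$;
\item the sets $c(\varphi(x_i))$ are pairwise disjoint;
\item $\bp$ is a word, then at most a variable, then empty;
\item the image of the $x_{n+1}$-part is not $x_1\cdots x_m$.
\end{itemize}
It then analyses the possible multiplicative factorizations of $\varphi(\sum_{i\le k}x_i+\sum_{i>k}x_iy_i)\varphi(x_{n+1})$ inside $\bu_{m,0}$. Your single observation, that each $W_i$ is a word of $\bu_{m,0}$ different from $\bs$ yet containing $c(\bp\varphi(x_i))$, does all of that preliminary work at once via Lemma~\ref{lem26011801}(b). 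It reduces everything to the two cases $\ell(\bs)=3$ and $\ell(\bs)=m$, treats $n<m$ and $n>m$ uniformly, and avoids factorizations of terms; it uses only parts (a)--(c) of Lemma~\ref{lem26011801}. The paper's claim-by-claim organisation is the same one it reuses in Propositions~\ref{pro26012502} and~\ref{lem06}.
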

\begin{proof}
Suppose for contradiction that $\bu_{m, 0}$ is not $\bu_{n, k}$-free for some $m \geq 4$, $n\geq1$ and $0\leq k\leq n$.
Then there exist terms $\bp, \br\in P_f(X_c^+)$ and a substitution $\varphi$ such that
\begin{equation}\label{26011601}
\bp\varphi(\bu_{n,k})+\br=\bu_{m,0},
\end{equation}
where $\bp$ may be the empty word, and $\br$ may be the empty set.

If $n>m$, then $n\geq 5$, and so every word in $\bp\varphi(x_1x_2\cdots x_n)$ has length at least $5$.
By Lemma~\ref{lem26011801}(g), $\bp\varphi(x_1x_2\cdots x_n)=x_1x_2\cdots x_m$.
Thus $n\leq m$, contradicting $n>m$.

If $n=m$, then $k \geq 1$ and $n\geq4$, and so $\bp\varphi(x_1x_2\cdots x_n)=x_1x_2\cdots x_m$.
This forces each $\varphi(x_i)$ to be a single variable and $\bp$ is the empty word.
So we may assume that $\varphi(x_j)=x_{i_j}$ for all $1\leq j \leq n$.
By \eqref{26011601},
$\varphi(x_1x_{n+1})$ is an additive subterm of $x_1x_2\cdots x_m+x_{i_1}y_{i_1}x_{m+1}$.
This implies that $\varphi(x_{n+1})$ is an additive subterm of $x_{i_2}\cdots x_{i_m}+y_{i_1}x_{m+1}$,
and so $\varphi(x_2x_{n+1})$ is an additive subterm of $x_{i_2}x_{i_2}\cdots x_{i_m}+x_{i_2}y_{i_1}x_{m+1}$,
which contradicts \eqref{26011601}.

Now suppose that $n<m$.
Our first goal is to extract enough information about $\bp$, $\varphi$, $n$ and $k$ from \eqref{26011601}.
\begin{claim}\label{claim01}
$n\geq2$.
\end{claim}
\begin{proof}[Proof of Claim $\ref{claim01}$.]
If $n=1$, then $k=0$ or $1$, and so $\bu_{n,k}$ is either $x_1+x_1y_1x_2$ or $x_1+x_1x_2$.
Consequently,
\[
\bu_{m,0}=\bp\varphi(\bu_{n,k})+\br=\bp\varphi(x_1+x_1y_1x_2)+\br=\bp\varphi(x_1)+\bp\varphi(x_1)\varphi(y_1x_2)+\br
\]
or
\[
\bu_{m,0}=\bp\varphi(\bu_{n,k})+\br=\bp\varphi(x_1+x_1x_2)+\br=\bp\varphi(x_1)+\bp\varphi(x_1)\varphi(x_2)+\br.
\]
In either case, $\bu_{m,0}$ would contain two distinct words in which one is a proper subword of the other.
This contradicts Lemma~\ref{lem26011801}(e).
Thus $n\geq2$.
\end{proof}

\begin{claim}\label{claim555}
The sets $c(\varphi(x_1)), c(\varphi(x_2)), \ldots, c(\varphi(x_n))$ are pairwise disjoint.
\end{claim}
\begin{proof}
Assume that for some distinct indices $i, j\in\{1, 2, \ldots, n\}$
the intersection $c(\varphi(x_i))\cap c(\varphi(x_j))$ is nonempty.
Then $\varphi(x_1x_2\cdots x_n)$ would have a subterm that is the square of a variable,
and subsequently $\bu_{m,0}$ would also have such a subterm,
contradicting Lemma~\ref{lem26011801}(d).
\end{proof}

\begin{claim}\label{claim0111}
The term $\bp$ is a word.
\end{claim}
\begin{proof}[Proof of Claim $\ref{claim0111}$.]
Suppose that $\bp_1$ and $\bp_2$ are distinct words in $\bp$.
By \eqref{26011601}, both $\bp_1\varphi(x_1x_2\cdots x_n)$ and $\bp_2\varphi(x_1x_2\cdots x_n)$ are contained in $\bu_{m,0}$.
Claim~\ref{claim01} gives $n\geq2$.
Consequently, $\bu_{m,0}$ would contain two distinct words sharing a common subword of length $2$,
which contradicts Lemma~\ref{lem26011801}$(c)$.
\end{proof}

\begin{claim}\label{claim02}
$\displaystyle \bp\varphi\left(\left(\sum_{i=1}^kx_i
               +\sum_{i=k+1}^nx_iy_i\right)x_{n+1}\right)\neq x_1x_2\cdots x_m$.
\end{claim}
\begin{proof}[Proof of Claim $\ref{claim02}$.]
Suppose for contradiction that
\[
\bp\varphi\left(\left(\sum_{i=1}^kx_i
               +\sum_{i=k+1}^nx_iy_i\right)x_{n+1}\right)= x_1x_2\cdots x_m.
\]
If $\bp$ is nonempty, then we may assume that
\[
\bp=x_1x_2\cdots x_t
\]
and
\[
\varphi\left(\left(\sum_{i=1}^kx_i
               +\sum_{i=k+1}^nx_iy_i\right)x_{n+1}\right)=x_{t+1}\cdots x_m
\]
for some $1\leq t<m$.
This implies that $x_j\leq\varphi(x_{n+1})$ for some $t+1\leq j\leq m$,
and that $c(\bp\varphi(x_1\cdots x_n))\subseteq\{x_1, x_2, \dots, x_m\}$.
By \eqref{26011601} and Lemma~\ref{lem26011801}$(f)$, one can deduce that $\bp\varphi(x_1\cdots x_n)=x_1x_2\cdots x_m$,
whence $\varphi(x_1\cdots x_n)=x_{t+1}\cdots x_m$.
Thus $x_j\leq\varphi(x_r)$ for some $1\leq r\leq n$,
and so $x_j^2\leq\varphi(x_rx_{n+1})\leq \bu_{m, 0}$,
contradicting Lemma~\ref{lem26011801}(d).

If $\bp$ is empty, then a similar argument leads to the same contradiction.
\end{proof}

\begin{claim}\label{claim02111}
The word $\bp$ is either empty or a single variable.
\end{claim}
\begin{proof}[Proof of Claim $\ref{claim02111}$.]
By Claim~\ref{claim0111}, $\bp$ is word.
If its length exceeds $1$, then by \eqref{26011601} and Lemma~\ref{lem26011801}(g),
\[
\bp\varphi\left(\left(\sum_{i=1}^kx_i+\sum_{i=k+1}^nx_iy_i\right)x_{n+1}\right)=x_1x_2\cdots x_m.
\]
This contradicts Claim~\ref{claim02}.
Thus the length of $\bp$ is at most $1$;
therefore $\bp$ is either empty or a single variable.
\end{proof}


\begin{claim}\label{claim021122}
The word $\bp$ is empty.
\end{claim}
\begin{proof}[Proof of Claim $\ref{claim021122}$.]
Suppose for contradiction that $\bp$ is nonempty.
By Claim~\ref{claim02111}, $\bp$ is then a single variable.
Claim~\ref{claim01} tells us that $n\geq 2$.

Observe that every word in $\bp\varphi(x_iy_ix_{n+1})$ has length at least $4$ for every $k+1\leq i\leq n$.
By Lemma~\ref{lem26011801}(g),
\begin{equation}\label{26011801}
\bp\varphi(x_iy_ix_{n+1})=x_1x_2\cdots x_m \quad (k+1\leq i\leq n).
\end{equation}
Combining Claim~\ref{claim02}, one can deduce that $k\geq 1$.

If $k\geq 2$, then by Lemma~\ref{lem26011801}(c),
$\bp\varphi(x_1x_{n+1})=\bp\varphi(x_2x_{n+1})$.
Hence $\varphi(x_1)=\varphi(x_2)$, and so the square $\varphi(x_1)\varphi(x_1)$
is a subterm of $\bu_{m, 0}$, which contradicts Lemma~\ref{lem26011801}(d).

If $k=1$, then by \eqref{26011801}, $\bp\varphi(x_iy_ix_{n+1})=x_1x_2\cdots x_m$ for every $2\leq i\leq n$.
So $\bp=x_j$ for some $1\leq j \leq m$.
Since $\bp\varphi(x_1x_{n+1})$ and $\bp\varphi(x_2y_2x_{n+1})$ share a common subword of length $2$,
it follows from Lemma~\ref{lem26011801}(c) that $\bp\varphi(x_1x_{n+1})=\bp\varphi(x_2y_2x_{n+1})$.
Consequently,
\[
\bp\varphi\left(\left(x_1+\sum_{i=2}^nx_iy_i\right)x_{n+1}\right)= x_1x_2\cdots x_m,
\]
which contradicts Claim~\ref{claim02}.
\end{proof}

Now we complete the main proof of the proposition.
By Claim~\ref{claim021122}, the equality~\eqref{26011601} reduces to the form
\begin{equation}\label{26011810}
\varphi(\bu_{n,k})+\br=\bu_{m,0}.
\end{equation}
Hence, in the subset representation, $\varphi(\bu_{n,k})$ is a subset of $\bu_{m,0}$.
From Claim~\ref{claim02} we obtain that
\begin{equation}\label{226011950}
\bt=\varphi\left(\left(\sum_{i=1}^kx_i
               +\sum_{i=k+1}^nx_iy_i\right)x_{n+1}\right)
               =\varphi\left(\sum_{i=1}^kx_i
               +\sum_{i=k+1}^nx_iy_i\right)\varphi(x_{n+1})
\end{equation}
contains a word of the form $x_iy_ix_{m+1}$ for some $1\leq i \leq m$.

\textbf{Case 1.} The term $\bt$ contains at least two distinct words.
Then $\bt$ must also contain either $x_1x_2\cdots x_m$ or $x_jy_jx_{m+1}$ for some $j\neq i$ ($1\leq j\leq m$).

\textbf{Subcase 1.1.} The term $\bt$ contains $x_1x_2\cdots x_m$. Then
\[
\bt=x_1x_2\cdots x_m+x_iy_ix_{m+1}=x_i(x_1x_2\cdots x_{i-1}x_{i+1}\cdots x_m+y_ix_{m+1}).
\]
If this were impossible, $\bt$ would has no nontrivial multiplicative subterm, contradicting \eqref{226011950}.

By Claim~\ref{claim555}, $\varphi(x_{n+1})=x_i$ and
\[
\varphi\left(\sum_{i=1}^kx_i+\sum_{i=k+1}^nx_iy_i\right)=x_1x_2\cdots x_{i-1}x_{i+1}\cdots x_m+y_ix_{m+1}.
\]
Consequently,
\[
c(\varphi(x_1x_2\cdots x_n))\subseteq \{x_1,x_2,\dots, x_{i-1}, y_i, x_{i+1},\dots, x_m, x_{m+1}\},
\]
so $x_i, y_j\notin c(\varphi(x_1x_2\cdots x_n))$ for every $j\neq i$ ($1\leq j \leq m$).
Hence every word in $\varphi(x_1x_2\cdots x_n)$ does not belong to $\bu_{m,0}$,
and so $\varphi(x_1x_2\cdots x_n)\nsubseteq \bu_{m,0}$, contradicting \eqref{26011810}.

\textbf{Subcase 1.2.}
The term $\bt$ contains $x_jy_jx_{m+1}$ for some $j\neq i$ ($1\leq j \leq m$).
Then $\bt$ does not contain $x_1x_2\cdots x_m$.
By \eqref{226011950},
we may write
\[
\bt=\sum_{i=1}^\ell x_iy_ix_{m+1}
\]
for some $2\leq\ell\leq m$. Since $\bt$ has the unique multiplicative factorization
\[
\bt=\left(\sum_{i=1}^\ell x_iy_i\right)x_{m+1},
\]
it follows from Claim~\ref{claim555} that $\varphi(x_{n+1})=x_{m+1}$ and
\begin{equation}\label{26012901}
\varphi\left(\sum_{i=1}^kx_i+\sum_{i=k+1}^nx_iy_i\right)=\sum_{{i=1}}^\ell x_iy_i.
\end{equation}
Thus
\begin{equation}\label{26012902}
c(\varphi(x_1x_2\cdots x_n))\subseteq\{x_1, y_1,\dots, x_\ell, y_\ell\},
\end{equation}
and so $x_{m+1}\notin c(\varphi(x_1x_2\cdots x_n))$.
From \eqref{26011810} we have that $\varphi(x_1x_2\cdots x_n)=x_1x_2\cdots x_m$.
Combining \eqref{26012901} and \eqref{26012902},
one can deduce that $\ell=m$, $k=0$ and so
\[
\varphi\left(\sum_{i=1}^nx_iy_i\right)=\sum_{{i=1}}^m x_iy_i.
\]
Therefore, $\varphi(x_i)$ is a variable for every $1\leq i\leq n$, which forces $n=m$, a contradiction.

\textbf{Case 2.} The term $\bt$ coincides with $x_iy_ix_{m+1}$. By Claim~\ref{claim555}, $\varphi(x_{n+1})$ is a single variable;
we may assume that $\varphi(x_{n+1})=x_i$.
Then
\[
\varphi\left(\sum_{1\leq i\leq k}x_i+\sum_{k+1\leq i\leq n}x_iy_i\right)=y_ix_{m+1}.
\]
It follows that $c(\varphi(x_1x_2\cdots x_n))\subseteq\{y_i, x_{m+1}\}$,
and so $\varphi(x_1x_2\cdots x_n)\nsubseteq \bu_{m,0}$, contracting \eqref{26011810}.

This completes the proof.
\end{proof}

Let $n\geq 1$ be an integer. Define
\[
\bq_n=\prod_{i=1}^{n+1}x_i.
\]
For any $0\leq k \leq n$,
we denote by $\sigma_{n, k}$ the inequality $\bq_n \preceq \bu_{n, k}$,
and write $\Omega_0$ for the set of all inequalities $\sigma_{n, 0}$ with $n\geq 1$.

\begin{thm}\label{thm26012401}
Let $S$ be a commutative ai-semiring
and $\Sigma$ an equational basis for $S$ that contains an infinite subset of $\Omega_0$.
If $\bu_{m, 0}$ is $\bt$-free for every inequality $\bs\preceq\bt$ in $\Sigma\setminus \Omega_0$
and for every $m\geq 4$, then $S$ is nonfinitely based.
\end{thm}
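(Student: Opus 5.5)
We argue by Lemma~\ref{el26012701}. Suppose $S$ is finitely based. Then $\Sigma$ contains a finite subset $\Sigma_0$ that defines $\mathsf{V}(S)$. Since $\Sigma$ contains infinitely many members of $\Omega_0$, we can choose $m\geq 4$ such that $\sigma_{m,0}\in\Sigma$ and $\sigma_{m,0}\notin\Sigma_0$. We may also take $m$ larger than every $n$ with $\sigma_{n,0}\in\Sigma_0$. As $\sigma_{m,0}\in\Sigma$, the inequality $\bq_m\preceq\bu_{m,0}$, that is, $\bq_m+\bu_{m,0}\approx\bu_{m,0}$, holds in $S$. Hence it is derivable from $\Sigma_0$. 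The aim is to show that every derivation from $\Sigma_0$ starting at $\bu_{m,0}$ cannot move at all, so the derivation would force $\bq_m+\bu_{m,0}=\bu_{m,0}$. This equality is false, because $\bq_m=x_1\cdots x_{m+1}$ is not a word of $\bu_{m,0}$.

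The key step is this: no identity of $\Sigma_0$ can be applied to the term $\bu_{m,0}$. By Lemma~\ref{lem02}, a single step from $\bt_1=\bu_{m,0}$ has the form $\bu_{m,0}=\bp\varphi(\bs)\bq+\br$ with $\bs\approx\bs'$ (or $\bs'\approx\bs$) in $\Sigma_0$. Here $\bp\varphi(\bs)\bq$ is exactly the subterm relation. Write each member of $\Sigma_0$ as an inequality $\bs\preceq\bt$, namely $\bs+\bt\approx\bt$. Applying it in either direction needs a substitution instance of $\bt$ or of $\bs+\bt$ to be a subterm of $\bu_{m,0}$. Since $\bt\leq\bs+\bt$, Lemma~\ref{lem26012701} reduces both directions to $\bu_{m,0}$ being $\bt$-free. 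For $\bs\preceq\bt$ in $\Sigma_0\setminus\Omega_0$ this is the hypothesis. For $\sigma_{n,0}\in\Sigma_0$ we have $\bt=\bu_{n,0}$ with $n\neq m$, and Proposition~\ref{pro26012501} gives that $\bu_{m,0}$ is $\bu_{n,0}$-free. So no rewriting step can start from $\bu_{m,0}$.

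We must still handle that the target is $\bq_m+\bu_{m,0}$ rather than a term containing only $\bu_{m,0}$. Working with the chain $\bt_1=\bu_{m,0},\dots,\bt_n=\bq_m+\bu_{m,0}$ of Lemma~\ref{lem02}, the previous paragraph shows that already $\bt_2$ cannot exist. The chain would therefore have length one, which forces $\bu_{m,0}=\bq_m+\bu_{m,0}$, a contradiction. Hence $\Sigma_0$ does not derive $\sigma_{m,0}$, and $S$ is nonfinitely based.

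The main obstacle is the translation between identities and inequalities in this argument. An element of $\Sigma$ written as $\bs\preceq\bt$ is really the identity $\bs+\bt\approx\bt$, and Lemma~\ref{lem02} can use it in either direction. In one direction the pattern to locate is $\bt$; in the other it is the larger term $\bs+\bt$. Lemma~\ref{lem26012701} covers both cases, since $\bt$ is a subterm of $\bs+\bt$. The remaining care is choosing $m$ to avoid every $n$ occurring in $\Sigma_0\cap\Omega_0$, which is where Proposition~\ref{pro26012501} with $k=0$ and $n\neq m$ applies.
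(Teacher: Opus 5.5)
Your proof is correct and follows the paper's argument. You use compactness, choose $m\geq 4$ beyond every $n$ with $\sigma_{n,0}$ in the finite subset, and combine the hypothesis and Proposition~\ref{pro26012501} with Lemmas~\ref{lem26012701} and~\ref{lem02} to show that no derivation step can start at $\bu_{m,0}$. You also spell out two points the paper leaves implicit: why freeness for the upper side $\bt$ also blocks the side $\bs+\bt$, and why an immovable $\bu_{m,0}$ forces the derivation chain to have length one.
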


\begin{proof}
By hypothesis and the compactness theorem of equational logic,
it is enough to prove that no finite subset of the set $\Sigma$ defines the variety $\mathsf{V}(S)$.

Let $\Sigma'$ be an arbitrary finite subset of $\Sigma$.
Since $\Sigma$ contains an infinite subset of $\Omega_0$, we can choose an integer $m$ such that
\[
m>\max\{n\geq 1 \mid \sigma_{n, 0}\in \Sigma'\}~ \text{and} ~\sigma_{m, 0}\in\Sigma.
\]
Then $\sigma_{m, 0}\notin\Sigma'$.

To prove that $\Sigma'$ can not define the variety $\mathsf{V}(S)$,
it suffices to verify that $\sigma_{m, 0}$ is not derivable from $\Sigma'$.
By hypothesis, together with Lemma~\ref{lem26012701} and Lemma~\ref{lem02},
it is sufficient to show that $\bu_{m, 0}$ is $\bu_{n, 0}$-free for all $1 \leq n<m$,
which follows from Proposition~\ref{pro26012501} immediately.
This completes the proof.
\end{proof}

In the remainder we apply Theorem~\ref{thm26012401} to show that $S_{(4, 545)}$ is nonfinitely based.
To this end, we first establish the following equality of varieties,
which shows that $\mathsf{V}(S_{(4, 545)})$ is the join of two finitely based varieties
$\mathsf{V}(S_{53})$ and $\mathsf{V}(D_2)$.
(For a class $\mathcal{K}$ of ai-semirings,
$\mathsf{V}(\mathcal{K})$ denotes the variety generated by $\mathcal{K}$.)

\begin{pro}\label{pro26011401}
$\mathsf{V}(S_{(4, 545)})=\mathsf{V}(S_{53}, D_2)$.
\end{pro}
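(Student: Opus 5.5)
To prove $\mathsf{V}(S_{(4,545)})=\mathsf{V}(S_{53}, D_2)$ I will establish the two inclusions by purely algebraic means (subalgebras, quotients, subdirect products), so that no identities need to be computed.

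\emph{The inclusion $\supseteq$.} Inspecting Table~\ref{tb545}, the subset $\{\infty,a,1\}$ is closed under $+$ and $\cdot$. Its tables coincide with those of $S_{53}$ (Table~\ref{tb53}), so $S_{53}$ embeds in $S_{(4,545)}$. Next consider the map $S_{(4,545)}\to D_2=\{0,1\}$ sending $\infty,a\mapsto 1$ and $1,0\mapsto 0$. Equivalently, take the partition $\{\{\infty,a\},\{1,0\}\}$ and check it is a congruence. For addition, the chain $0<1<a<\infty$ is split into an up-set and a down-set, so joins are respected. For multiplication, any product with a factor in $\{\infty,a\}$ lies in $\{\infty,a\}$, since $a\cdot 1=a\cdot 0=a$, $a\cdot a=\infty$, and $\infty\cdot x=\infty$. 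Products of elements of $\{1,0\}$ stay in $\{1,0\}$. The quotient is therefore the two-element chain with the top element absorbing multiplicatively, i.e.\ a $2$-element distributive lattice isomorphic to $D_2$. Hence $D_2\in\mathsf{V}(S_{(4,545)})$, which gives $\supseteq$.

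\emph{The inclusion $\subseteq$.} I will show that $S_{(4,545)}$ is a subdirect product, and in fact embeds into $S_{53}\times D_2$. The natural candidate is $\psi(x)=(\alpha(x),\beta(x))$. Here $\beta$ is the quotient map above: $\beta(\infty)=\beta(a)=1$ and $\beta(1)=\beta(0)=0$. The map $\alpha\colon S_{(4,545)}\to S_{53}$ collapses $0$ onto $1$ and fixes $\infty,a,1$. I must check that the partition $\{\{\infty\},\{a\},\{1,0\}\}$ is a congruence. For addition, $0+x$ and $1+x$ agree except when $x=0$, where they give $0$ and $1$, which are in the same class. For multiplication, compare row $0$ with row $1$ of Table~\ref{tb545}: they are $(\infty,a,0,0)$ and $(\infty,a,1,0)$, which agree modulo $\{1,0\}$. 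Injectivity of $\psi$ follows because the only pair identified by $\alpha$, namely $1$ and $0$, is separated by $\beta$? No: $\beta(1)=\beta(0)=0$, so this $\beta$ does not separate them.

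This is the main obstacle, and it needs a different second map. I need a homomorphism to $D_2$ that separates $0$ from $1$, namely the partition $\{\{\infty,a,1\},\{0\}\}$, i.e.\ $0\mapsto 0$ and everything else $\mapsto 1$. The class $\{0\}$ is the bottom of the chain, so addition is respected. For multiplication, products within $\{\infty,a,1\}$ stay there, since $1\cdot 1=1$, $a\cdot 1=a$, and so on. But $1\cdot 0=0$ while $\infty\cdot 0=\infty$ and $a\cdot 0=a$, so $x\cdot 0$ is not well defined modulo the partition. This quotient fails as well.

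I will therefore try the map $\gamma$ with $\infty,a\mapsto$ top and $1\mapsto$ top, $0\mapsto$ bottom, regarded in a two-element semiring where top$\cdot$bottom$=$top. That fails too, since $1\cdot 0=0$. If no homomorphic separation of $1$ and $0$ exists, the fallback is a syntactic argument. I would take a word $\bq$ and a term $\bu$ such that $\bq\preceq\bu$ holds in both $S_{53}$ and $D_2$. By Lemma~\ref{lem01}, some $\bu_i$ satisfies $c(\bu_i)\subseteq c(\bq)$. I would then verify $\bq\preceq\bu$ in $S_{(4,545)}$ under an arbitrary evaluation $\varphi$. If $\varphi(\bq)\in\{\infty,a,1\}$, I use the $S_{53}$ identity via $\alpha$, noting that $\alpha$ is order-preserving and the fibres only merge $0,1$. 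If $\varphi(\bq)=0$, then some variable of $\bq$ takes the value $0$ and all others take values in $\{0,1\}$. Consequently $\varphi(\bu_i)\in\{0,1\}$, and in fact $\varphi(\bu_i)=0$ because $\bu_i$ contains a variable sent to $0$? That holds only if that variable lies in $c(\bu_i)$. Settling this case, by combining $c(\bu_i)\subseteq c(\bq)$ with the $S_{53}$ information under $\alpha$, is the step I expect to be delicate.
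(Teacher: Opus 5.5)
There is a genuine gap. The inclusion $\subseteq$, which is the substance of the statement, is not proved, and your argument for $\supseteq$ also contains an error.

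\textbf{The inclusion $\supseteq$.} Modulo $\{\{\infty,a\},\{1,0\}\}$, the top class is the multiplicative zero and the bottom class is the multiplicative identity. So in the quotient multiplication coincides with addition, and you get $M_2$ (Table~\ref{tbM_2}), not $D_2$. In $D_2=\{e\}^0$ it is the \emph{bottom} element that is multiplicatively absorbing, which is what Lemma~\ref{lem01} encodes. The repair is immediate: $\{1,0\}$ is a subsemiring of $S_{(4,545)}$ isomorphic to $D_2$. Together with $\{\infty,a,1\}\cong S_{53}$, this gives $\supseteq$.

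\textbf{The inclusion $\subseteq$.} You correctly find that no homomorphism into $D_2$ separates $1$ from $0$ (nor does one into $S_{53}$). Hence $S_{(4,545)}$ is not a subalgebra of a product of copies of $S_{53}$ and $D_2$, and some new input is required. Your fallback misplaces the difficulty. The case $\varphi(\bq)=0$ is trivial, because $0$ is the additive least element. The case $\varphi(\bq)=1$, which you assign to $S_{53}$ via $\alpha$, cannot be handled there, since $\alpha$ merges $1$ and $0$. For instance, $x\preceq xy$ holds in $S_{53}$ but fails in $S_{(4,545)}$ at $x\mapsto 1$, $y\mapsto 0$.

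That case is exactly where $D_2$ enters. In $S_{(4,545)}$ a product equals $1$ only if every factor equals $1$, so $\varphi(\bq)=1$ forces every variable of $\bq$ to be sent to $1$. Then the word $\bu_i$ with $c(\bu_i)\subseteq c(\bq)$ given by Lemma~\ref{lem01} evaluates to $1$, so $\varphi(\bu)\geq 1$. The cases $\varphi(\bq)\in\{\infty,a\}$ are handled by $\alpha$ as you say, since its fibres over $\infty$ and $a$ are singletons. With this reassignment your fallback becomes a complete proof.

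\textbf{The paper's route.} The paper uses the other congruence, with block $\{\infty,a\}$. Its quotient is the three-element $S_{43}$ of Table~\ref{tb43}, which does separate $1$ from $0$, so $S_{(4,545)}$ is a subdirect product of $S_{43}$ and $S_{53}$. It then shows $\mathsf{V}(S_{43})=\mathsf{V}(M_2,D_2)$: $M_2$ and $D_2$ embed in $S_{43}$, and $S_{43}$ satisfies the Shao--Ren basis. Finally, $M_2$ embeds in $S_{53}$.

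\textbf{A structural alternative.} Closer to your stated plan, write $D_2=\{0<e\}$. The map $S_{53}\times D_2\to S_{(4,545)}$ given by $(s,d)\mapsto s$ for $s\in\{\infty,a\}$, $(1,e)\mapsto 1$, $(1,0)\mapsto 0$ is a surjective homomorphism. You only examined quotients of $S_{(4,545)}$ itself, whereas what is needed here is a quotient of a product.
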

\begin{proof}
Let $M_2$ denote the $2$-element ai-semiring with Cayley tables shown in Table~\ref{tbM_2},
and let $S_{43}$ denote the $3$-element ai-semiring given in Table~\ref{tb43}.
It is straightforward to check that $S_{(4, 545)}$ is isomorphic to a subdirect product of $S_{43}$ and $S_{53}$
via the congruences defined by the nontrivial blocks $\{\infty, a\}$ and $\{1, 0\}$.
Hence
\[
\mathsf{V}(S_{(4, 545)}) = \mathsf{V}(S_{53}, S_{43}).
\]

Both $M_2$ and $D_2$ embed into $S_{43}$.
Moreover, $S_{43}$ satisfies the following equational basis of $\mathsf{V}(M_2, D_2)$ (see~\cite{sr}):
\[
xy \approx yx, x^2 \approx x, x+yz \approx x+yz+xz+yx.
\]
Thus $\mathsf{V}(S_{43})=\mathsf{V}(M_2, D_2)$,
and so $\mathsf{V}(S_{(4, 545)})=\mathsf{V}(S_{53}, M_2, D_2)$.

Since $M_2$ embeds into $S_{53}$,
we finally obtain
\[
\mathsf{V}(S_{(4, 545)})=\mathsf{V}(S_{53}, D_2).\qedhere
\]
\end{proof}

\begin{cor}\label{coro26012301}
$\mathsf{V}(S_{(4, 545)})$ is a subvariety of $\mathsf{V}(S_{(4, 634)})$.
\end{cor}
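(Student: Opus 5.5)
By Proposition~\ref{pro26011401} we have $\mathsf{V}(S_{(4,545)})=\mathsf{V}(S_{53},D_2)$. It therefore suffices to show that both $S_{53}$ and $D_2$ lie in $\mathsf{V}(S_{(4,634)})$. Once this is done, the join $\mathsf{V}(S_{53},D_2)$ is contained in $\mathsf{V}(S_{(4,634)})$, since the latter is a variety containing both generators.

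The plan has two steps, each of which produces an embedding.

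\textbf{Step 1: $S_{53}$.} We use the isomorphism $S_{(4,634)}\cong S_{53}^0$ noted in the introduction. Concretely, the subset $\{\infty,a,1\}$ of $S_{(4,634)}$ is closed under both operations. Comparing Table~\ref{tb634} restricted to these three elements with Table~\ref{tb53} shows that this subset is exactly $S_{53}$. So $S_{53}$ is a subsemiring of $S_{(4,634)}$, and hence $S_{53}\in\mathsf{V}(S_{(4,634)})$.

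\textbf{Step 2: $D_2$.} We take the subset $\{1,0\}$ of $S_{(4,634)}$. From the tables we read off $1+0=1$, $1\cdot1=1$, and $1\cdot0=0\cdot1=0\cdot0=0$. Thus $\{1,0\}$ is a subsemiring, and it is the $2$-element distributive lattice $D_2$. Alternatively, one could cite \cite[Proposition 1.4]{wrz}, which states that $D_2$ lies in every variety of the form $\mathsf{V}(S^0)$.

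Combining the two steps gives $\mathsf{V}(S_{(4,545)})=\mathsf{V}(S_{53},D_2)\subseteq\mathsf{V}(S_{(4,634)})$. There is no real obstacle here. The only point requiring care is checking that the restricted Cayley tables match exactly, and this is a direct inspection.
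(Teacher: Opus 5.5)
Your proposal is correct and follows the paper's proof. Both reduce, via Proposition~\ref{pro26011401}, to the fact that $S_{(4,634)}\cong S_{53}^0$ contains $S_{53}$ (on $\{\infty,a,1\}$) and a copy of $D_2$ (on $\{1,0\}$) as subsemirings; you just check these embeddings explicitly against the Cayley tables, which the paper leaves implicit.
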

\begin{proof}
Since $S_{(4, 634)}$ is isomorphic to $S^0_{53}$, and $S^0_{53}$ contains $S_{53}$ and a copy of $D_2$,
Proposition~\ref{pro26011401} immediately yields the required result.
\end{proof}

\begin{table}[ht]
\caption{The Cayley tables of $M_2$} \label{tbM_2}
\begin{tabular}{c|cc}
$+$      &$0$&$1$\\
\hline
$0$      &$0$&$1$\\
$1$      &$1$&$1$\\
\end{tabular}\qquad
\begin{tabular}{c|cc}
$\cdot$      &$0$&$1$\\
\hline
$0$      &$0$&$1$\\
$1$      &$1$&$1$\\
\end{tabular}
\end{table}

\begin{table}[ht]
\caption{The Cayley tables of $S_{43}$} \label{tb43}
\begin{tabular}{c|ccc}
$+$      &$\infty$&$a$&$1$\\
\hline
$\infty$ &$\infty$&$\infty$&$\infty$\\
$a$      &$\infty$&$a$&$a$\\
$1$      &$\infty$&$a$&$1$\\
\end{tabular}\qquad
\begin{tabular}{c|ccc}
$\cdot$  &$\infty$&$a$&$1$\\
\hline
$\infty$ &$\infty$&$\infty$&$\infty$\\
$a$      &$\infty$&$a$&$1$\\
$1$      &$\infty$&$1$&$1$\\
\end{tabular}
\end{table}

The following result completely describes the identities of $S_{53}$.
\begin{lem}\label{lem5303}
Let $\bq\preceq\bu$ be a nontrivial inequality such that
$\bu=\bu_1+\bu_2+\cdots+\bu_n$ and $\bu_i, \bq \in X^+_c$ for all $1\leq i \leq n$.
Then $\bq\preceq\bu$ holds in $S_{53}$
if and only if $L_{\geq 2}(\bu)\neq \emptyset$, $c(\bq)\subseteq c(\bu)$,
and for every $\bw\in S_2(\bq)$ there exists $\bw'\in S_2(\bu)$ such that $c(\bw')\subseteq c(\bw)$.
\end{lem}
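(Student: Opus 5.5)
We prove both directions by evaluating $\bq$ and $\bu$ in $S_{53}$, which has elements $\infty,a,1$. Addition is a chain $1<a<\infty$, and multiplication is commutative with $1$ as identity, $a^2=\infty$, and $\infty$ absorbing. The key observation is how a word evaluates under an assignment $\varphi\colon X\to S_{53}$. Let $A=\varphi^{-1}(a)$ and $I=\varphi^{-1}(\infty)$. A word $\bw$ then evaluates to
\[
\varphi(\bw)=
\begin{cases}
1 & \text{if } c(\bw)\cap(A\cup I)=\emptyset,\\
a & \text{if } c(\bw)\cap I=\emptyset \text{ and } \bw \text{ has exactly one occurrence of a letter in } A,\\
\infty & \text{otherwise.}
\end{cases}
\]
A sum takes the maximum. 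So $\bq\preceq\bu$ fails exactly when some assignment makes $\varphi(\bq)$ strictly larger than every $\varphi(\bu_i)$.

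\textbf{Necessity.} We argue by contrapositive, one condition at a time.

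If $L_{\geq2}(\bu)=\emptyset$, then every $\bu_i$ is a single variable. Set all variables of $c(\bu)$ to $a$, and set the remaining variables of $\bq$ to $a$ as well if any exist. Then $\varphi(\bu)=a$. Nontriviality, i.e. $\bq\notin\bu$, forces $\ell(\bq)\geq 2$ or $\bq$ a variable outside $c(\bu)$. In the first case $\bq$ contains two letters from $A$, so $\varphi(\bq)=\infty$. In the second case, send that variable to $\infty$ and everything else to $1$.

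If some $x\in c(\bq)\setminus c(\bu)$, put $\varphi(x)=\infty$ and all other variables $1$. Then $\varphi(\bq)=\infty$ and $\varphi(\bu)=1$.

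If some $\bw=xy\in S_2(\bq)$ (possibly $x=y$) has no $\bw'\in S_2(\bu)$ with $c(\bw')\subseteq\{x,y\}$, put $\varphi(x)=\varphi(y)=a$ and all other variables $1$. Then $\varphi(\bq)=\infty$. Each $\bu_i$ contains at most one occurrence of a letter from $\{x,y\}$, so $\varphi(\bu_i)\le a$.

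\textbf{Sufficiency.} Assume the three conditions and fix $\varphi$; we show $\varphi(\bq)\le\varphi(\bu)$ by cases on $\varphi(\bq)$.

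If $\varphi(\bq)=1$, there is nothing to show.

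If $\varphi(\bq)=a$, some letter of $\bq$ lies in $A$. By $c(\bq)\subseteq c(\bu)$, it occurs in some $\bu_i$, and then $\varphi(\bu_i)\ge a$.

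If $\varphi(\bq)=\infty$, there are two subcases. If $\bq$ contains a letter of $I$, then some $\bu_i$ contains it, giving $\varphi(\bu_i)=\infty$. Otherwise $\bq$ has at least two occurrences of letters from $A$; choose two such occurrences. Commutativity lets us treat them as a length-2 subword $\bw$ of a rearrangement of $\bq$, and since $S_2$ is defined on commutative words this is legitimate. The hypothesis gives $\bw'\in S_2(\bu)$ with $c(\bw')\subseteq c(\bw)\subseteq A$. Either $\bw'$ is a square of a letter in $A$, or it has two distinct letters of $A$; in both cases the word of $\bu$ containing $\bw'$ evaluates to $\infty$.

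The condition $L_{\ge2}(\bu)\neq\emptyset$ is not needed for sufficiency beyond consistency; it is implied when $\bq$ has length at least $2$.

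\textbf{Main obstacle.} The delicate point is interpreting $S_2$ in the commutative setting, so that "two occurrences of letters from $A$" really yields an element of $S_2(\bq)$. We handle this by reading words in $X_c^+$ as multisets, so that $S_2(\bq)$ consists of all 2-element submultisets. The remaining check, which is routine, is that the nontriviality case analysis in the necessity of $L_{\ge2}$ covers every case, including $\bq$ a single variable lying in $c(\bu)$ but not in $\bu$.
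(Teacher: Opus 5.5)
Your proof is correct. The sufficiency half follows the same argument as the paper's, only organized by the value of $\varphi(\bq)$ instead of $\varphi(\bu)$. Both versions use the same two facts:
\begin{enumerate}
\item[(i)] $c(\bq)\subseteq c(\bu)$ takes care of letters of $\bq$ sent to $a$ or to $\infty$;
\item[(ii)] the $S_2$ condition takes care of two occurrences in $\bq$ sent to $a$, because it yields a word of $\bu$ containing two occurrences of letters sent to $a$.
\end{enumerate}

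The real difference is in necessity. The paper does not prove that direction but imports it from Yue et al.\ (Lemma~3.7 there). Your three explicit separating assignments make the lemma self-contained at essentially no extra cost:
\begin{enumerate}
\item[(a)] send everything in $c(\bu)\cup c(\bq)$ to $a$;
\item[(b)] send a letter of $c(\bq)\setminus c(\bu)$ to $\infty$ and everything else to $1$;
\item[(c)] send the letters of an uncovered $\bw\in S_2(\bq)$ to $a$ and everything else to $1$.
\end{enumerate}
They also show that the condition $L_{\geq 2}(\bu)\neq\emptyset$ is redundant given nontriviality and the other two conditions. This holds when $\ell(\bq)\geq 2$ via $S_2$, as you note, and also when $\ell(\bq)=1$ via $c(\bq)\subseteq c(\bu)$.

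The case you flag at the end as still to be checked is vacuous. If $L_{\geq 2}(\bu)=\emptyset$, the words of $\bu$ are exactly the letters of $c(\bu)$. So a one-letter $\bq$ with $c(\bq)\subseteq c(\bu)$ would lie in $\bu$, contradicting nontriviality. Your sentence ``nontriviality forces $\ell(\bq)\geq 2$ or $\bq$ a variable outside $c(\bu)$'' already settles this.
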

\begin{proof}
The necessity follows from Yue et al.~\cite[Lemma 3.7]{yrzs}.
For sufficiency,
let $\varphi\colon P_f(X) \to S_{53}$ be an arbitrary homomorphism.

If $\varphi(\bu)=\infty$, then $\varphi(\bq)\leq \varphi(\bu)$,
since $\infty$ is the additive top element.

If $\varphi(\bu)=1$, then $\varphi(x)=1$ for all $x\in c(\bu)$,
since $1$ is the additive least element and its only multiplicative factor is itself.
As $c(\bq)\subseteq c(\bu)$, the same holds for all variables in $c(\bq)$;
hence $\varphi(\bq)=1$ and so $\varphi(\bq)\leq \varphi(\bu)$.

If $\varphi(\bu)=a$, then $\varphi(x)\in \{1, a\}$ for all $x\in c(\bu)$.
Indeed, if there is $x\in c(\bu)$ such that $\varphi(x)=\infty$,
then because $\infty$ is the multiplicative zero and the additive top element,
we would obtain $\varphi(\bu)=\infty$, contradicting the assumption $\varphi(\bu)=a$.
Since $c(\bq)\subseteq c(\bu)$, the same holds for all variables in $c(\bq)$.
Suppose for contradiction that $\varphi(\bq)=\infty$.
Then there exists $xy\in S_2(\bq)$ such that $\varphi(x)=\varphi(y)=a$.
(Here the reasoning is that in $S_{53}$, the product of finitely many elements,
each equal to $a$ or $1$, yields $\infty$ precisely when at least two factors equal to $a$.)
By hypothesis, there exists $\bp\in S_2(\bu)$ such that $c(\bp)\subseteq \{x, y\}$,
and so $\varphi(\bp)=aa=\infty$.
This implies that $\varphi(\bu)=\infty$, a contradiction.
Thus $\varphi(\bq)\in \{1, a\}$ and so $\varphi(\bq)\leq \varphi(\bu)$.

Therefore, $\bq\preceq\bu$ holds in $S_{53}$.
\end{proof}

We now present an infinite equational basis for $S_{(4, 545)}$.
\begin{pro}\label{pro54501}
$\mathsf{V}(S_{(4, 545)})$ is the commutative ai-semiring variety defined by the identities
\begin{align}
&x^3 \approx x^2; \label{54501}\\
&x \preceq  x^2; \label{54503}\\
&xy \preceq x+y^2z;\label{54504}\\
&\sigma_{n,k} \quad (n \geq 1, ~0\leq k\leq n).\label{54505}
\end{align}
\end{pro}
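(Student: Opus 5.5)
Let $\mathcal{W}$ be the commutative ai-semiring variety defined by \eqref{54501}--\eqref{54505}. By Proposition~\ref{pro26011401}, $\mathsf{V}(S_{(4,545)})=\mathsf{V}(S_{53},D_2)$. The proof therefore has two halves.

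\textbf{Soundness.} We check that $S_{53}$ and $D_2$ both satisfy every listed identity. For $D_2$, Lemma~\ref{lem01} applies directly:
\begin{itemize}
\item $x^3\approx x^2$ and $x\preceq x^2$ are trivial, since $D_2$ is a lattice;
\item for $xy\preceq x+y^2z$, the summand $x$ has content inside $c(xy)$;
\item for $\sigma_{n,k}$, the summand $x_1\cdots x_n$ has content inside $c(\bq_n)$.
\end{itemize}
For $S_{53}$ we use Lemma~\ref{lem5303}. Identity \eqref{54501} holds directly from the table. For $\sigma_{n,k}$, every length-2 subword $x_ix_j$ of $\bq_n$ must be dominated by a length-2 subword of $\bu_{n,k}$ with content contained in $\{x_i,x_j\}$:
\begin{itemize}
\item if $i,j\le n$, use $x_ix_j$, a subword of $x_1\cdots x_n$ (this needs $n\ge2$; when $n=1$ use $x_1x_2$ from the second sum);
\item if $j=n+1$, use $x_ix_{n+1}$, which occurs in $x_ix_{n+1}$ or in $x_iy_ix_{n+1}$.
\end{itemize}
The content and length conditions are clear. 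For \eqref{54504}, the subword $xy$ is dominated by $y^2\in S_2(y^2z)$.

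\textbf{Completeness.} Let $\bq\preceq\bu$ be an inequality, with $\bq$ a word, that holds in both $S_{53}$ and $D_2$. We must derive it from the list.
\begin{enumerate}
\item Use \eqref{54501} and \eqref{54503} to reduce exponents. Every variable may be taken to occur at most twice, and we may pass between $x$ and $x^2$ in the upper direction.
\item By Lemma~\ref{lem01}, $\bu$ has a summand $\bu_0$ with $c(\bu_0)\subseteq c(\bq)$.
\item By Lemma~\ref{lem5303}, every $xy\in S_2(\bq)$ has a witness $\bw'\in S_2(\bu)$ with $c(\bw')\subseteq\{x,y\}$. Such a witness is either $xy$, $x^2$ or $y^2$, and it sits inside some word $\bu_i$.
\item If some witness is a square $y^2$ inside $\bu_i=y^2\bz$, apply \eqref{54504} with $x:=\bu_0$ (or a suitable subword). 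This lets us add the summands needed for $\bq$ to the upper side.
\item Derive a sum of terms $\bu_0\cdot(\text{stuff})$, then assemble $\bq$.
\end{enumerate}
The core idea in the last step: write $c(\bq)$ as $\{x_1,\dots,x_N\}$, where each $x_j$ lies outside $c(\bu_0)$ and is linked to $\bu_0$ through a witness $x_jx_{i}$ in some $\bu_{r}$. Substituting into $\sigma_{n,k}$, with $x_1\cdots x_n\mapsto\bu_0$-type products and $x_{n+1}\mapsto x_j$, and with the $y_i$ mapped to the cofactors of the witnessing words, gives $\bq'\preceq\bu$ for progressively larger $\bq'$.

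\textbf{Main obstacle.} The difficulty is organizing this induction so that:
\begin{itemize}
\item each newly added variable of $\bq$, together with all its pairwise constraints against earlier variables, is covered by a single instance of $\sigma_{n,k}$;
\item the cases $k$ versus $n-k$ correspond to witnesses that are exact two-letter words versus longer words $x_ix_{n+1}y_i$.
\end{itemize}
To handle pairs $x_ix_j$ among the product variables themselves, I would first try an induction on $|c(\bq)\setminus c(\bu_0)|$, combined with \eqref{54504} for the square-witness pairs.

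The soundness half is routine. The completeness half uses only the deduction steps of Lemma~\ref{lem02}, carried out as an explicit syntactic derivation.
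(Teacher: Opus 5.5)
Your soundness half is fine. You skip \eqref{54503} for $S_{53}$, but it follows at once from Lemma~\ref{lem5303}. Your plan for completeness is the paper's plan: start from the $D_2$-witness $\bu_0$, enlarge it one variable at a time by instances of $\sigma_{n,k}$, and use \eqref{54504} for square witnesses. The gap is that the completeness argument is never carried out. You list its organization as the main obstacle, and the one concrete device you give fails in general. Suppose $x_1\cdots x_t\preceq\bu$ has been derived and $x_{t+1}\in c(\bq)$ is the next variable. Suppose also that the only witness for the pair $x_sx_{t+1}$ ($s\le t$) is $x_s^2$, sitting in a word $x_s^2\bw\in\bu$. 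Applying \eqref{54504} with $x$ equal to $\bu_0$, or to anything built from the already-collected variables, produces only words with content in $\{x_1,\dots,x_t\}$. It never produces a word containing $x_sx_{t+1}$, which is exactly what the instance of $\sigma_{t,k}$ needs. Your choice helps only when the square falls on the new variable $x_{t+1}$; in that case taking $x\mapsto x_1\cdots x_t$ finishes the step outright.

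The missing idea is to normalize all pairs first, choosing $x$ in \eqref{54504} to be a word of $\bu$ that contains the \emph{other} variable. Since $c(\bq)\subseteq c(\bu)$, there is a word $x_{t+1}\bw'\in\bu$. Apply \eqref{54504} with $x\mapsto x_{t+1}\bw'$, $y\mapsto x_s$, $z\mapsto\bw$; if $\bw$ is empty, first replace $x_s^2$ by $x_s^3$ via \eqref{54501}. This gives $\bu\succeq x_sx_{t+1}\bw'$. Thus every pair of distinct variables $x_i,x_j$ of $\bq$ yields $\bu\succeq x_ix_j\bp_{ij}$ for some $\bp_{ij}\in X^*$, and the induction becomes mechanical. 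Linearize $\bu_0$ with \eqref{54501} and \eqref{54503}. Then use $\sigma_{t,k}$ with $y_s\mapsto\bp_{s,t+1}$, indexing so that the $k$ empty cofactors come first. Pairs among already-collected variables need no treatment, since they lie in one derived word, so the extra induction you propose for them is unnecessary. Finally, your step 1 only lowers exponents and never produces the squares that $\bq$ itself may contain. Once the linear word on $c(\bq)$ is reached, each $v^2\in S_2(\bq)$ forces a word $v^2\bw\in\bu$, since that is its only possible witness. Then \eqref{54504} with $x$ mapped to the current word and $y\mapsto v$ adds a factor $v$, after which \eqref{54501} reaches $\bq$.
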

\begin{proof}
It is a routine matter to verify that $S_{(4, 545)}$ satisfies the identities \eqref{54501}--\eqref{54504}.
By Proposition~\ref{pro26011401} together with Lemmas~\ref{lem01} and \ref{lem5303},
$S_{(4, 545)}$ satisfies the inequalities in \eqref{54505}.
It remains to show that every inequality that holds in $S_{(4, 545)}$
is derivable from \eqref{54501}--\eqref{54505}.
Let $\bq\preceq \bu$ be such a nontrivial inequality, where
$\bu=\bu_1+\bu_2+\cdots+\bu_n$ and $\bu_i, \bq \in X_c^+$, $1 \leq i \leq n$.
By Lemmas \ref{lem01} and \ref{lem5303},
$c(\bu_i)\subseteq c(\bq)$ for some $\bu_i\in\bu$,
$L_{\geq 2}(\bu)\neq \emptyset$, $c(\bq)\subseteq c(\bu)$,
and for any $\bw\in S_2(\bq)$, there exists $\bw'\in S_2(\bu)$ such that $c(\bw')\subseteq c(\bw)$.

\textbf{Case 1.} $\ell(\bq)=1$. Then $\bq=x$ for some $x\in X$. So there exists $\bu_i\in \bu$ such that $\bu_i=x^k$ for some $k\geq2$.
Now we have
\[
\bu \succeq \bu_i\approx x^k\stackrel{\eqref{54501}}\approx x^2\stackrel{\eqref{54503}}\succeq x=\bq.
\]
This derives the inequality $\bu \succeq \bq$.

\textbf{Case 2.} $\ell(\bq)\geq2$.
Then $\bq=x_1x_2\cdots x_n$ for some $n\geq 2$ and $x_1, x_2, \ldots, x_n \in X$.
For any $x_ix_j\in S_2(\bq)$, it follows from Lemma \ref{lem5303} that
there exists $\bw_{ij}\in S_2(\bu)$ such that $c(\bw_{ij})\subseteq \{x_i, x_j\}$, and so $\bw_{ij}=x_i^2$ or $x_j^2$ or $x_ix_j$.
If $\bw_{ij}=x_i^2$, then there exists $\bu_{ij} \in \bu$ such that
$\bu_{ij}=x_i^2\bu_{ij}'$ for some $\bu_{ij}'\in X^*$.
Since $c(\bq)\subseteq c(\bu)$, we have that there exists $\bu_\ell \in \bu$ such that $\bu_\ell=x_j\bu_\ell'$ for some $\bu_\ell'\in X^*$,
and so
\[
\bu \succeq \bu_\ell+\bu_{ij}=x_j\bu_\ell'+x_i^2\bu_{ij}'
\stackrel{\eqref{54504}}\succeq x_j\bu_\ell'x_i=x_ix_j\bu_\ell'.
\]
This implies the inequality $\bu\succeq x_ix_j\bu_\ell'$.
If $\bw_{ij}=x_j^2$, then we can derive $\bu \succeq x_ix_j\bu_k'$ for some $\bu_k'\in X^*$ by a similar way.
If $\bw_{ij}=x_ix_j$, then there exists $\bu_r\in \bu$ such that $\bu_r=x_ix_j\bu_r'$ for some $\bu_r'\in X^*$,
and so $\bu \succeq x_ix_j\bu_r'$ is derived.
Consequently, for any $x_ix_j\in S_2(\bq)$, one can always obtain the inequality
$\bu\succeq x_ix_j\bp_{ij}$ for some $\bp_{ij}\in X^*$.
Since $c(\bu_i)\subseteq c(\bq)$, we may assume that $\bu_i=x_1x_2\cdots x_t$ for some $t<n$.
By the preceding argument, for any $1\leq s\leq t$,
$x_sx_{t+1}\bp_{s, t+1}\in\bu$ for some $\bp_{s, t+1}\in X^*$.
Now we have
\[
\bu\succeq x_1x_2\cdots x_t+x_1x_{t+1}\bp_{_{1,t+1}}+\cdots+x_tx_{t+1}\bp_{_{t,t+1}}\stackrel{\eqref{54505}}\succeq x_1x_2\cdots x_{t+1}.
\]
This implies the inequality $\bu\succeq x_1x_2\cdots x_{t+1}$.
Repeat this process one can finally derive the inequality $\bu \succeq x_1x_2\cdots x_n=\bq$.
\end{proof}

\begin{cor}\label{pro01}
$\mathsf{V}(S_{(4, 545)})$ contains $S_7$.
\end{cor}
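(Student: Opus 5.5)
The quickest route is to use the equational basis of Proposition~\ref{pro54501}. To show that $S_7\in\mathsf{V}(S_{(4,545)})$, it is enough to check that $S_7$ satisfies every identity in \eqref{54501}--\eqref{54505}; since these define $\mathsf{V}(S_{(4,545)})$, this places $S_7$ in the variety. We have $S_7$ commutative by its Cayley table, and I would verify the four families one at a time. The argument uses the following structure of $S_7$. The element $\infty$ is absorbing for both operations. The sum of two distinct elements is $\infty$. We have $a^2=\infty$, and $1$ is the multiplicative identity. So for a word $\bw$ and an assignment $\varphi$ into $S_7$: $\varphi(\bw)=1$ iff every variable goes to $1$; $\varphi(\bw)=a$ iff exactly one occurrence goes to $a$ and the others go to $1$; otherwise $\varphi(\bw)=\infty$. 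A sum has value $v\neq\infty$ iff every summand has value $v$.

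For \eqref{54501}, $x^3$ and $x^2$ both evaluate to $x$ when $x\in\{1,\infty\}$ and to $\infty$ when $x=a$, so the identity holds. For \eqref{54503}, we need $x+x^2=x^2$, which holds because $x\le x^2$ in every case: $1\le 1$, $a\le\infty$, $\infty\le\infty$. For \eqref{54504}, we need $xy\le x+y^2z$. If the right side is $\infty$ the inequality is trivial. Otherwise $x=y^2z\in\{1,a\}$. Then $y^2\neq\infty$ forces $y=1$, so $xy=x$, and the inequality holds.

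The main work is \eqref{54505}, that is, $\bq_n=x_1\cdots x_{n+1}\le \bu_{n,k}$. Suppose $\varphi(\bu_{n,k})=v\neq\infty$, so every summand equals $v$. If $v=1$, every variable occurring in $\bu_{n,k}$ is $1$; this covers $x_1,\dots,x_{n+1}$, so $\varphi(\bq_n)=1$. If $v=a$, then $\varphi(x_1\cdots x_n)=a$, so exactly one $x_i$ with $i\le n$ equals $a$ and the remaining $x_j$, $j\le n$, equal $1$. The summand $x_ix_{n+1}$ (or $x_ix_{n+1}y_i$) must also have value $a$, so $x_{n+1}=1$ (and $y_i=1$). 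Hence $\varphi(\bq_n)=a$. In every case $\varphi(\bq_n)\le\varphi(\bu_{n,k})$.

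The only step requiring care is the case $v=a$ in \eqref{54505}, where the conclusion depends on the distinguished summand $x_1\cdots x_n$. As a quick cross-check I could note that $S_7$ is not a subalgebra of $S_{(4,545)}$: the only candidate $\{\infty,a,1\}$ has $a+1=a$ there, not $\infty$. So membership must come from the basis (or from an explicit quotient of a subalgebra of a power), and the verification above supplies exactly that.
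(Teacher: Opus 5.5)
Your proof is correct and follows the paper's route: you check that $S_7$ (which is commutative) satisfies the basis \eqref{54501}--\eqref{54505} of Proposition~\ref{pro54501}, and you verify \eqref{54501}--\eqref{54504} directly, as the paper does. The only difference is in the check of $\sigma_{n,k}$: you evaluate directly in $S_7$, whereas the paper cites the characterization ($c(\bq)\subseteq c(\bu)$ and $\delta(\bu)\subseteq\delta(\bu+\bq)$) from \cite[Proposition 5.5]{gmrz}, so your argument is self-contained and equally valid.
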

\begin{proof}
By Proposition~\ref{pro54501},
it is enough to prove that $S_7$ satisfies the identities \eqref{54501}--\eqref{54505}.
One can directly check that the identities \eqref{54501}--\eqref{54504} hold in $S_7$;
we therefore focus on the the inequalities in \eqref{54505}.

From \cite[Proposition 5.5]{gmrz},
an inequality $\bq\preceq\bu$ holds in $S_7$
if and only if $c(\bq)\subseteq c(\bu)$ and $\delta(\bu)\subseteq\delta(\bu+\bq)$.
Let $n \geq 1$, $0\leq k\leq n$.
It is evident that $c(\bq_n)\subseteq c(\bu_{n,k})$.
The inclusion $\delta(\bu_{n,k})\subseteq\delta(\bu_{n,k}+\bq_n)$ is checked by considering three cases:
\begin{itemize}
\item If $k\geq 2$, then $\delta(\bu_{n,k})=\emptyset$.

\item If $k=1$, then
$
\delta(\bu_{n,k})=\{\{x_1, y_{2}, \ldots, y_n\}\}.
$

\item If $k=0$, then
\[
\delta(\bu_{n,k})=\{\{y_1, \ldots, y_{i-1}, x_i, y_{i+1},\ldots, y_n\}\mid 1\leq i \leq n\}.
\]
\end{itemize}
In every case we have that $\delta(\bu_{n,k})\subseteq\delta(\bu_{n,k}+\bq_n)$.
Thus the inequality \eqref{54505} is satisfied by $S_7$.
\end{proof}

\begin{cor}\label{coro01}
$\mathsf{V}(S_{(4, 545)})$ has $2^{\aleph_0}$ distinct subvarieties.
\end{cor}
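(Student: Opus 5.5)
The plan is to realize $2^{\aleph_0}$ subvarieties by adding subsets of $\Omega_0$ to a fixed basis. By Corollary~\ref{pro01}, $S_7\in\mathsf{V}(S_{(4,545)})$. The known facts about $S_7$ are Proposition~5.5 of \cite{gmrz} and the continuum-many-varieties results of \cite{gjrz2}. The most direct route, though, is to work inside $\mathsf{V}(S_{(4,545)})$ using the basis of Proposition~\ref{pro54501}. Let $\Sigma_0$ consist of \eqref{54501}--\eqref{54504} together with all $\sigma_{n,k}$ with $k\geq 1$. For an infinite set $I\subseteq\{n\geq 4\}$, let $\mathcal{V}_I$ be the variety defined by
\[
\Sigma_0\cup\{\sigma_{n,0}\mid n\in I\}\cup\{\sigma_{n,0}\mid 1\leq n\leq 3\}.
\]
Each $\mathcal{V}_I$ contains $\mathsf{V}(S_{(4,545)})$, which is the wrong direction, so I would reverse the construction. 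Take instead
\[
\mathcal{W}_I=\mathsf{V}(S_{(4,545)})\cap\mathcal{V}',
\]
where $\mathcal{V}'$ is defined only by $\Sigma_0$ together with the $\sigma_{n,0}$ for $n\notin I$. That intersection is trivial, so this approach fails as well. The correct object is the variety $\mathcal{U}$ defined by $\Sigma_0$ and $\{\sigma_{n,0}\mid n\leq 3\}$ alone. Its subvarieties $\mathcal{U}_I$ (adding $\sigma_{n,0}$ for $n\in I$) form $2^{\aleph_0}$ distinct varieties, all lying in the interval $[\mathsf{V}(S_{(4,545)}),\mathcal{U}]$.

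Since the corollary asks for subvarieties of $\mathsf{V}(S_{(4,545)})$, the plan must instead produce, for each $I$, an algebra in $\mathsf{V}(S_{(4,545)})$. So I would go through $S_7$ and import \cite{gjrz2}. There it is shown that $\mathsf{V}(S_7)$ (indeed already the interval below $\mathsf{V}(S_7^0)$, and in particular subvarieties of $\mathsf{V}(B_2^1)$ containing certain pieces) has $2^{\aleph_0}$ subvarieties. Equivalently, the earlier work of Jackson et al.\ \cite{jrz} and Gao et al.\ \cite{gjrz2} yields uncountably many subvarieties of $\mathsf{V}(S_7)$. Concretely, the key step is this: for each $I$, define $\mathcal{X}_I=\mathsf{V}(S_7)\wedge[\![\sigma'_{n}\mid n\in I]\!]$, where the $\sigma'_n$ are identities satisfied by the members of a family of finite ai-semirings $A_m\in\mathsf{V}(S_7)$ and chosen so that $A_m\models\sigma'_n$ iff $m\neq n$. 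Then $A_m\in\mathcal{X}_I$ iff $m\notin I$, so $I\mapsto\mathcal{X}_I$ is injective. Since $\mathsf{V}(S_7)\subseteq\mathsf{V}(S_{(4,545)})$ by Corollary~\ref{pro01}, these are all subvarieties of $\mathsf{V}(S_{(4,545)})$.

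The main obstacle is the existence of the separating family $A_m$ with identities $\sigma'_n$, and I would rely on the literature for it. The first thing to try is to quote directly that $\mathsf{V}(S_7)$ has continuum many subvarieties; this follows from \cite{gjrz2} or \cite{jrz}, which construct such families via flat extensions of words or via Brandt-type monoid semirings inside $\mathsf{V}(B_2^1)$. If only the interval statement $|[\mathsf{V}(S_7),\mathsf{V}(S_7^0)]|=2^{\aleph_0}$ were available, it would not suffice, because $S_7^0\notin\mathsf{V}(S_{(4,545)})$. In that case I would fall back on verifying directly, with the description of identities of $S_7$ from \cite[Proposition~5.5]{gmrz}, that suitable finite semirings built from the words $\bq_m$ lie in $\mathsf{V}(S_7)$ and are separated by the $\sigma_{n,0}$.
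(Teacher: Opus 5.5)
Your operative argument is exactly the paper's proof: Corollary~\ref{pro01} gives $\mathsf{V}(S_7)\subseteq\mathsf{V}(S_{(4,545)})$, and $\mathsf{V}(S_7)$ is known to have $2^{\aleph_0}$ subvarieties; the earlier detours through $\Omega_0$, the intersections $\mathcal{W}_I$, and the hypothetical separating family $A_m$ can simply be deleted. The only correction is the source: the paper takes the continuum-many-subvarieties result for $\mathsf{V}(S_7)$ from \cite{gmrz}, not from \cite{gjrz2} (which concerns the interval above $\mathsf{V}(S_7)$ and, as you rightly note, would not suffice), so your hedge and fallback are unnecessary.
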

\begin{proof}
This follows from Corollary~\ref{pro01} together with one of the main results of \cite{gmrz},
which states that $\mathsf{V}(S_7)$ has $2^{\aleph_0}$ distinct subvarieties.
\end{proof}

\begin{thm}\label{thm545}
The ai-semiring $S_{(4, 545)}$ is nonfinitely based.
\end{thm}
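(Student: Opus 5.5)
We apply Theorem~\ref{thm26012401} to the equational basis $\Sigma$ of $\mathsf{V}(S_{(4,545)})$ from Proposition~\ref{pro54501}, namely \eqref{54501}--\eqref{54505} together with commutativity. This $\Sigma$ contains all of $\Omega_0$, since $\sigma_{n,0}$ lies in \eqref{54505} for every $n\geq 1$. So it suffices to check that $\bu_{m,0}$ is $\bt$-free for every $m\geq 4$ and every inequality $\bs\preceq\bt$ in $\Sigma\setminus\Omega_0$.

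Each identity is first rewritten as inequalities $\bs\preceq\bt$ in both directions, as the paper does. The identity \eqref{54501} gives $x^3\preceq x^2$ and $x^2\preceq x^3$. Then $\bt=x^2$ or $\bt=x^3$, and in either case $\bt$ has $x^2$ as a subterm. By Lemma~\ref{lem26011801}(d), $\bu_{m,0}$ has no subterm of the form $\varphi(x)^2$, so it is $x^2$-free. Lemma~\ref{lem26012701} then shows it is $\bt$-free.

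The same argument handles \eqref{54503}, whose upper side is $x^2$. For \eqref{54504} the upper side is $x+y^2z$, which contains the subterm $y^2$, so again Lemma~\ref{lem26012701} together with Lemma~\ref{lem26011801}(d) applies. If commutativity $xy\approx yx$ is counted as a basis identity, its sides $xy$ and $yx$ coincide in $X_c^+$, so it is trivial and can be dropped, since we work with commutative terms throughout.

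The remaining inequalities are the $\sigma_{n,k}$ with $k\geq 1$, whose upper sides are $\bu_{n,k}$. Proposition~\ref{pro26012501} states exactly that $\bu_{m,0}$ is $\bu_{n,k}$-free for all $m\geq 4$, all $n\geq 1$ and all $1\leq k\leq n$. The restriction $k\geq1$ when $n=m$ is satisfied here, because $\sigma_{m,0}\in\Omega_0$ is excluded.

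The one point needing care is the interpretation of Lemma~\ref{lem02} in Theorem~\ref{thm26012401}. The identity $\bs\approx\bt$ with $\bs\preceq\bt$ means $\bs+\bt\approx\bt$, so the relevant sides are $\bs+\bt$ and $\bt$. Both contain $\bt$ as a subterm, so by Lemma~\ref{lem26012701} it suffices to know that $\bu_{m,0}$ is $\bt$-free. With all hypotheses verified, Theorem~\ref{thm26012401} yields that $S_{(4,545)}$ is nonfinitely based.
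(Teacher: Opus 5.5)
Your proposal is correct and matches the paper's proof: you apply Theorem~\ref{thm26012401} to the basis of Proposition~\ref{pro54501}, obtain freeness for $x^2$, $x^3$ and $x+y^2z$ from Lemma~\ref{lem26011801}(d) via Lemma~\ref{lem26012701}, and obtain freeness for $\bu_{n,k}$ with $k\geq 1$ from Proposition~\ref{pro26012501}. Your remarks on commutativity, and on why freeness for the upper side $\bt$ covers both sides $\bs+\bt$ and $\bt$, simply make explicit what the paper leaves implicit.
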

\begin{proof}
By Proposition~\ref{pro54501},
\[
\Sigma=\{\eqref{54501}, \eqref{54503}, \eqref{54504}\}\cup\{\sigma_{n, k} \mid n\geq 1, 0\leq k\leq n\}
\]
is an equational basis for the commutative ai-semiring $S_{(4, 545)}$, which contains $\Omega_0$.
By Theorem~\ref{thm26012401}, to prove that $S_{(4, 545)}$ is nonfinitely based,
it suffices to show that for any $m\geq 4$, $\bu_{m, 0}$ is $\bw$-free for every term $\bw$ in the set
\[\{x^3, x^2, x+y^2z\}\cup\{\bu_{n, k}\mid n\geq 1, 1\leq k\leq n\}.
\]
Indeed, the required freeness follows from Lemma~\ref{lem26011801}(d) for $\mathbf{w}\in\{x^3,x^2,x+y^2z\}$,
and from Proposition~\ref{pro26012501} for $\mathbf{w}=\mathbf{u}_{n,k}$ ($n\geq1$, $1\leq k\leq n$).
\end{proof}

\section{Equational basis for $S_{(4, 634)}$}\label{sec5}
In this section, we give another sufficient condition for an ai-semiring to be nonfinitely based.
Applying this condition, we prove that $S_{(4, 634)}$ likewise has no finite equational basis.

We continue to use the notation of Section~\ref{sec4}.
In particular, for each integer $n \geq 1$,
\[
\mathbf{u}_{n,n}=\prod_{i=1}^n x_i+\sum_{i=1}^n x_ix_{n+1}.
\]

For each integer $n\geq 2$, let $\Theta_n$ denote the set of all terms of the form
\[
\sum_{1\leq i<j\leq n+1}x_ix_j\bw_{ij},
\]
where $c(\bw_{ij})\subseteq\{x_1, x_2,\dots,x_{n+1}\}\backslash\{x_i, x_j\}$
and each $\bw_{ij}$ is either a linear word or the empty word.
For every term $\bt\in\Theta_n$, each word in $\bt$ is linear and has length at least $2$,
and for any $1\leq i<j\leq n+1$, $x_ix_j$ is a subterm of $\bt$.
Observe that $\bu_{n, n}$ lies in $\Theta_n$.

\begin{pro}\label{pro26012502}
Let $n\geq 2$ and $m\geq 3$ be integers.
If $\bv$ is a term in $\Theta_n$ different from $\bu_{m, m}$, then $\bu_{m,m}$ is $\bv$-free.
In particular, if $m\neq n$, then $\bu_{m,m}$ is $\bu_{n, n}$-free.
\end{pro}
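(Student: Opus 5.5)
The plan is to argue by contradiction, in the style of Proposition~\ref{pro26012501}. Suppose $\bu_{m,m}$ is not $\bv$-free. Then there are a term $\bp$ (possibly the empty word), a term $\br$ (possibly empty) and a substitution $\varphi$ with
\[
\bp\varphi(\bv)+\br=\bu_{m,m}.
\]
Write $\bv=\sum_{i<j}x_ix_j\bw_{ij}$. By Lemma~\ref{lem26011801}(d), $\bu_{m,m}$ contains no square. Since every pair $x_ix_j$ occurs in some word of $\bv$, the sets $c(\varphi(x_1)),\dots,c(\varphi(x_{n+1}))$ are therefore pairwise disjoint, as in Claim~\ref{claim555}. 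Moreover, every word of $\varphi(\bv)$ has length at least $2$. By Lemma~\ref{lem26011801}(c), $\bp$ must then be a single word: two distinct words of $\bp$ multiplied by one word of $\varphi(\bv)$ would give two distinct words of $\bu_{m,m}$ sharing a subword of length $2$.

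Next I would show that $\bp$ is empty. If $\bp$ is nonempty, every word of $\bp\varphi(\bv)$ has length at least $3$. Since $m\geq 3$, Lemma~\ref{lem26011801}(h) then forces $\bp\varphi(\bv)=x_1x_2\cdots x_m$. Consequently each $\varphi(x_i)$ is a word, and all the words $\varphi(x_ix_j\bw_{ij})$ coincide. By disjointness of contents this means every $\bw_{ij}$ is the complementary product of the remaining $x_k$, so that $\bv$ collapses to the single word $x_1\cdots x_{n+1}$.

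I have to flag a gap here. Under the literal definition of $\Theta_n$, this single word is allowed: for $n=2$ and $\bw_{12}=x_3$, $\bw_{13}=x_2$, $\bw_{23}=x_1$, the term is $x_1x_2x_3$. It is a subterm of $x_1\cdots x_m$, hence of $\bu_{m,m}$, which contradicts the proposition as stated. So the argument needs the extra hypothesis that $\bv$ has at least two distinct words, which I expect is intended (it holds for $\bu_{n,n}$). Under that hypothesis, the case $\bp\neq\varepsilon$ is contradictory.

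With $\bp$ empty, $\varphi(\bv)$ is a subset of $\bu_{m,m}$. This term consists of the long word $L=x_1\cdots x_m$ and the "edges" $x_kx_{m+1}$. At most one $\varphi(x_i)$ can contain $x_{m+1}$, by disjointness. Now consider any three indices $i,j,l$ with none of $\varphi(x_i),\varphi(x_j),\varphi(x_l)$ containing $x_{m+1}$. Each of $\varphi(x_ix_j\bw_{ij})$, $\varphi(x_ix_l\bw_{il})$ and $\varphi(x_jx_l\bw_{jl})$ would then have to equal $L$, since edges contain $x_{m+1}$. Using the triangle-freeness of Lemma~\ref{lem26011801}(i) and the disjointness of contents, I aim to show the following:
\begin{itemize}
\item exactly one variable, say $x_{n+1}$ after renaming, has $\varphi(x_{n+1})=x_{m+1}$;
\item every other $\varphi(x_i)$ is a single variable $x_{k_i}$;
\item each word $x_ix_{n+1}\bw_{i,n+1}$ must map to an edge, which forces $\bw_{i,n+1}$ to be empty;
\item all words among $x_1,\dots,x_n$ must map to $L$, which forces them to be the single word $x_1\cdots x_n$ with $\{k_i\}=\{1,\dots,m\}$.
\end{itemize}
Hence $n=m$ and $\bv=\bu_{m,m}$ up to renaming of variables, the desired contradiction. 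The "in particular" clause then follows because $\bu_{n,n}\in\Theta_n$.

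I expect the main obstacle to be the case $\bp=\varepsilon$. There one must exclude configurations where some $\varphi(x_i)$ is a genuine sum. Such a sum produces several words of $\varphi(\bv)$ from one word of $\bv$, and these must all fit among $L$ and the edges. The tools for this are disjointness of contents, the fact that edges have length $2$ (so $\bw_{ij}$ is empty whenever the image is an edge), and Lemma~\ref{lem26011801}(e), which prevents a proper subword relation between distinct words of $\bu_{m,m}$.
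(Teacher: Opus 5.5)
Your counterexample is correct. Taking $\bw_{12}=x_3$, $\bw_{13}=x_2$, $\bw_{23}=x_1$ gives $x_1x_2x_3\in\Theta_2$, and the substitution $x_3\mapsto x_3\cdots x_m$ sends it onto $x_1\cdots x_m\leq\bu_{m,m}$. The paper's proof has the same blind spot: its argument that $\bp$ is empty needs two distinct words of $\bv$, and its normal form $\prod_{i\le k}x_i+\cdots$ with $3\le k\le n$ silently excludes $k=n+1$.

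There is, however, a second family of exceptions that you pass over. You close with ``$\bv=\bu_{m,m}$ up to renaming of variables, the desired contradiction'', but that contradicts nothing in the hypothesis as stated. For $n=m$, swapping $x_1$ and $x_{m+1}$ in $\bu_{m,m}$ gives a term of $\Theta_m$ that differs from $\bu_{m,m}$, and the same swap is a substitution carrying it back onto $\bu_{m,m}$. So the hypothesis must also exclude renamings of $\bu_{m,m}$. The paper's ``we may write \dots\ since $\bv\neq\bu_{m,m}$'' relies on the same tacit normalization. Neither exception affects the ``in particular'' clause, and both are harmless in the application. A single-word $\bv$ gives a trivial $\delta_{n,\bv}$. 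Renamed copies of $\sigma_{m,m}$ are avoided by choosing $m$ larger than every index occurring in the finite subset $\Xi'$.

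The genuine gap is the case $\bp=\varepsilon$. This is the substance of the proposition, and you only list target conclusions for it. The route you indicate cannot reach them. Three indices whose images avoid $x_{m+1}$, with all three pair-words mapping onto $L$, is exactly what happens in $\bu_{m,m}$ itself under the identity substitution, so no contradiction is available there. Moreover, Lemma~\ref{lem26011801}(i) only applies when the three $\bw_{ij}$ are empty.

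The missing ingredient is the injectivity you already used for $\bp\neq\varepsilon$, in its general form (this is Claim~\ref{claim63401}). Since the sets $c(\varphi(x_i))$ are nonempty and pairwise disjoint, $c(\varphi(W))$ determines $c(W)$, hence the linear word $W$, for every word $W$ of $\bv$. With this the case closes as follows.

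Let $A$ be the set of indices $i$ with $x_{m+1}\notin c(\varphi(x_i))$. By disjointness at most one index lies outside $A$, so $|A|\geq n\geq 2$. For $i,j\in A$, every word of $\varphi(x_ix_j\bw_{ij})$ is $L$. A word of length $\geq 3$ must be $L$ by Lemma~\ref{lem26011801}(h), and a word of length $2$ avoiding $x_{m+1}$ does not occur in $\bu_{m,m}$. By injectivity, all pair-words inside $A$ are one word $W$ with $c(W)=A$. If $A$ contains every index, then $\bv=W$ is your single-word exception.

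Otherwise take $s\notin A$ and $i\in A$. A word of $\varphi(x_ix_s\bw_{is})$ built from a word of $\varphi(x_s)$ containing $x_{m+1}$ must be an edge. This forces $\bw_{is}=\varepsilon$ and forces every word of $\varphi(x_i)$ to be a variable. Then $\varphi(W)=L$ makes each $\varphi(x_i)$ with $i\in A$ a single variable, and $|A|=m$. Disjointness then gives $\varphi(x_s)=x_{m+1}$. Hence $\bv=\prod_{i\in A}x_i+\sum_{i\in A}x_ix_s$, which is a renaming of $\bu_{m,m}$.

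The paper takes a different route. It first shows that $\bv$ has exactly one word of length $\geq 3$: two such words would both map to $L$, against injectivity, and having none would violate Lemma~\ref{lem26011801}(i). It then uses Lemma~\ref{lem26011801}(e) to reach the normal form and finishes by comparing $k$ with $m$.
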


\begin{proof}
Suppose for contradiction that $\bu_{m, m}$ is not $\bv$-free for some
$m\geq 3$, $n \geq 2$ and $\bv\in \Theta_n \setminus \{\bu_{m,m}\}$.
Then there exist terms $\bp, \br\in P_f(X_c^+)$ and a substitution $\varphi$ such that
\begin{equation}\label{26012001}
\bp\varphi(\bv)+\br=\bu_{m,m},
\end{equation}
where $\bp$ may be the empty word, and $\br$ may be the empty set.

\begin{claim}\label{claim63401}
For distinct words $x_{i_1}x_{j_1}\bw_{i_1j_1}$ and $x_{i_2}x_{j_2}\bw_{i_2j_2}$ in $\bv$,
\[
\bp\varphi(x_{i_1}x_{j_1}\bw_{i_1j_1})\neq\bp\varphi(x_{i_2}x_{j_2}\bw_{i_2j_2}).
\]
\end{claim}
\begin{proof}[Proof of Claim $\ref{claim63401}$.]
By the equality~\eqref{26012001} and Lemma~\ref{lem26011801}(e),
the sets $c(x_{i_1}x_{j_1}\bw_{i_1j_1})$ and $c(x_{i_2}x_{j_2}\bw_{i_2j_2})$ are incomparable.
Suppose for contradiction that
\[
\bp\varphi(x_{i_1}x_{j_1}\bw_{i_1j_1})=\bp\varphi(x_{i_2}x_{j_2}\bw_{i_2j_2}).
\]
Then
\begin{equation}\label{26012301}
\varphi(x_{i_1}x_{j_1}\bw_{i_1j_1})=\varphi(x_{i_2}x_{j_2}\bw_{i_2j_2}).
\end{equation}

\textbf{Case 1.} $c(x_{i_1}x_{j_1}\bw_{i_1j_1})\cap c(x_{i_2}x_{j_2}\bw_{i_2j_2})$ is empty.
Take a variable $y$ in $\varphi(x_{i_1})$. By \eqref{26012301}, $y$ must appear in
$\varphi(x_{i_2})$, $\varphi(x_{j_2})$, or $\varphi(x_s)$ for some $x_s\leq \bw_{i_2j_2}$.
Consequently, $y^2$ is a subterm of
$\varphi(x_{i_1}x_{i_2})$, $\varphi(x_{i_1}x_{j_2})$, or $\varphi(x_{i_1}x_s)$.
Therefore, $y^2\leq \bu_{m, m}$, which contradicts Lemma~\ref{lem26011801}(d).

\textbf{Case 2.} $c(x_{i_1}x_{j_1}\bw_{i_1j_1})\cap c(x_{i_2}x_{j_2}\bw_{i_2j_2})$ is nonempty.
Without loss of generality, we may assume that $x_{i_1}=x_{i_2}$.
By \eqref{26012301}, it follows that
\[
\varphi(x_{j_1}\bw_{i_1j_1})=\varphi(x_{j_2}\bw_{i_2j_2}).
\]

If $c(x_{j_1}\bw_{i_1j_1})\cap c(x_{j_2}\bw_{i_2j_2})$ is empty,
we are back to Case~1 and obtain a contradiction.
If the intersection remains nonempty,
we iterate the same step: after finitely many iterations
we must arrive at two words with disjoint contents whose $\varphi$-images coincide, which again reduces to Case~1.
\end{proof}

\begin{claim}\label{claim63402}
The term $\bp$ is empty.
\end{claim}
\begin{proof}[Proof of Claim $\ref{claim63402}$.]
Suppose for contradiction that $\bp$ is nonempty.
Then by Lemma~\ref{lem26011801}(h),
\[
\bp\varphi(x_{i_1}x_{j_1}\bw_{i_1j_1})=\bp\varphi(x_{i_2}x_{j_2}\bw_{i_2j_2})=x_1x_2\dots x_m
\]
for any distinct terms $x_{i_1}x_{j_1}\bw_{i_1j_1}$ and $x_{i_2}x_{j_2}\bw_{i_2j_2}$ in $\bv$,
contradicting Claim~\ref{claim63401}.
\end{proof}

By Claim~\ref{claim63402}, the equality~\eqref{26012001} reduces to the form
\begin{equation}\label{26012002}
\varphi(\bv)+\br=\bu_{m,m}.
\end{equation}
Hence, in the subset representation, $\varphi(\bv)$ is a subset of $\bu_{m,m}$.

\begin{claim}\label{claim63403}
The term $\bv$ contains exactly one word of length at least $3$.
\end{claim}
\begin{proof}[Proof of Claim $\ref{claim63403}$.]
We argue by contradiction. First, suppose that $\bv$ contains two distinct words of length at least $3$,
say $x_{i_1}x_{j_1}\bw_{i_1j_1}$ and $x_{i_2}x_{j_2}\bw_{i_2j_2}$.
Then by Lemma~\ref{lem26011801}(h), we would have
\[
\varphi(x_{i_1}x_{j_1}\bw_{i_1j_1})=\varphi(x_{i_2}x_{j_2}\bw_{i_2j_2})=x_1x_2\dots x_m,
\]
contradicting Claim~\ref{claim63401}.

Now suppose, to the contrary, that every word in $\bv$ has length $2$.
This forces $\bw_{ij}=\{1\}$ for all $1\leq i<j\leq n+1$, and consequently
\[
\varphi(\bv)=\varphi\left(\sum_{1\leq i<j\leq n+1}x_ix_j\right).
\]
It follows that
\[
\varphi(x_1)\varphi(x_2)+\varphi(x_1)\varphi(x_3)+\varphi(x_2)\varphi(x_3)\subseteq \bu_{m,m},
\]
which contradicts Lemma~\ref{lem26011801}(i).
\end{proof}

Now we complete the main proof of the proposition.
By Lemma~\ref{lem26011801}(e), \eqref{26012002} and Claim~\ref{claim63403}, we may write
\begin{equation}\label{26012601}
\bv=
\prod_{i=1}^k x_i+\sum_{1\leq i\leq k, k+1\leq j\leq n+1}x_ix_j+\sum_{k+1\leq i<j\leq n+1}x_ix_j,
\end{equation}
where $3\leq k\leq n$.
Combining \eqref{26012002} and Lemma~\ref{lem26011801}(h), we have
\begin{equation}\label{26012503}
\varphi(x_1x_2\cdots x_k)=x_1x_2\dots x_m.
\end{equation}
This implies that $k\leq m$ and $\varphi(x_i)$ is a word for all $1\leq i\leq k$.

If $k<m$, then by \eqref{26012503}, there exists $1\leq i\leq k$ such that the word $\varphi(x_i)$ has length at least $2$.
Choose $j$ such that $k+1\leq j\leq n+1$.
Then $x_ix_j$ is a word in $\bv$.
By Lemma~\ref{lem26011801}(h) and \eqref{26012503},
\[
\varphi(x_ix_j)=x_1x_2\dots x_m=\varphi(x_1x_2\cdots x_k),
\]
which contradicts Claim~\ref{claim63401}.

If $k=m$, then by \eqref{26012601}, $k+1<n+1$, since $\bv\neq\bu_{m, m}$.
By \eqref{26012503}, each $\varphi(x_i)$ must be a single variable.
So we may assume that $\varphi(x_j)=x_{i_j}$ for all $1\leq j \leq k$.
By \eqref{26012002},
$\varphi(x_1x_{k+1})$ and $\varphi(x_1x_{k+2})$ are additive subterms of $x_1x_2\cdots x_m+x_{i_1}x_{m+1}$.
This implies that both $\varphi(x_{k+1})$ and $\varphi(x_{k+2})$ are additive subterms of $x_{i_2}\cdots x_{i_m}+x_{m+1}$,
and so $\varphi(x_{k+1}x_{k+2})$ is an additive subterm of $(x_{i_2}\cdots x_{i_m}+x_{m+1})^2$,
which contradicts \eqref{26012002}.

This completes the proof.
\end{proof}

As in Section~\ref{sec4}, we let $\sigma_{n, n}$ denote the inequality $\bq_n \preceq \bu_{n, n}$,
where
\[
\bq_n=\prod_{i=1}^{n+1}x_i.
\]
Write $\Omega$ for the set of all inequalities $\sigma_{n, n}$ with $n\geq 2$.

\begin{thm}\label{thm26012501}
Let $S$ be a commutative ai-semiring
and $\Xi$ an equational basis for $S$ that contains an infinite subset of $\Omega$.
If $\bu_{m, m}$ is $\bt$-free for every inequality $\bs\preceq\bt$ in $\Xi\setminus \Omega$
and for every $m\geq 3$, then $S$ is nonfinitely based.
\end{thm}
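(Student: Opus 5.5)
The plan is to mirror the proof of Theorem~\ref{thm26012401}, replacing Proposition~\ref{pro26012501} by Proposition~\ref{pro26012502}. By the compactness theorem (Lemma~\ref{el26012701}) it suffices to show that no finite subset $\Xi'\subseteq\Xi$ defines $\mathsf{V}(S)$. Fix such a $\Xi'$. Since $\Xi$ contains infinitely many members of $\Omega$, choose $m$ with
\[
m>\max\{n\geq 2\mid \sigma_{n,n}\in\Xi'\},\quad m\geq 3,\quad \sigma_{m,m}\in\Xi.
\]
Then $\sigma_{m,m}\notin\Xi'$, and $\sigma_{m,m}$ holds in $S$. It therefore suffices to show that $\sigma_{m,m}$, that is $\bq_m\preceq\bu_{m,m}$, i.e.\ $\bq_m+\bu_{m,m}\approx\bu_{m,m}$, is not derivable from $\Xi'$.

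Suppose it were derivable. By Lemma~\ref{lem02} there is a derivation sequence $\bt_1,\dots,\bt_r$ with $\bt_1=\bu_{m,m}$ and $\bt_r=\bu_{m,m}+\bq_m\neq\bu_{m,m}$. Look at the first step, $\bt_1=\bp_1\varphi_1(\bs_1)\bq_1+\br_1$, where $\bs_1\approx\bs_1'$ is one of the identities of $\Xi'$, read in either direction. The identities in $\Xi'$ are inequalities $\bs\preceq\bt$, i.e.\ $\bs+\bt\approx\bt$, so the side applied is either $\bt$ or $\bs+\bt$. In both cases $\bt$ is a subterm of that side, so by Lemma~\ref{lem26012701} it is enough to know that $\bu_{m,m}$ is $\bt$-free for every upper side $\bt$ occurring in $\Xi'$. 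This gives a contradiction, since $\varphi_1(\bs_1)$ would then be a subterm of $\bu_{m,m}$.

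Two kinds of upper sides must be checked. For inequalities in $\Xi'\setminus\Omega$, freeness holds by hypothesis. For inequalities $\sigma_{n,n}\in\Xi'\cap\Omega$, we have $n\geq 2$ and $n<m$, so $n\neq m$ and the particular case of Proposition~\ref{pro26012502} (with $\bu_{n,n}\in\Theta_n$) gives that $\bu_{m,m}$ is $\bu_{n,n}$-free. Hence no step of any derivation can start from $\bu_{m,m}$. Then $\bt_1=\bt_2=\cdots$, contradicting $\bt_r\neq\bt_1$.

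The one delicate point is justifying that freeness of the upper side is what matters. If the lower side $\bs$ alone could match inside $\bu_{m,m}$, this would not suffice. However, Lemma~\ref{lem02} applies the identities $\bs+\bt\approx\bt$ as written, so every applicable side contains $\bt$ as an additive subterm, and Lemma~\ref{lem26012701} handles it. This is exactly the reduction already used in the proof of Theorem~\ref{thm26012401}, so I expect no genuine obstacle beyond this bookkeeping.
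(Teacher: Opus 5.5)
Your proposal is correct and follows essentially the same route as the paper: compactness, choosing $m\geq 3$ with $\sigma_{m,m}\in\Xi$ beyond every $\sigma_{n,n}\in\Xi'$, and then using the hypothesis together with Proposition~\ref{pro26012502} and Lemmas~\ref{lem26012701} and~\ref{lem02} to show that no identity of $\Xi'$ can be applied to $\bu_{m,m}$. Your remark that both sides $\bt$ and $\bs+\bt$ of each inequality contain $\bt$, and your explicit requirement $m\geq 3$ (which matters when $\Xi'\cap\Omega=\emptyset$), spell out details the paper leaves implicit.
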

\begin{proof}
By hypothesis and the compactness theorem of equational logic,
it is enough to prove that no finite subset of the set $\Xi$ defines the variety $\mathsf{V}(S)$.

Let $\Xi'$ be an arbitrary finite subset of $\Xi$.
Since $\Xi$ contains an infinite subset of $\Omega$, we can choose an integer $m$ such that
\[
m>\max\{n\geq 2\mid  \sigma_{n, n} \in \Xi'\} ~\text{and}~\sigma_{m, m}\in\Xi.
\]
Then $\sigma_{m, m}\notin\Xi'$.

To prove that $\Xi'$ can not define the variety $\mathsf{V}(S)$,
it suffices to verify that $\sigma_{m, m}$ is not derivable from $\Xi'$.
By hypothesis, together with Lemma~\ref{lem26012701} and Lemma~\ref{lem02},
it is sufficient to show that $\bu_{m, m}$ is $\bu_{n, n}$-free for all $2 \leq n<m$,
which follows from Proposition~\ref{pro26012502} immediately.
This completes the proof.
\end{proof}

Recall that $S_{(4, 634)}$ is isomorphic to $S^0_{53}$.
The following result, due to Wu et al.~\cite[Proposition 1.5]{wrz},
explores the relationship between the equational theories of $S^0$ and $S$.

\begin{lem}\label{lem04}
Let $\bq\preceq \bu$ be an ai-semiring identity such that
$\bu=\bu_1+\bu_2+\cdots+\bu_n$, where $\bu_i, \bq \in X^+$, $1\leq i \leq n$.
Then $\bq\preceq \bu$ is satisfied by ${S}^0$ if and only if
$\bq\preceq D_\bq(\bu)$ holds in $S$.
\end{lem}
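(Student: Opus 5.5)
The plan is to argue directly with homomorphisms $\varphi\colon P_f(X^+)\to S^0$, using the two structural facts about the adjoined element: $0$ is the additive identity (and hence the least element of $\leq_{S^0}$), and $0$ is a multiplicative zero. Consequently, for any word $\bw$ we have $\varphi(\bw)=0$ exactly when some variable of $c(\bw)$ is sent to $0$. Otherwise $\varphi(\bw)\in S$ and it is computed inside $S$. Moreover, a sum of elements of $S^0$ equals the sum, computed in $S$, of its summands that are nonzero, and it equals $0$ if there are none. In particular, for every term $\bu$,
\[
\varphi(\bu)=\sum\{\varphi(\bu_i)\mid \bu_i\in\bu,\ \varphi(x)\neq 0 \text{ for all } x\in c(\bu_i)\}.
\]

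For sufficiency, assume $S$ satisfies $\bq\preceq D_\bq(\bu)$. Here $D_\bq(\bu)$ is implicitly nonempty, since it must be a term. Let $\varphi\colon P_f(X^+)\to S^0$ be arbitrary, and split into two cases.

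If some $x\in c(\bq)$ has $\varphi(x)=0$, then $\varphi(\bq)=0\leq\varphi(\bu)$ and there is nothing to prove.

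Otherwise every variable of $c(\bq)$ is mapped into $S$. Then every word of $D_\bq(\bu)$ survives, so $\varphi(\bu)=\varphi(D_\bq(\bu))+\mathbf{e}$, where $\mathbf{e}$ is either $0$ or an element of $S$. To use the hypothesis, define a homomorphism $\psi$ into $S$ that agrees with $\varphi$ on $c(\bq)$ and sends the remaining variables anywhere in $S$. Since $c(D_\bq(\bu))\subseteq c(\bq)$, we get $\psi(\bq)=\varphi(\bq)$ and $\psi(D_\bq(\bu))=\varphi(D_\bq(\bu))$. Therefore
\[
\varphi(\bq)\leq\varphi(D_\bq(\bu))\leq\varphi(\bu),
\]
where the last step holds because addition is monotone and $0$ is the bottom element.

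For necessity, assume $S^0$ satisfies $\bq\preceq\bu$. Let $\psi\colon P_f(X^+)\to S$ be arbitrary, and define $\varphi$ into $S^0$ by $\varphi(x)=\psi(x)$ for $x\in c(\bq)$ and $\varphi(x)=0$ otherwise. Under this $\varphi$ the surviving words of $\bu$ are exactly those in $D_\bq(\bu)$. Hence $\varphi(\bu)=\psi(D_\bq(\bu))$ if $D_\bq(\bu)\neq\emptyset$, and $\varphi(\bu)=0$ otherwise, while $\varphi(\bq)=\psi(\bq)\in S$.

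The only delicate point is the case $D_\bq(\bu)=\emptyset$. There the hypothesis would give $\psi(\bq)\leq 0$ in $S^0$, which forces $\psi(\bq)=0$. That is impossible because $\psi(\bq)\in S$ and $0\notin S$. So $D_\bq(\bu)$ is nonempty, and the statement makes sense.

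Once $D_\bq(\bu)$ is known to be nonempty, the inequality $\psi(\bq)\leq\psi(D_\bq(\bu))$ holds in $S^0$. Since $S$ is a subsemiring of $S^0$, the order on $S$ is the restriction of the order on $S^0$, so the inequality also holds in $S$. I expect no substantial obstacle beyond carefully tracking the nonemptiness of $D_\bq(\bu)$ and the fact that $S$ is a subsemiring of $S^0$ with the same order.
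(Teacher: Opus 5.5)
Your proof is correct. Necessity sends the variables outside $c(\bq)$ to $0$, and sufficiency splits on whether some variable of $\bq$ is sent to $0$. You also correctly observe that necessity forces $D_\bq(\bu)\neq\emptyset$, a hypothesis the statement leaves implicit; this is exactly the condition of Lemma~\ref{lem01} for $D_2$, which is a homomorphic image of $S^0$.

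The paper itself gives no proof here and simply quotes the result from Wu et al.~\cite[Proposition 1.5]{wrz}, so there is no argument of the paper's to compare against.
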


For $n \geq 2$ and $\bv\in \Theta_n$,
let $\delta_{n,\bv}$ denote the inequality $\bq_n \preceq \bv$.
Then $\sigma_{n,n}$ coincides with $\delta_{n,\bv}$ for some $\bv\in \Theta_n$.
We now present an infinite equational basis for $S_{(4, 634)}$.
\begin{pro}\label{pro63401}
$\mathsf{V}(S_{(4, 634)})$
is the commutative ai-semiring variety defined by the identities \eqref{54501}, \eqref{54503}, and
\begin{align}
&xy \preceq x^2+y;\label{63404}\\
&\delta_{n,\bv} \quad (n \geq 2, \bv\in \Theta_n); \label{63405}\\
&\bq_n \preceq \left(\sum_{i=1}^{n}x_i^2\bw_i\right)+x_{n+1}\bw_{n+1}\quad (n \geq 2); \label{63406}\\
&\bq_n \preceq \left(\sum_{i=1}^{k}x_i^2\bw_i\right)+\left(\sum_{k+1\leq i<j\leq n+1}x_ix_j\bw_{ij}\right)\quad (n \geq 2, 1\leq k<n+1),\label{63407}
\end{align}
where in \eqref{63406} and \eqref{63407} each $\bw_i$ satisfies
$c(\bw_i) \subseteq \{x_1, x_2, \dots, x_{n+1}\} \setminus \{x_i\}$
and is either a linear word or the empty word;
in \eqref{63407} each $\bw_{ij}$ satisfies
$c(\bw_{ij}) \subseteq \{x_1, x_2, \dots, x_{n+1}\} \setminus \{x_i, x_j\}$
and is either a linear word or the empty word.
\end{pro}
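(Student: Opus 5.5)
The proof has two halves: soundness of the listed identities in $S_{(4,634)}\cong S_{53}^0$, and completeness, i.e.\ every inequality valid in $S^0_{53}$ is derivable from them. Both halves rest on Lemma~\ref{lem04}, which turns validity in $S^0_{53}$ into validity in $S_{53}$ of $\bq\preceq D_\bq(\bu)$. Lemma~\ref{lem5303} then makes this combinatorial: $D_\bq(\bu)$ must contain a word of length at least $2$, we need $c(\bq)\subseteq c(D_\bq(\bu))$, and every length-$2$ subword $x_ix_j$ of $\bq$ must be dominated by some $\bw'\in S_2(D_\bq(\bu))$ with $c(\bw')\subseteq\{x_i,x_j\}$.

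\textbf{Soundness.} Multiplication is commutative in $S_{(4,634)}$, and \eqref{54501}, \eqref{54503}, \eqref{63404} can be checked directly on the four elements; in each, every upper-side word has content inside that of the lower side. For \eqref{63405}--\eqref{63407}, note that the lower side $\bq_n=x_1\cdots x_{n+1}$ is linear and every upper-side word has content in $\{x_1,\dots,x_{n+1}\}$. Hence $D_{\bq_n}(\bu)=\bu$, and it suffices to verify the three conditions of Lemma~\ref{lem5303}:
\begin{itemize}
\item For $\Theta_n$, every pair $x_ix_j$ is literally a subword.
\item For \eqref{63406}, a pair $x_ix_j$ with $i\le n$ is dominated by $x_i^2$, while $x_{n+1}$ occurs in $x_{n+1}\bw_{n+1}$.
\item For \eqref{63407}, pairs meeting $\{1,\dots,k\}$ are dominated by a square, and pairs inside $\{k+1,\dots,n+1\}$ occur directly.
\end{itemize}

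\textbf{Completeness.} Take a nontrivial valid $\bq\preceq\bu$ with $\bq,\bu_i\in X_c^+$. First replace $\bu$ by $D_\bq(\bu)$, which is an additive subterm of $\bu$, so $\bu\succeq D_\bq(\bu)$ and we may assume $c(\bu_i)\subseteq c(\bq)$ for all $i$. If $\ell(\bq)=1$, some $\bu_i=x^k$ and we finish with \eqref{54501}, \eqref{54503} as in Proposition~\ref{pro54501}. Next reduce to linear $\bq$. A repeated variable in $\bq$ collapses via $x^3\approx x^2$, and $x^2$ in $\bq$ can be handled because $x\preceq x^2$ and $\bq$ is bounded by $\bu$ only through squares. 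Concretely, I would derive $\bu\succeq\bq'$ for the linearization $\bq'$ of $\bq$ and then recover $\bq$. I expect this step to need \eqref{63404} together with the $S_2$-condition for the pair $xx$, which forces $x^2\in S_2(\bu)$.

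For linear $\bq=x_1\cdots x_{n+1}$, rename variables and classify them. Call $x_i$ \emph{squared} if $x_i^2$ occurs in some word of $\bu$; say there are $k$ such. Every pair of unsquared variables must then occur together, unsquared and linear, in a word of $\bu$. The next step is to normalize $\bu$ into the shape of \eqref{63406}, \eqref{63407} or $\Theta_n$:
\begin{itemize}
\item Use $x^3\approx x^2$ and \eqref{63404} to rewrite each chosen word $x_i^2\bw$ so that $\bw$ is linear and avoids $x_i$. This is sound because $\bu\succeq x_i^2\bw$ and rewriting only increases the sum via $\preceq$.
\item Pick, for each square and each unsquared pair, one witnessing word, and keep only those.
\end{itemize}
If $k=0$, the witnessing words give a member of $\Theta_n$, and \eqref{63405} applies. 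If $1\le k<n+1$, we get the shape of \eqref{63407}. The case $k=n$, where there is exactly one unsquared variable, is covered by \eqref{63406}, using $c(\bq)\subseteq c(\bu)$ to find a word containing $x_{n+1}$. The case $k=n+1$ similarly needs only a word $x_{n+1}^2\bw$, obtained via \eqref{63406} after weakening.

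The main obstacle is that witnessing words may be nonlinear in variables other than the distinguished squared one, and may contain extra squares. I would first try to linearize them using $x^3\approx x^2$ and \eqref{63404}, applied in the form $x^2\bw+\dots\succeq x\bw\cdots$. If that fails, I would use the observation that the basis inequalities already permit arbitrary linear $\bw$. Then I would derive any instance with nonlinear $\bw$ by substituting into an instance with fewer variables, identifying variables via a substitution $\varphi$ as in Lemma~\ref{lem02}.
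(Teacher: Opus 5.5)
Your completeness argument follows the paper's route. You pass to $D_\bq(\bu)$ via Lemma~\ref{lem04}, read off the conditions of Lemma~\ref{lem5303}, and split the variables of $\bq$ into squared and unsquared ones. You shorten the witnessing words, then finish with \eqref{63406} when at most one variable is unsquared, with \eqref{63405} when none is squared, and with \eqref{63407} otherwise. This is exactly the paper's Subcase~2.1/2.2 split.

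The genuine problem is in your soundness half. For \eqref{63407} you check only the $S_2$-condition of Lemma~\ref{lem5303}, but the content condition is also required, and it fails when $k=n$. In that case the second sum runs over $n+1\le i<j\le n+1$ and is empty, so $x_{n+1}$ occurs on the upper side only if some $\bw_i$ happens to contain it. For instance, $n=k=2$ with $\bw_1=\bw_2=\varepsilon$ gives $x_1x_2x_3\preceq x_1^2+x_2^2$. This fails in $S_{(4,634)}$ at $x_1=x_2=1$, $x_3=a$: the left side is $a$, the right side is $1$, and $a+1=a\neq 1$.

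So the list as printed contains inequalities that $S_{(4,634)}$ does not satisfy, and the paper's one-line soundness claim has the same defect. The repair matches what your completeness argument already does. \eqref{63407} is only ever needed for $1\le k\le n-1$ (the paper's Subcase~2.2 also takes $k<n$), and $k=n$ is covered by \eqref{63406}. With that restriction there are at least two unsquared indices, each occurring in some $x_ix_j\bw_{ij}$, so the content condition holds and your check goes through.

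There are three smaller points in completeness.

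\textbf{Nonlinear $\bq$.} Your reduction is misdirected as phrased: $x\preceq x^2$ yields $\bq'\preceq\bq$, which does not help put $\bq$ under $\bu$. What works is \eqref{63404} in context. Suppose $\bq=x^2\bq''$. The $S_2$-condition for $xx$ plus shortening gives $\bu\succeq x^2\bp$ with $\bp$ linear or empty and $x\notin c(\bp)$. Then $\bp$ divides $\bq''$; write $\bq''=\bp\br$. Multiplying the instance $x\mapsto x$, $y\mapsto x\br$ of \eqref{63404} by $\bp$ yields $x^2\bq''\preceq x^2\bp+x\bq''$. Since $x\bq''\preceq\bu$ is again valid (same content, $S_2(x\bq'')\subseteq S_2(\bq)$), induction on $\ell(\bq)-|c(\bq)|$ reduces to the linear case. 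The paper avoids this step by writing $\bq=x_1\cdots x_{n+1}$ with possibly repeated variables and tacitly using substitution instances of the linear inequalities; either way works.

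\textbf{The case $n=1$.} Since \eqref{63405}--\eqref{63407} require $n\ge 2$, the case $\bq=x_1x_2$ with a squared variable must be finished directly by \eqref{63404}.

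\textbf{Shortening witness words.} This is done by \eqref{54501} and \eqref{54503}, not \eqref{63404}. It always succeeds, so your fallback via substitution is unnecessary.
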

\begin{proof}
It is straightforward to verify that $S_{(4, 634)}$ satisfies the identities \eqref{54501}, \eqref{54503}, and \eqref{63404}.
By Lemmas \ref{lem5303} and \ref{lem04}, $S_{(4, 634)}$ also satisfies the inequalities in \eqref{63405}--\eqref{63407}.
It suffices to show that every inequality holding in $S_{(4, 634)}$
is derivable from \eqref{54501}, \eqref{54503}, \eqref{63404}--\eqref{63407}.
Consider such a nontrivial inequality $\bq\preceq \bu$, where
$\bu=\bu_1+\bu_2+\cdots+\bu_n$ and $\bu_i, \bq \in X_c^+$, $1 \leq i \leq n$.
By Lemma~\ref{lem04}, $\bq\preceq D_\bq(\bu)$ holds in $S_{53}$.
Lemma~\ref{lem5303} further implies that
$L_{\geq 2}(D_\bq(\bu))\neq \emptyset$, $c(\bq)=c(D_\bq(\bu))$,
and for any $\bw\in S_2(\bq)$, there exists $\bw'\in S_2(D_\bq(\bu))$ such that $c(\bw')\subseteq c(\bw)$.

\textbf{Case 1.} $\ell(\bq)=1$. Then $\bq=x$ for some $x\in X$. Since $L_{\geq 2}(D_\bq(\bu))\neq \emptyset$,
there exists $\bu_j\in D_\bq(\bu)$ such that $\ell(\bu_j)\geq2$, and so $\bu_j=x^k$ for some $k\geq2$.
Now we have
\[
\bu \succeq\bu_j=x^k\stackrel{\eqref{54501}}\approx x^2\stackrel{\eqref{54503}}\succeq x=\bq,
\]
which derives the inequality $\bu \succeq \bq$.

\textbf{Case 2.} $\ell(\bq)\geq2$.
Then we may write $\bq=x_1x_2\cdots x_{n+1}$ for some $n\geq 1$ and $x_1, x_2, \ldots, x_{n+1} \in X$.
For any $1\leq i<j\leq n+1$, we have that $x_ix_j\in S_2(\bq)$,
and so there exists $\bw_{ij}\in S_2(D_\bq(\bu))$ such that $c(\bw_{ij})\subseteq \{x_i, x_j\}$.
Consequently, $\bw_{ij}$ must be one of $x_i^2$, $x_j^2$ or $x_ix_j$. We distinguish two subcases.

\textbf{Subcase 2.1.} For any $1\leq i<j\leq n+1$,
there is $\bw_{ij}\in S_2(D_\bq(\bu))$ such that $\bw_{ij}=x_i^2$ or $\bw_{ij}=x_j^2$.
One can deduce that $\{i\mid 1\leq i\leq n+1, x_i^2\in S_2(D_\bq(\bu))\}$
contains at least $n$ elements.
We may assume that it contains $1, 2, \ldots, n$.
Then $x_1^2\bp_1, x_2^2\bp_2, \dots, x_{n}^2\bp_{n}\in D_\bq(\bu)$ for some $\bp_1, \bp_2, \ldots, \bp_n \in X^*$.
Since $c(\bq)=c(D_\bq(\bu))$,
it follows that there exists $\bu_j\in D_\bq(\bu)$ such that $\bu_j=x_{n+1}\bp_{n+1}$ for some $\bp_{n+1}\in X^*$.
Now we have
\begin{align*}
\bu
&\succeq x_1^2\bp_1+x_2^2\bp_2+\dots+x_{n}^2\bp_{n}+x_{n+1}\bp_{n+1}\\
&\succeq x_1^2\bp'_1+x_2^2\bp'_2+\dots+x_{n}^2\bp'_{n}+x_{n+1}\bp'_{n+1}&&(\text{by}~\eqref{54503})\\
&\succeq x_1x_2\cdots x_{n+1}=\bq, &&(\text{by}~\eqref{63404},\eqref{63406})
\end{align*}
where $\bp'_i$ is either empty or linear, $1\leq i \leq n+1$.
This implies the inequality $\bu \succeq \bq$.

\textbf{Subcase 2.2.} There exist $1\leq i<j\leq n+1$ such that
for every $\bw\in S_2(D_\bq(\bu))$, $\bw\neq x_{i}^2$ and $\bw\neq x_{j}^2$.
Consequently, $x_{i}x_{j} \in S_2(D_\bq(\bu))$.
Hence $x_{i}x_{j}\bp_{ij}\in D_\bq(\bu)$ for some $\bp_{ij}\in X^*$.

For convenience, we assume that for some integer $k$ with $0\leq k<n$,
$x_i^2\in S_2(\bu)$ for every $0\leq i\leq k$, and $x_ix_j\in S_2(\bu)$ for every $k+1\leq i<j\leq n+1$.
Then $x_i^2\bp_i\in D_\bq(\bu)$ for some $\bp_i\in X^*$, $0\leq k<n$;
$x_ix_j\bp_{ij}\in D_\bq(\bu)$ for some $\bp_{ij}\in X^*$, $k+1\leq i<j\leq n+1$.
Now
\begin{align*}
\bu
&\succeq x_1^2\bp_1+x_2^2\bp_2+\dots+x_k^2\bp_k+\sum_{k+1\leq i<j\leq n+1}x_ix_j\bp_{ij}\\
&\succeq x_1^2\bp'_1+x_2^2\bp'_2+\dots+x_k^2\bp'_k+\sum_{k+1\leq i<j\leq n+1}x_ix_j\bp'_{ij} &&(\text{by}~\eqref{54503})\\
&\succeq x_1x_2\cdots x_{n+1}=\bq, &&(\text{by}~\eqref{63405}, \eqref{63407})
\end{align*}
where $\bp'_i$ is either empty or linear for every $1\leq i \leq k$,
and $\bp'_{ij}$ is either empty or linear for every $k+1\leq i<j\leq n+1$.
This derives the inequality $\bu \succeq \bq$.
\end{proof}

\begin{thm}
The ai-semiring $S_{(4, 634)}$ is nonfinitely based.
\end{thm}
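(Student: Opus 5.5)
We will apply Theorem~\ref{thm26012501} to the infinite equational basis $\Xi$ of $S_{(4,634)}$ from Proposition~\ref{pro63401}, namely \eqref{54501}, \eqref{54503}, \eqref{63404} together with all inequalities in \eqref{63405}--\eqref{63407}. Since $\bu_{n,n}\in\Theta_n$ for every $n\geq 2$, each $\sigma_{n,n}$ appears as $\delta_{n,\bu_{n,n}}$ in \eqref{63405}. Hence $\Xi\supseteq\Omega$, and in particular $\Xi$ contains an infinite subset of $\Omega$.

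By Theorem~\ref{thm26012501}, it then remains to check that $\bu_{m,m}$ is $\bt$-free for every $m\geq 3$ and every upper side $\bt$ of an inequality in $\Xi\setminus\Omega$. These upper sides fall into three groups.

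\textbf{First group: the terms $x^3$, $x^2$, $x^2+y$.} Each contains the square $x^2$ as a subterm. By Lemma~\ref{lem26011801}(d), $\bu_{m,m}$ has no subterm that is the square of a term, so it is $x^2$-free. Lemma~\ref{lem26012701} then gives freeness with respect to all three terms.

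\textbf{Second group: the terms of \eqref{63406} and \eqref{63407}.} In \eqref{63406} the term contains $x_1^2\bw_1$, which has $x_1^2$ as a subterm. In \eqref{63407}, since $k\geq 1$, the term contains $x_1^2\bw_1$ as well. In both cases Lemma~\ref{lem26012701} again reduces the claim to $x^2$-freeness, which we already have.

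\textbf{Third group: the terms $\bv\in\Theta_n$ from \eqref{63405} with $\bv\neq\bu_{n,n}$.} We must ensure $\bv\neq\bu_{m,m}$ before invoking Proposition~\ref{pro26012502}. Note that $\bu_{m,m}$ lies only in $\Theta_m$, and within $\Theta_m$ it is exactly the term $\bu_{m,m}$, which belongs to $\Omega$. So any $\bv\in\Theta_n\setminus\{\bu_{n,n}\}$ differs from $\bu_{m,m}$: if $n\neq m$ the two terms lie in different families, and if $n=m$ then $\bv\neq\bu_{m,m}$ by assumption. Proposition~\ref{pro26012502} then yields that $\bu_{m,m}$ is $\bv$-free. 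The $\sigma_{n,n}$ themselves, including those with $n\neq m$, are handled inside Theorem~\ref{thm26012501} via the ``in particular'' part of Proposition~\ref{pro26012502}.

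The real difficulty is already contained in Proposition~\ref{pro26012502}. The only care needed here is bookkeeping: $\Omega$ must be identified with the subset $\{\delta_{n,\bu_{n,n}}\}$ of \eqref{63405}, so that the remaining $\Theta_n$-terms avoid $\bu_{m,m}$. With that in place, Theorem~\ref{thm26012501} gives that $S_{(4,634)}$ is nonfinitely based.
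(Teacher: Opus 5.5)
Your reduction is the paper's own. You apply Theorem~\ref{thm26012501} to the basis $\Xi$ of Proposition~\ref{pro63401}, dispose of \eqref{54501}, \eqref{54503}, \eqref{63404}, \eqref{63406}, \eqref{63407} through their squares and Lemma~\ref{lem26011801}(d), and dispose of the $\Theta_n$-terms through Proposition~\ref{pro26012502}. The first two groups are fine. The third is not, and the step you call bookkeeping is exactly where it breaks; the paper's proof has the same slip. Knowing $\bv\neq\bu_{m,m}$ as sets does not make $\bu_{m,m}$ $\bv$-free.

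There are two kinds of counterexample. First, $\Theta_m$ is closed under permuting $x_1,\dots,x_{m+1}$, so it contains every renamed copy of $\bu_{m,m}$. For instance, $\bv=x_2x_3x_4+x_1x_2+x_1x_3+x_1x_4\in\Theta_3\setminus\{\bu_{3,3}\}$, and the substitution exchanging $x_1$ and $x_4$ maps $\bv$ onto $\bu_{3,3}$. Second, letting every $\bw_{ij}$ be the product of the remaining variables shows that the single word $x_1x_2\cdots x_{n+1}$ lies in $\Theta_n$. For $n=2$ this is $x_1x_2x_3$, which is a word of $\bu_{3,3}$.

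So for $\Xi$ as chosen, the hypothesis of Theorem~\ref{thm26012501} fails, and Proposition~\ref{pro26012502} is false as literally stated. Its proof only covers terms $\bv$ that have at least two words and are not renamings of $\bu_{m,m}$. It relabels variables to reach \eqref{26012601} before using $\bv\neq\bu_{m,m}$. Claim~\ref{claim63402} and the range $3\leq k\leq n$ both need two words.

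The repair is cheap. The inequality $\delta_{n,x_1\cdots x_{n+1}}$ is the trivial $\bq_n\preceq\bq_n$. For a permutation $\pi$ of $x_1,\dots,x_{n+1}$, the inequality $\delta_{n,\pi(\bu_{n,n})}$ is the image of $\sigma_{n,n}$ under $\pi$, because $\pi(\bq_n)=\bq_n$ in the commutative setting. Delete all of these from $\Xi$. What remains is still a basis containing $\Omega$. Every remaining $\bv\in\Theta_n\setminus\{\bu_{n,n}\}$ has at least two words and is not a renaming of $\bu_{m,m}$: for $n\neq m$ the numbers of variables differ, and for $n=m$ this holds by construction. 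That is precisely the case the argument of Proposition~\ref{pro26012502} does handle. With this pruning, your application of Theorem~\ref{thm26012501} goes through.
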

\begin{proof}
By Proposition~\ref{pro63401},
\[
\Xi=\{\eqref{54501}, \eqref{54503}, \eqref{63404}, \eqref{63406}, \eqref{63407}\}
\cup\{\delta_{n, \bv} \mid n\geq 2, \bv\in \Theta_n\}
\]
is an equational basis for the commutative ai-semiring $S_{(4, 634)}$, which contains $\Omega$.
By Theorem~\ref{thm26012501}, to show that $S_{(4, 634)}$ is nonfinitely based,
it suffices to prove that for any $m\geq 3$, $\bu_{m, m}$ is $\bw$-free for every term $\bw$ in the sets
\[
\Gamma=\{\bt \mid \bt ~\text{is an upper side of an inequality in}~\{\eqref{54501}, \eqref{54503},\eqref{63404},\eqref{63406},\eqref{63407}\}\}
\]
and
\[
\{\bv \mid \bv\in \Theta_n\setminus \{\bu_{n,n}\}, n\geq 2\}.
\]
Indeed, the required freeness follows from Lemma~\ref{lem26011801}(d) for $\mathbf{w} \in\Gamma$
and from Proposition~\ref{pro26012502} for $\mathbf{w}\in \{\bv \mid \bv\in \Theta_n\setminus  \{\bu_{n,n}\}, n\geq 2\}$.
\end{proof}

\section{The interval $[\mathsf{V}(S_{(4, 545)}), \mathsf{V}(S_{(4, 634)})]$}
In this section, we prove that the interval
$[\mathsf{V}(S_{(4, 545)}), \mathsf{V}(S_{(4, 634)})]$,
which consists of all subvarieties of $\mathsf{V}(S_{(4, 634)})$ that include $\mathsf{V}(S_{(4, 545)})$,
contains $2^{\aleph_0}$ distinct varieties.
Consequently, the interval $[\mathsf{V}(S_{53}), \mathsf{V}(S_{53}^0)]$ also comprises $2^{\aleph_0}$ distinct varieties.

To better understand the interval,
we first provide an equational characterization of its lower end $\mathsf{V}(S_{(4, 545)})$ in the upper end $\mathsf{V}(S_{(4, 634)})$.
Although the explicit description itself is not directly used in the later cardinality argument,
it gives a first indication of how much $\mathsf{V}(S_{(4, 634)})$ exceeds $\mathsf{V}(S_{(4, 545)})$.

\begin{pro}\label{lem05}
$\mathsf{V}(S_{(4, 545)})$ is the subvariety of $\mathsf{V}(S_{(4, 634)})$ defined by the inequalities \eqref{54504} and
$\sigma_{n,k}$ for all $n \geq 1$, $0\leq k<n$.
\end{pro}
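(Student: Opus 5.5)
The proof is a comparison of the two infinite bases already found. Let $\mathcal{W}$ denote the subvariety of $\mathsf{V}(S_{(4,634)})$ defined by \eqref{54504} and $\sigma_{n,k}$ ($n\geq 1$, $0\leq k<n$). By Corollary~\ref{coro26012301}, $\mathsf{V}(S_{(4,545)})\subseteq\mathsf{V}(S_{(4,634)})$. By Proposition~\ref{pro54501}, $S_{(4,545)}$ satisfies \eqref{54504} and every $\sigma_{n,k}$. Hence $\mathsf{V}(S_{(4,545)})\subseteq\mathcal{W}$.

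For the reverse inclusion we show that every identity in the basis of Proposition~\ref{pro54501} holds in $\mathcal{W}$.
\begin{itemize}
\item Identities \eqref{54501} and \eqref{54503} belong to the basis of $\mathsf{V}(S_{(4,634)})$ in Proposition~\ref{pro63401}, so they hold in $\mathcal{W}$.
\item Identity \eqref{54504} and the $\sigma_{n,k}$ with $k<n$ hold by the definition of $\mathcal{W}$.
\item The remaining inequalities are $\sigma_{n,n}$ for $n\geq 1$.
\end{itemize}

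For $n\geq 2$, note that $\bu_{n,n}\in\Theta_n$. Hence $\sigma_{n,n}$ is one of the inequalities $\delta_{n,\bv}$ in \eqref{63405}, and it holds in $\mathsf{V}(S_{(4,634)})\supseteq\mathcal{W}$.

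For $n=1$, $\sigma_{1,1}$ is $x_1x_2\preceq x_1+x_1x_2$, which is trivial. Alternatively one can check directly that $S_{(4,634)}$ satisfies it: by Lemma~\ref{lem04} it reduces to $S_{53}$, where Lemma~\ref{lem5303} applies because $D_{\bq}(\bu)=x_1+x_1x_2$.

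Therefore $\mathcal{W}$ satisfies the whole basis of Proposition~\ref{pro54501}, so $\mathcal{W}\subseteq\mathsf{V}(S_{(4,545)})$, and the two varieties are equal.

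The argument has no real obstacle. The only step needing care is the observation that $\sigma_{n,n}$ is an instance of \eqref{63405}, which requires $\bu_{n,n}\in\Theta_n$; the paper records this fact right after defining $\Theta_n$.
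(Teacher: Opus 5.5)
Your proof is correct and follows the paper's argument: in both, the reverse inclusion comes down to checking that $\sigma_{n,n}$ holds in $\mathsf{V}(S_{(4,634)})$, via the observation that $\sigma_{n,n}$ is the instance $\delta_{n,\bu_{n,n}}$ of \eqref{63405}. Your explicit remark that $\sigma_{1,1}$ is trivial covers the case $n=1$, which \eqref{63405} (stated only for $n\geq 2$) does not reach and which the paper leaves implicit.
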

\begin{proof}
By Corollary~\ref{coro26012301}, $\mathsf{V}(S_{(4, 545)})$ is the subvariety for $\mathsf{V}(S_{(4, 634)})$.
Proposition~\ref{pro54501} shows that the identities \eqref{54501}, \eqref{54503}, \eqref{54504}
together with all $\sigma_{n,k}; (n \geq 1, 0\leq k\leq n)$
form an equational basis for $\mathsf{V}(S_{(4, 545)})$.
By Proposition~\ref{pro63401}, the identities \eqref{54501}, \eqref{54503}, \eqref{63404}--\eqref{63407}
form an equational basis for $\mathsf{V}(S_{(4, 634)})$.
So it suffices to prove that $\sigma_{n,n}$ is derivable
from the above equational basis for $\mathsf{V}(S_{(4, 634)})$.
Indeed, choose an inequality in \eqref{63405}, where
\[
\bw_{ij}=x_1\cdots x_{i-1}x_{i+1}\cdots x_{j-1}x_{j+1}\cdots x_n
\]
if $1\leq i<j\leq n$,
and $\bw_{ij}=\varepsilon$ otherwise. This derives the identity
\[
\prod_{i=1}^{n+1} x_i \preceq \prod_{i=1}^n x_i+\left(\sum_{i=1}^nx_i\right)x_{n+1},
\]
which is precisely $\sigma_{n,n}$.
\end{proof}

The proof of the following result follows the same pattern as that of Proposition~\ref{pro26012502}.
\begin{pro}\label{lem06}
Let $n\geq2$ be an integer and let $\bv$ be a term in $\Theta_n$. Then $\bu_{m,0}$ is $\bv$-free for every $m\geq 1$.
\end{pro}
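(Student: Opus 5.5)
We argue by contradiction, following the pattern of Proposition~\ref{pro26012502}. Suppose $\bu_{m,0}$ is not $\bv$-free for some $m\geq1$, $n\geq 2$ and $\bv=\sum_{1\leq i<j\leq n+1}x_ix_j\bw_{ij}\in\Theta_n$. Then there are terms $\bp,\br$ and a substitution $\varphi$ with
\[
\bp\varphi(\bv)+\br=\bu_{m,0}.
\]
The key structural fact about $\bv$ is that it contains the $n+1\geq 3$ pairwise products $x_1x_2, x_1x_3, x_2x_3$ as subterms of distinct words. The aim is to show that the $\varphi$-images of three such words produce either a square subterm or a ``triangle'' $\bt_1\bt_2+\bt_2\bt_3+\bt_3\bt_1$ inside $\bu_{m,0}$. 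The first is forbidden by Lemma~\ref{lem26011801}(d), the second by Lemma~\ref{lem26011801}(j).

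First, we reproduce Claim~\ref{claim63401} verbatim. Distinct words of $\bv$ have distinct images under $\bp\varphi(\cdot)$. The argument used only Lemma~\ref{lem26011801}(d),(e), which hold for every $\bu_{n,k}$, so it transfers unchanged. We also need that the words of $\bv$ have pairwise incomparable contents: if two words of $\bv$ had comparable contents, their images would be comparable words in $\bu_{m,0}$, contradicting (e) (for $m\geq2$). When $m=1$, $\bu_{1,0}=x_1+x_1y_1x_2$ has only two words. Since every word of $\bv$ has length $\geq2$ and contains a pair $x_ix_j$, distinctness forces at least three distinct words in the image, which is impossible. So we may assume $m\geq2$.

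Second, we reduce to $\bp$ empty, or at least neutralise $\bp$. If $\bp$ is nonempty, then every word of $\bp\varphi(\bv)$ has length $\geq3$. In $\bu_{m,0}$ the words of length $\geq3$ are $x_1\cdots x_m$ (when $m\geq3$) and the $m$ words $x_iy_ix_{m+1}$. By Lemma~\ref{lem26011801}(c), any two image words sharing a length-$2$ subword must coincide. Together with Claim~\ref{claim63401}, we should obtain that $\bp$ multiplied against the images of $x_1x_2\bw_{12}$, $x_1x_3\bw_{13}$, $x_2x_3\bw_{23}$ gives three distinct words of $\bu_{m,0}$, all containing the variable(s) of $\bp$. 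By Lemma~\ref{lem26011801}(b), the contents of any two distinct words meet in a singleton, and $\bp$ is a common factor. So $\bp$ is a single variable $z$ lying in three distinct words, and $\varphi(x_1),\varphi(x_2),\varphi(x_3)$ are pairwise content-disjoint and avoid $z$ (by (b) and (d)). Inspecting $\bu_{m,0}$, a variable common to three words must be $x_{m+1}$ (each $x_i$ lies in only two words, and each $y_i$ in one). The three words are then of the form $x_iy_ix_{m+1}$. But the image of $x_1x_2\bw_{12}$ and that of $x_1x_3\bw_{13}$ both contain $c(\varphi(x_1))\cup\{z\}$, so their intersection has more than one element, contradicting (b). Hence $\bp$ is empty.

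Third, with $\varphi(\bv)\subseteq\bu_{m,0}$, the images of the three words above lie in $\bu_{m,0}$. The images of $x_1x_2$ and $x_1x_3$ share the nonempty factor $\varphi(x_1)$. By (b) and Claim~\ref{claim63401}, each $\varphi(x_i)$ for $i=1,2,3$ is a single variable $a_i$, since its content lies in the singleton intersection of two distinct words. So there are three distinct words of $\bu_{m,0}$, pairwise meeting exactly in $\{a_1\},\{a_2\},\{a_3\}$ respectively. In $\bu_{m,0}$, any two of $x_iy_ix_{m+1}$ meet in $x_{m+1}$, and $x_1\cdots x_m$ meets $x_iy_ix_{m+1}$ in $x_i$. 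A triangle of three distinct words with three distinct pairwise intersections is therefore impossible: among any three words, either all three are of type $x_iy_ix_{m+1}$, sharing $x_{m+1}$ so that $a_1=a_2=a_3$, which is a square contradiction by (d); or two of them are of that type and share $x_{m+1}$ while also each meeting $x_1\cdots x_m$. In the latter case the forced labels make some $a_i$ appear twice in one word, again contradicting (d) or linearity. This is essentially Lemma~\ref{lem26011801}(j). The step I expect to be delicate is the second one, handling a nonempty $\bp$ cleanly; there I would lean on the combinatorics of (b) and (c) as sketched.
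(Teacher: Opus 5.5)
\textbf{There is a genuine gap, and the statement cannot be proved as written.}

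\emph{The key structural fact is false.} You assume that $x_1x_2$, $x_1x_3$, $x_2x_3$ divide \emph{distinct} words of $\bv$. But the summands $x_ix_j\bw_{ij}$ for different pairs may coincide. Already in $\bu_{3,3}=x_1x_2x_3+x_1x_4+x_2x_4+x_3x_4\in\Theta_3$ all three pairs lie in one word. In the extreme case, take every $\bw_{ij}$ to be the product of the remaining variables. Then $\bv=x_1x_2\cdots x_{n+1}=\bq_n$ is a single word of $\Theta_n$, and the conclusion fails:
\begin{itemize}
\item For $n=2$, the substitution $x_1\mapsto x_1$, $x_2\mapsto y_1$, $x_3\mapsto x_{m+1}$ sends $\bv$ to the word $x_1y_1x_{m+1}$ of $\bu_{m,0}$, for every $m\geq1$.
\item For any $n\geq2$ and $m\geq n+1$, the substitution $x_t\mapsto x_t$ ($t\leq n$), $x_{n+1}\mapsto x_{n+1}\cdots x_m$ sends $\bq_n$ onto $x_1\cdots x_m$.
\end{itemize}
So you must add the hypothesis that $\bv$ has at least two words, i.e.\ $\bv\neq\bq_n$. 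This costs nothing downstream: $\delta_{n,\bq_n}$ is the trivial inequality $\bq_n\preceq\bq_n$ and can be dropped from the basis in Proposition~\ref{pro63401}. Your treatment of $m=1$ and your elimination of $\bp$ both break on exactly this term.

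The paper's own proof has the same blind spot. Claims~\ref{claimqj04} and~\ref{claimqj05} need two distinct words of $\bv$. The normal form after Claim~\ref{claimqj02} is stated only for $3\leq k\leq n$, whereas the single-word case is $k=n+1$.

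\emph{Even with that hypothesis, your closing triangle step fails.} For $m\geq3$ and $i\neq j$, the words $x_1\cdots x_m$, $x_iy_ix_{m+1}$, $x_jy_jx_{m+1}$ of $\bu_{m,0}$ meet pairwise in the distinct singletons $\{x_i\}$, $\{x_j\}$, $\{x_{m+1}\}$. So $a_1=x_i$, $a_2=x_j$, $a_3=x_{m+1}$ is a consistent labelling, and no $a_i$ is forced to appear twice. Lemma~\ref{lem26011801}(j) does not rescue this. It concerns subterms $\bt_1\bt_2+\bt_2\bt_3+\bt_3\bt_1$ with a common multiplier and no cofactors, so it only covers the case $\bw_{12}=\bw_{13}=\bw_{23}=\varepsilon$ (Case~1 of Claim~\ref{claimqj02}).

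To exclude the configuration above you need the cofactors and a further index:
\begin{itemize}
\item There $\varphi(\bw_{13})=y_i$, so $\bw_{13}$ is a single variable $x_s$ with $\varphi(x_s)=y_i$, and $s\geq4$ because $\varphi(x_2)=x_j\neq y_i$.
\item The pair $x_2x_s$ divides some word of $\bv$, so $x_jy_i$ would have to divide a word of $\bu_{m,0}$, which is impossible.
\end{itemize}
The paper organizes exactly this kind of argument, and your triangle step would need to be replaced by it:
\begin{itemize}
\item It shows that $\bv$ has a unique word of length at least $3$; two such words are ruled out via the variables $y_i$ and $y_j$.
\item This reduces $\bv$ to $x_1\cdots x_k+\sum x_ix_j$ with $3\leq k\leq n$.
\item It then uses the extra index $k+1$ to reach a contradiction in both cases, $\varphi(x_1\cdots x_k)=x_1\cdots x_m$ and $\varphi(x_1\cdots x_k)=x_iy_ix_{m+1}$.
\end{itemize}
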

\begin{proof}
Now suppose for contradiction that $\bu_{m, 0}$ is not $\bv$-free for some $\bv\in \Theta_n$ and $n\geq2$.
Then there exist terms $\bp, \br\in P_f(X_c^+)$ and a substitution $\varphi$ such that
\begin{equation}\label{26012101}
\bp\varphi(\bv)+\br=\bu_{m,0},
\end{equation}
where $\bp$ may be the empty word, and $\br$ may be the empty set.

\begin{claim}\label{claimqj01}
For distinct words $x_{i_1}x_{j_1}\bw_{i_1j_1}$ and $x_{i_2}x_{j_2}\bw_{i_2j_2}$ in $\bv$,
we have
\[
\bp\varphi(x_{i_1}x_{j_1}\bw_{i_1j_1})\neq\bp\varphi(x_{i_2}x_{j_2}\bw_{i_2j_2}).
\]
\end{claim}
\begin{proof}[Proof of Claim $\ref{claimqj01}$.]
The argument is identical to the proof of Claim~\ref{claim63401}.
\end{proof}

\begin{claim}\label{claimqj03}
The term $\bp$ is a word.
\end{claim}
\begin{proof}[Proof of Claim $\ref{claimqj03}$.]
Suppose that $\bp_1$ and $\bp_2$ are distinct words in $\bp$.
By \eqref{26012101}, both $\bp_1\varphi(x_1x_2\bw_{12})$ and $\bp_2\varphi(x_1x_2\bw_{12})$ are contained in $\bu_{m,0}$.
Consequently, $\bu_{m,0}$ would contain two distinct words sharing a common subword of length $2$,
which contradicts Lemma~\ref{lem26011801}(c).
\end{proof}

\begin{claim}\label{claimqj04}
$\bp$ is either empty or a single variable.
\end{claim}
\begin{proof}[Proof of Claim $\ref{claimqj04}$.]
By Claim~\ref{claimqj03}, we know that $\bp$ is a word.
If its length exceeds $1$, then by \eqref{26012101} and Lemma~\ref{lem26011801}(g),
for distinct words $x_{i_1}x_{j_1}\bw_{i_1j_1}$ and $x_{i_2}x_{j_2}\bw_{i_2j_2}$ in $\bv$,
we have
\[
\bp\varphi(x_{i_1}x_{j_1}\bw_{i_1j_1})=\bp\varphi(x_{i_2}x_{j_2}\bw_{i_2j_2})=x_1x_2\cdots x_m.
\]
This contradicts Claim~\ref{claimqj01}.
Thus the length of $\bp$ is at most $1$;
therefore $\bp$ is either empty or a single variable.
\end{proof}

\begin{claim}\label{claimqj05}
The word $\bp$ is empty.
\end{claim}
\begin{proof}[Proof of Claim $\ref{claimqj05}$.]
Suppose for contradiction that $\bp$ is nonempty.
By Claim~\ref{claimqj04}, $\bp$ is then a single variable.
For distinct words $x_{i_1}x_{j_1}\bw_{i_1j_1}$ and $x_{i_1}x_{j_2}\bw_{i_1j_2}$ in $\bv$,
Claim~\ref{claimqj01} tells us that
\[
\bp\varphi(x_{i_1}x_{j_1}\bw_{i_1j_1})\neq\bp\varphi(x_{i_1}x_{j_2}\bw_{i_1j_2}).
\]

Observe that $\bp\varphi(x_{i_1}x_{j_1}\bw_{i_1j_1})$ and $\bp\varphi(x_{i_1}x_{j_2}\bw_{i_1j_2})$ share a common subword of length $2$,
which contradicts Lemma~\ref{lem26011801}(c).
Thus $\bp$ is empty.
\end{proof}

By Claim~\ref{claimqj05}, the equality~\eqref{26012101} reduces to the form
\begin{equation}\label{26012102}
\varphi(\bv)+\br=\bu_{m, 0}.
\end{equation}

\begin{claim}\label{claimqj02}
The term $\bv$ contains exactly one word of length at least $3$.
\end{claim}
\begin{proof}[Proof of Claim $\ref{claimqj02}$.]
By definition, every word in $\bv$ has length at least $2$.
We argue by contradiction.

\textbf{Case 1.}
All words in $\bv$ have length $2$. Then
$\bw_{ij}=\varepsilon$ for all $1\leq i<j\leq n+1$, and consequently
\[
\varphi(x_1)\varphi(x_2)+\varphi(x_1)\varphi(x_3)+\varphi(x_2)\varphi(x_3)\subseteq \bu_{m,0},
\]
which contradicts Lemma~\ref{lem26011801}(j).

\textbf{Case 2.}
$x_{i_1}x_{j_1}\bw_{i_1j_1}$ and $x_{i_2}x_{j_2}\bw_{i_2j_2}$
are distinct words in $\bv$ with length at least $3$.
Then both $\bw_{i_1j_1}$ and $\bw_{i_2j_2}$ are nonempty.
By Claim~\ref{claimqj01},
$\varphi(x_{i_1}x_{j_1}\bw_{i_1j_1})$ and $\varphi(x_{i_2}x_{j_2}\bw_{i_2j_2})$
cannot be simultaneously equal to $x_1x_2\cdots x_m$, and so
$x_iy_ix_{m+1}\in \varphi(x_{i_1}x_{j_1}\bw_{i_1j_1})$ or $x_iy_ix_{m+1}\in \varphi(x_{i_2}x_{j_2}\bw_{i_2j_2})$ for some $1\leq i\leq m$.
We may assume that $x_iy_ix_{m+1}\in \varphi(x_{i_1}x_{j_1}\bw_{i_1j_1})$.
If $\varphi(x_{i_1}x_{j_1}\bw_{i_1j_1})$ contained another word,
it could not be expressed as a product of three terms; hence it contains no other word.
Consequently,
$\varphi(x_{i_1}x_{j_1}\bw_{i_1j_1})=x_iy_ix_{m+1}$, and so $\bw_{i_1j_1}$ is a variable.
Suppose without loss of generality that
$\varphi(x_{i_1})=x_i$, $\varphi(x_{j_1})=y_i$, and $\varphi(\bw_{i_1j_1})=x_{m+1}$.

If $\varphi(x_{i_2}x_{j_2}\bw_{i_2j_2})=x_1x_2\cdots x_m$,
then there exists $x_s\in c(x_{i_2}x_{j_2}\bw_{i_2j_2})$ such that $\varphi(x_s)\neq x_i$.
Since $x_s\neq x_{j_1}$, it follows that $x_sx_{j_1}$ is a subterm of $\bv$.
This implies that $\varphi(x_sx_{j_1})$ is a subterm of $\bu_{m,0}$,
and $x_ry_i$ is a subterm of $\varphi(x_sx_{j_1})$ for some $1\leq r\leq m$, $r\neq i$.
Thus $x_ry_i$ is a subterm of $\bu_{m,0}$, a contradiction.

If $\varphi(x_{i_2}x_{j_2}\bw_{i_2j_2})\neq x_1x_2\cdots x_m$, then $\varphi(x_{i_2}x_{j_2}\bw_{i_2j_2})=x_jy_jx_{m+1}$
for some $1\leq j\leq m$ and $i\neq j$.
Consequently, $y_iy_j$ would be a subword of $\varphi(x_{j_1}x_{i_2})$, $\varphi(x_{j_1}x_{j_2})$, or $\varphi(x_{j_1}\bw_{i_2j_2})$.
Therefore, $y_iy_j\leq \bu_{m, 0}$, which is impossible.
\end{proof}

Now we complete the main proof of the proposition.
By  Lemma~\ref{lem26011801}(e), \eqref{26012102} and Claim~\ref{claimqj02}, we may write
\[
\bv=\prod_{i=1}^k x_i+\sum_{1\leq i\leq k, k+1\leq j\leq n+1}x_ix_j+\sum_{k+1\leq i<j\leq n+1}x_ix_j,
\]
where $3\leq k\leq n$.

If $\varphi(x_1x_2\cdots x_k)=x_1x_2\cdots x_m$, then Claim~\ref{claimqj01} tells us that
$x_iy_ix_{m+1}$ is a word in $\varphi(x_1x_{k+1})$ for some $1\leq i\leq m$.
This implies that $\varphi(x_1)=x_i$, and so
\[
c(\varphi(x_2\cdots x_k))\subseteq\{x_1,\cdots, x_{i-1}, x_{i+1},\cdots, x_m\}.
\]
Moreover, $y_ix_{m+1}$ is a word in $\varphi(x_{k+1})$.
It follows that there exists $j$ $(1\leq j\leq m, j\neq i)$ such that
$x_jy_ix_{m+1}\leq\varphi(x_2x_{k+1})$, and so
$x_jy_ix_{m+1} \leq \bu_{m, 0}$, a contradiction.

If $\varphi(x_1x_2\cdots x_k)\neq x_1x_2\cdots x_m$,
then $\varphi(x_1x_2\cdots x_k)=x_iy_ix_{m+1}$ for some $1\leq i\leq m$, which forces $k=3$.
By Claim~\ref{claimqj01}, we have $\varphi(x_1x_4)\neq\varphi(x_2x_4)$.
Hence, for some $j$ $(1\leq j\leq m, j\neq i)$, $x_jy_jx_{m+1}$ lies in either $\varphi(x_1x_4)$ or $\varphi(x_2x_4)$.
Assume without loss of generality that $x_jy_jx_{m+1}$ is a word in $\varphi(x_1x_4)$.
This yields $\varphi(x_1)=x_{m+1}$ and $x_jy_j$ is a word in $\varphi(x_4)$.
Therefore, either $x_ix_jy_j$ or $y_ix_jy_j$ must be a word in $\varphi(x_2x_4)$.
Thus either $x_ix_jy_j$ or $y_ix_jy_j$ would be a word in $\bu_{m, 0}$, which is impossible.

This completes the proof.
\end{proof}

\begin{thm}\label{thm01}
The interval $[\mathsf{V}(S_{(4, 545)}), \mathsf{V}(S_{(4, 634)})]$ contains $2^{\aleph_0}$ distinct varieties.
\end{thm}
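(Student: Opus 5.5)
The plan is to build, for each subset $I$ of an infinite index set, a variety $\mathcal{V}_I$ inside the interval, and to show that $I\mapsto\mathcal{V}_I$ is injective. Since Proposition~\ref{lem05} gives $\mathsf{V}(S_{(4,545)})$ inside $\mathsf{V}(S_{(4,634)})$ by the inequality \eqref{54504} together with the $\sigma_{n,k}$ for $0\leq k<n$, the natural family to use is $\Omega_0=\{\sigma_{n,0}\}$. For $I\subseteq\{n\mid n\geq 4\}$, let $\mathcal{V}_I$ be the subvariety of $\mathsf{V}(S_{(4,634)})$ defined by $\{\sigma_{n,0}\mid n\in I\}$. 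Each $\sigma_{n,0}$ holds in $S_{(4,545)}$ by Proposition~\ref{pro54501}, so $\mathcal{V}_I$ lies in the interval. There are $2^{\aleph_0}$ choices of $I$, so it remains to show that $I\neq J$ gives $\mathcal{V}_I\neq\mathcal{V}_J$.

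Suppose $m\in J\setminus I$. I will show that $\sigma_{m,0}$ is not derivable from $\Xi\cup\{\sigma_{n,0}\mid n\in I\}$, where $\Xi$ is the basis of $\mathsf{V}(S_{(4,634)})$ from Proposition~\ref{pro63401}. By Lemma~\ref{lem02}, in the spirit of the proof of Theorem~\ref{thm26012401} and Lemma~\ref{lem26012701}, it suffices to show that $\bu_{m,0}$ is $\bt$-free for every upper side $\bt$ of an inequality in this set. Then no derivation step can be applied inside $\bu_{m,0}$ in the upward direction, so $\bu_{m,0}$ cannot be rewritten toward $\bu_{m,0}+\bq_m$. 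I will check the upper sides in three groups.
\begin{enumerate}
\item For the upper sides of \eqref{54501}, \eqref{54503}, \eqref{63404}, \eqref{63406} and \eqref{63407}, each contains a square of a variable. Lemma~\ref{lem26011801}(d) then gives $\bt$-freeness, using the remark that in \eqref{63407} with $k\geq1$ there is an $x_i^2$ summand.
\item For the upper sides $\bv\in\Theta_n$ of \eqref{63405}, freeness is exactly Proposition~\ref{lem06}.
\item For $\sigma_{n,0}$ with $n\in I$, so that $n\neq m$, freeness is Proposition~\ref{pro26012501} with $k=0$, which allows all $n\neq m$ when $m\geq4$.
\end{enumerate}

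This shows $\sigma_{m,0}$ holds in $\mathcal{V}_J$ but not in $\mathcal{V}_I$, so the map $I\mapsto\mathcal{V}_I$ is injective. I expect the main obstacle to be the precise derivation argument. One must verify that in a derivation $\bu_{m,0}+\bq_m=\bt_1,\dots$ (or the inequality form $\bu_{m,0}\approx\bu_{m,0}+\bq_m$), starting from $\bu_{m,0}$ every step replacing a lower side by an upper side is impossible. One must also verify that steps going from upper side to lower side never arise, since a lower side $\bs$ here is always a single word and the upper side contains it under the relevant inequality reading. The cleanest way is to recast each inequality $\bs\preceq\bt$ as the identity $\bt\approx\bt+\bs$, and to note that applying it to $\bu_{m,0}$ in either direction needs $\varphi(\bt)$ to be a subterm of $\bu_{m,0}$. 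The direction $\bt+\bs\to\bt$ also needs $\varphi(\bt)\leq\bu_{m,0}$. Hence $\bu_{m,0}$ is an isolated point and $\bu_{m,0}+\bq_m\neq\bu_{m,0}$ is not reachable. This is the same mechanism already used in Theorems~\ref{thm26012401} and \ref{thm26012501}, so I would cite that reasoning rather than redo it.
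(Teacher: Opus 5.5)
Your proposal is correct and follows essentially the same route as the paper. Both arguments use the family of subvarieties of $\mathsf{V}(S_{(4,634)})$ defined by sets of inequalities $\sigma_{n,0}$, and separate them by showing that $\bu_{m,0}$ is free for every upper side in the basis of Proposition~\ref{pro63401} (via Lemma~\ref{lem26011801}(d) and Proposition~\ref{lem06}) and for the remaining $\bu_{n,0}$ (via Proposition~\ref{pro26012501}). Your restriction of the index set to $\{n\mid n\geq 4\}$ is a small improvement: Proposition~\ref{pro26012501} is only stated for $m\geq 4$, whereas the paper indexes over all of $\mathbb{N}$, so its proof does not formally cover $p\in\{1,2,3\}$.
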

\begin{proof}
Let $\mathbb{N}$ denote the set of all positive integers.
For each subset $M$ of $\mathbb{N}$,
let $\mathcal{V}_M$ denote the subvariety of $\mathsf{V}(S_{(4, 634)})$
defined by the identities $\sigma_{m, 0}$ with $m\in M$.
Since every inequality of the form $\sigma_{m, 0}$ holds in $S_{(4, 545)}$,
each $\mathcal{V}_M$ lies in the interval $[\mathsf{V}(S_{(4, 545)}), \mathsf{V}(S_{(4, 634)})]$.

Now take two subsets $P$ and $Q$ of $\mathbb{N}$.
We claim that $\mathcal{V}_P=\mathcal{V}_Q$ if and only if $P=Q$.
Suppose that $P\neq Q$; without loss of generality let $p \in P \setminus Q$.
It is evident that $\mathcal{V}_P$ satisfies $\sigma_{p, 0}$.
To see that $\mathcal{V}_Q$ does not satisfy $\sigma_{p,0}$, recall that $\mathcal{V}_Q$ is defined by
the identities \eqref{54501}, \eqref{54503}, \eqref{63404}--\eqref{63407} together with
all $\sigma_{q,0}\;(q \in Q)$.
By Lemma~\ref{lem26012701} together with Lemma~\ref{lem02} and
Propositions~\ref{pro26012501}, \ref{pro63401} and \ref{lem06},
the upper side of $\sigma_{p,0}$
is free for the upper side of each of the defining inequalities of $\mathcal{V}_Q$.
Consequently, $\sigma_{p,0}$ cannot hold in $\mathcal{V}_Q$.
Therefore, $\mathcal{V}_P \neq \mathcal{V}_Q$.

Thus $\{\mathcal{V}_N \mid N\subseteq \mathbb{N}\}$ has the same cardinality as the power set of $\mathbb{N}$, and so
the interval $[\mathsf{V}(S_{(4, 545)}), \mathsf{V}(S_{(4, 634)})]$ contains $2^{\aleph_0}$ distinct varieties.
\end{proof}

\begin{cor}
There are $2^{\aleph_0}$ distinct nonfinitely based varieties in the interval $[\mathsf{V}(S_{(4, 545)}), \mathsf{V}(S_{(4, 634)})]$.
\end{cor}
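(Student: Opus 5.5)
We will reuse the family $\{\mathcal{V}_M \mid M\subseteq\mathbb{N}\}$ from the proof of Theorem~\ref{thm01}. The plan is to show that $\mathcal{V}_M$ is nonfinitely based whenever $M$ is infinite. Since $\mathbb{N}$ has $2^{\aleph_0}$ infinite subsets and the map $M\mapsto\mathcal{V}_M$ is injective by Theorem~\ref{thm01}, this yields $2^{\aleph_0}$ distinct nonfinitely based varieties in the interval.

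Fix an infinite $M\subseteq\mathbb{N}$. By Proposition~\ref{pro63401}, an equational basis of $\mathcal{V}_M$ is
\[
\Sigma_M=\{\eqref{54501},\eqref{54503},\eqref{63404},\eqref{63406},\eqref{63407}\}\cup\{\delta_{n,\bv}\mid n\geq 2,\ \bv\in\Theta_n\}\cup\{\sigma_{m,0}\mid m\in M\}.
\]
This set contains the infinite subset $\{\sigma_{m,0}\mid m\in M\}$ of $\Omega_0$, and $\mathcal{V}_M$ is commutative. We would like to apply Theorem~\ref{thm26012401}; we need a variant, since its statement speaks of ``an equational basis for $S$'' and $\mathcal{V}_M$ need not be generated by a single known algebra. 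The proof of Theorem~\ref{thm26012401} uses only the compactness theorem (Lemma~\ref{el26012701}), which is stated for arbitrary varieties, together with Lemmas~\ref{lem02}, \ref{lem26012701} and Proposition~\ref{pro26012501}. It therefore applies verbatim to $\mathcal{V}_M$. (Alternatively, $\mathcal{V}_M$ is generated by some algebra, for instance its free algebra on countably many generators, and we can take $S$ to be that algebra.)

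It then remains to check the hypothesis of Theorem~\ref{thm26012401}: for every $m\geq 4$, the term $\bu_{m,0}$ must be $\bt$-free for every upper side $\bt$ of an inequality in $\Sigma_M\setminus\Omega_0$. There are two cases.

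\textbf{Upper sides from \eqref{54501}, \eqref{54503}, \eqref{63404}, \eqref{63406}, \eqref{63407}.} Each of these upper sides contains the square of a variable (namely $x^2$, $x^2$, $x^2+y$, $x_i^2\bw_i$, and, when $k\geq1$, $x_1^2\bw_1$) as a subterm. For \eqref{54501} both sides contain a square. Lemma~\ref{lem26011801}(d) says that $\bu_{m,0}$ has no subterm that is a square, and a substitution instance of a term with a square subterm again has a square subterm. Combining this with Lemma~\ref{lem26012701} gives freeness.

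\textbf{Upper sides $\bv\in\Theta_n$.} These are handled directly by Proposition~\ref{lem06}.

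The main point requiring care is the derivability step inside Theorem~\ref{thm26012401}: we must know that $\sigma_{m,0}$ cannot be derived from a finite subset $\Sigma'$ of $\Sigma_M$ that contains only $\sigma_{n,0}$ with $n<m$. In any derivation of $\bq_m\preceq\bu_{m,0}$, that is, of $\bq_m+\bu_{m,0}\approx\bu_{m,0}$ via Lemma~\ref{lem02}, the first step applied to $\bu_{m,0}$ must locate a substitution instance of one side of some identity in $\Sigma'$ inside $\bu_{m,0}$. Freeness rules this out for upper sides. The lower sides need a separate argument: each lower side $\bs$ of an inequality $\bs\preceq\bt$ rewrites as the identity $\bs+\bt\approx\bt$, so any instance located in $\bu_{m,0}$ would contain an instance of $\bt$, and freeness excludes this as well. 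This is exactly the argument the paper already relies on for Theorem~\ref{thm01}, so we may cite it. With the hypothesis verified, Theorem~\ref{thm26012401} shows that each $\mathcal{V}_M$ with $M$ infinite is nonfinitely based, which proves the corollary.
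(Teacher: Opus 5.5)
Your proposal is correct and follows the paper's proof essentially verbatim. Like the paper, you restrict to infinite $M$, take the basis from Proposition~\ref{pro63401} together with $\{\sigma_{m,0}\mid m\in M\}$, and verify the freeness hypothesis of Theorem~\ref{thm26012401} using Lemma~\ref{lem26011801}(d) for the square-containing upper sides and Proposition~\ref{lem06} for the terms in $\Theta_n$. Your additional remarks only make explicit what the paper leaves implicit: that Theorem~\ref{thm26012401} applies to the variety $\mathcal{V}_M$ itself (for instance via its countably generated free algebra), that lower sides are handled through Lemma~\ref{lem26012701}, and that $\mathbb{N}$ has $2^{\aleph_0}$ infinite subsets.
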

\begin{proof}
From the proof of Theorem~\ref{thm01},
it is enough to show that for any infinite subset $M$ of $\mathbb{N}$,
the subvariety $\mathcal{V}_M$ of $\mathsf{V}(S_{(4,634)})$ defined by the identities $\sigma_{m,0}$ with $m \in M$
is nonfinitely based.
Indeed, by Proposition~\ref{pro63401},
\[
\Sigma=\{\eqref{54501}, \eqref{54503}, \eqref{63404}, \eqref{63405}, \eqref{63406}, \eqref{63407}\}
\cup\{\sigma_{m,0} \mid m \in M\}
\]
is an equational basis for the commutative ai-semiring variety $\mathcal{V}_M$, 
which contains an infinite subset of $\Omega_0$.
According to Theorem~\ref{thm26012401}, it suffices to prove that 
$\bu_{m, 0}$ is $\bt$-free for every inequality $\bs \preceq \bt$
in $\Sigma \setminus \Omega_0$ and for every $m\geq 4$. 
Observe that 
\[
\Sigma \setminus \Omega_0=\{\eqref{54501}, \eqref{54503}, \eqref{63404}, \eqref{63405}, \eqref{63406}, \eqref{63407}\}.
\]
The required freeness follows from Lemma~\ref{lem26011801}(d) for $\mathbf{t} \in\Gamma$, where
\[
\Gamma=\{\bw \mid \bw ~\text{is an upper side of an inequality in}~\{\eqref{54501}, \eqref{54503},\eqref{63404},\eqref{63406},\eqref{63407}\}\}
\]
and from Proposition~\ref{lem06} for $\mathbf{t}\in \{\bv \mid \bv\in \Theta_n, n\geq 2\}$.
\end{proof}

\begin{cor}\label{cor02}
The interval $[\mathsf{V}(S_{53}), \mathsf{V}(S_{53}^0)]$ contains $2^{\aleph_0}$ distinct varieties.
\end{cor}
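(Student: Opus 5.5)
The plan is to deduce Corollary~\ref{cor02} directly from Theorem~\ref{thm01} by comparing endpoints of the two intervals. The interval $[\mathsf{V}(S_{(4,545)}), \mathsf{V}(S_{(4,634)})]$ is a subset of $[\mathsf{V}(S_{53}), \mathsf{V}(S_{53}^0)]$. The reason is that each lattice interval $[\mathcal{A},\mathcal{B}]$ shrinks when $\mathcal{A}$ is enlarged or $\mathcal{B}$ is reduced, so it suffices to check two inclusions:
\[
\mathsf{V}(S_{53})\subseteq \mathsf{V}(S_{(4,545)})
\quad\text{and}\quad
\mathsf{V}(S_{(4,634)})\subseteq \mathsf{V}(S_{53}^0).
\]

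For the upper endpoint we use the observation from the introduction that $S_{(4,634)}$ is isomorphic to $S_{53}^0$. Hence $\mathsf{V}(S_{(4,634)})=\mathsf{V}(S_{53}^0)$, and the required inclusion holds with equality.

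For the lower endpoint we apply Proposition~\ref{pro26011401}, which gives $\mathsf{V}(S_{(4,545)})=\mathsf{V}(S_{53},D_2)\supseteq \mathsf{V}(S_{53})$. An alternative route is to note that $S_{(4,545)}$ is a subdirect product of $S_{43}$ and $S_{53}$, so $S_{53}$ is a homomorphic image of $S_{(4,545)}$.

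Consequently, every variety $\mathcal{V}$ with $\mathsf{V}(S_{(4,545)})\subseteq\mathcal{V}\subseteq\mathsf{V}(S_{(4,634)})$ also satisfies $\mathsf{V}(S_{53})\subseteq\mathcal{V}\subseteq\mathsf{V}(S_{53}^0)$. By Theorem~\ref{thm01} the first interval contains $2^{\aleph_0}$ distinct varieties, so the second contains at least that many. For the upper bound, the vocabulary is countable, so there are only countably many identities and at most $2^{\aleph_0}$ varieties of ai-semirings. The cardinality is therefore exactly $2^{\aleph_0}$.

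There is no real obstacle here. The only point that needs care is the direction of the endpoint inclusions, and both are supplied by earlier results.
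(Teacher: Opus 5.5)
Your proposal is correct and follows the paper's proof: both arguments use $S_{53}\in\mathsf{V}(S_{(4,545)})$ and $S_{(4,634)}\cong S_{53}^0$ to nest the interval of Theorem~\ref{thm01} inside $[\mathsf{V}(S_{53}),\mathsf{V}(S_{53}^0)]$, which gives the $2^{\aleph_0}$ lower bound. Your extra remark that there are at most $2^{\aleph_0}$ varieties, since there are only countably many identities, is a valid addition that makes the count exact.
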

\begin{proof}
Since $S_{53}^0$ is isomorphic to $S_{(4, 634)}$, and $S_{53}$ belongs to $\mathsf{V}(S_{(4, 545)})$,
it follows that
the interval $[\mathsf{V}(S_{(4, 545)}), \mathsf{V}(S_{(4, 634)})]$
is contained in $[\mathsf{V}(S_{53}), \mathsf{V}(S_{53}^0)]$.
Theorem~\ref{thm01} shows that the former already contains \(2^{\aleph_0}\) varieties,
hence the latter must also contain \(2^{\aleph_0}\) distinct varieties.
\end{proof}

\section{Conclusion}
In this paper, we have provided two new sufficient conditions for an ai-semiring to be nonfinitely based,
and applied them to show that the $4$-element ai-semirings $S_{(4, 545)}$ and $S_{(4, 634)}$ have no finite equational basis.
As a consequence, the variety $\mathsf{V}(S_{(4, 545)})$ is the first known example of a nonfinitely based ai-semiring variety
that can be written as the join of two finitely based varieties.

Both $\mathsf{V}(S_{(4, 545)})$ and $\mathsf{V}(S_{(4, 634)})$ contain the $3$-element ai-semiring $S_7$;
hence the solution of their finite basis problem provides direct support
for the affirmative side of the question raised by Gao et al.~\cite{gjrz2},
which asked whether every finite ai-semiring whose variety contains $S_7$ is nonfinitely based.

Given an ai-semiring $S$ whose additive top element $\infty$ is also a multiplicative zero,
one can adjoin a new element $e$ and define
\[
e+a = a+e = e+e = ee = e, \quad ea = ae = \infty \quad (a \in S \setminus\{e\}).
\]
The resulting algebra $(S \cup\{e\},+,\cdot)$, called the \emph{idempotent extension} of $S$ and denoted by $S^e$,
is isomorphic to a subdirect product of $S$ and $D_2$;
hence $\mathsf{V}(S^e)=\mathsf{V}(S,D_2)$.
In particular, $\mathsf{V}(S_{53}^e)=\mathsf{V}(S_{53},D_2)$.
By Proposition~\ref{pro26011401}, this variety coincides with
$\mathsf{V}(S_{(4,545)})$. Therefore, $S_{53}^e$ is also nonfinitely based,
showing that the idempotent extension construction does not preserve the finite basis property.

On the other hand, $S_{(4, 634)}$ is isomorphic to $S_{53}^0$; thus $S_{53}^0$ is nonfinitely based as well.
This yields the smallest example of a finitely based ai-semiring $S$
whose extension $S^0$ is nonfinitely based.

Combined with earlier results obtained by members of our group,
the present paper advances the finite basis problem for all $4$-element ai-semirings
whose additive reducts are chains within reach.
A complete solution of the finite basis problem for this subclass will be presented in a forthcoming work \cite{ryy}.

Finally, we expect that the syntactic methods developed here, as well as the explicit sufficient conditions obtained,
will find further applications in the study of finite basis questions for other classes of ai-semirings.

\end{document}